\documentclass[11pt,a4paper]{article}

\usepackage[T1]{fontenc}
\usepackage[utf8]{inputenc}
\usepackage{lmodern}
\usepackage[margin=25mm]{geometry}
\usepackage{amsmath,amsthm,amsfonts,amssymb}
\usepackage{booktabs}
\usepackage{microtype}
\usepackage{authblk}
\usepackage{cite}
\usepackage[hidelinks]{hyperref}

\numberwithin{equation}{section}
\newtheorem{theorem}{Theorem}[section]
\newtheorem{lemma}[theorem]{Lemma}
\newtheorem{corollary}[theorem]{Corollary}
\theoremstyle{definition}
\newtheorem{assumption}[theorem]{Assumption}
\theoremstyle{remark}
\newtheorem{remark}[theorem]{Remark}

\allowdisplaybreaks
\newcommand{\norm}[1]{\lVert #1\rVert}
\newcommand{\seminorm}[1]{\lvert #1\rvert}

\title{Sharp Pointwise Error Estimates of Finite Element Methods
for the Biharmonic Equation}
\author[1]{Buyang Li}
\author[2]{Zhenglei Li}
\author[2]{Qigang Liang}
\author[2]{Xuejun Xu\thanks{Corresponding author: \texttt{xuxj@tongji.edu.cn}.}}
\affil[1]{Department of Applied Mathematics, The Hong Kong Polytechnic University,\\
Hong Kong, China\\
\texttt{buyang.li@polyu.edu.hk}}
\affil[2]{School of Mathematical Sciences, Tongji University, Shanghai 200092, China;\\
and Key Laboratory of Intelligent Computing and Applications\\
(Tongji University), Ministry of Education\\
\texttt{2211186@tongji.edu.cn}, \texttt{qigang\_liang@tongji.edu.cn},
\texttt{xuxj@tongji.edu.cn}}
\date{}
\hypersetup{
  pdftitle={Sharp Pointwise Error Estimates of Finite Element Methods for the Biharmonic Equation},
  pdfauthor={Buyang Li, Zhenglei Li, Qigang Liang, Xuejun Xu},
  pdfkeywords={biharmonic equation, finite element method, pointwise error estimate}
}

\begin{document}
\maketitle
\begin{abstract}
We prove sharp pointwise error estimates for finite element approximations of
the clamped biharmonic equation on convex polygonal domains.  For
\(C^1\)-conforming elements, we establish \(W^{2,p}\)-stability for all
\(2\le p\le \infty\), together with best-approximation type estimates in
\(W^{1,\infty}\) and \(L^\infty\).  For the nonconforming Morley element, we
derive optimal estimates in \(W^{2,p}\), \(W^{1,\infty}\), and \(L^\infty\);
in particular, the \(L^\infty\)-error is of order \(O(h^2)\) without an
additional logarithmic factor.  The analysis is based on three ingredients:
local energy estimates for conforming and nonconforming plate elements, a
removal of the local \(L^2\)-error term in these estimates, and H\"older
estimates for derivatives of the Green's function.  These tools are combined
with a dyadic decomposition argument for regularized Green's functions.  The
results remove logarithmic losses present in earlier pointwise estimates for
fourth-order problems and yield sharp rates for classical plate elements such
as the Argyris, Bogner--Fox--Schmit (BFS),  Hsieh--Clough--Tocher (HCT), and Morley elements.
\end{abstract}

\medskip\noindent\textbf{Keywords.}
biharmonic equation, finite element method, pointwise error estimate, local energy error estimate, regularized Green's functions
\par

\medskip\noindent\textbf{Mathematics Subject Classification.}
65N30, 65N15, 65N12, 35J40
\par\medskip

\setlength\abovedisplayskip{5pt}
\setlength\belowdisplayskip{5pt}

\section{Introduction}

Let \(\Omega\subset\mathbb{R}^2\) be a bounded convex polygon.  We consider the
clamped biharmonic problem
\begin{align}\label{definition model problem}
	\begin{cases}
		\Delta^2 u=f & \text{in } \Omega,\\
		u=\dfrac{\partial u}{\partial n}=0 & \text{on } \partial\Omega,
	\end{cases}
\end{align}
where \(f\in L^2(\Omega)\) and \(n\) denotes the outward unit normal vector on
\(\partial\Omega\).  This is the standard clamped Kirchhoff--Love plate model. Its finite element approximation is
well understood in energy norms, but sharp pointwise estimates are considerably
more delicate, especially on polygonal domains where corner singularities limit
global regularity.

For second-order elliptic problems, maximum-norm and pointwise finite element
estimates have a long history.  Interior estimates and maximum-norm estimates
were developed in the seminal works of Nitsche--Schatz, Bramble--Nitsche--Schatz,
and Schatz--Wahlbin \cite{NitscheSchatz1974,BrambleNitscheSchatz1975,
SchatzWahlbin1977,SchatzWahlbin1978}.  Global \(L^\infty\)-type estimates on
irregular meshes were obtained by Scott \cite{Scott}, and the removal of
unnecessary logarithmic factors for Ritz projections was achieved in the
work of Rannacher and Scott \cite{MR645661}.  These ideas have continued to
play an important role in modern pointwise finite element analysis, including
local energy techniques \cite{MR2728969}, maximum-norm stability on locally
refined meshes \cite{MR4435940,MR4050547}, pointwise estimates for parabolic
problems \cite{MR3498514}, and recent pointwise gradient estimates for Ritz
projections \cite{MR4748207}.

The fourth-order case is much less complete.  Compared with second-order
problems, the biharmonic equation involves stronger boundary conditions,
\(C^1\)-conforming or nonconforming discretizations, and Green's functions with
more singular higher derivatives.  Early pointwise results for plate problems
include Rannacher's estimates for mixed and nonconforming methods
\cite{MR555385} and Wang's \(L^\infty\)-estimates for conforming and
nonconforming finite elements \cite{MR1252774,MR1229633}.  These results are
nearly optimal in several cases, but logarithmic factors remain in the
\(W^{1,\infty}\)- or \(L^\infty\)-bounds.  More recently, Leykekhman
\cite{MR4166452} developed pointwise best-approximation estimates for the
\(C^0\) interior penalty approximation of biharmonic problems on convex
polygons.  That work introduced a powerful approach based on dyadic
decompositions, local energy estimates, and pointwise Green's function bounds.
However, the resulting estimates still contain logarithmic factors, partly
because the local energy analysis retains lower-order error terms that are
then controlled globally.

The goal of this paper is to prove sharp pointwise error estimates for finite
element approximations of \eqref{definition model problem}, both for
\(C^1\)-conforming elements and for nonconforming lower-order elements such as
the Morley element.  The main novelty is not merely the final convergence rates,
but the mechanism by which logarithmic losses are avoided.  Our analysis has
three key ingredients.

First, we prove local energy estimates for a class of conforming and
nonconforming finite element spaces satisfying an abstract superapproximation
assumption.  In contrast with previous local estimates for fourth-order
problems, we eliminate the local \(L^2\)-term from the final bound and retain
only an \(H^1\)-term.  This distinction is crucial: an \(L^2\)-term would require
a duality argument involving \(H^4\)-regularity, which is generally unavailable
on polygonal domains, whereas the \(H^1\)-term can be handled using only
\(H^3\)-type regularity.

Second, we establish H\"older estimates for derivatives of the continuous
Green's function of the clamped biharmonic operator.  These estimates are
derived from the pointwise Green's function bounds of Maz'ya and Rossmann
\cite{MR1143406}.  The positive H\"older exponent obtained in this step is
essential for summing the dyadic estimates without producing logarithmic
growth.

Third, we combine the local energy estimates and the Green's function
H\"older estimates through a dyadic decomposition.  This yields a uniform
\(W^{2,1}\)-estimate for the finite element error of a regularized derivative
Green's function.  This estimate is the core technical result behind the sharp
\(W^{2,p}\), \(W^{1,\infty}\), and \(L^\infty\) finite element bounds.

We now state the main results.  The finite element spaces, the broken bilinear
form, and the discrete solution \(u_h\) are defined precisely in
Section~\ref{sec:fem-setting}. Let $\mu$ denote the largest integer such that every polynomial of degree at most $\mu$ is reproduced exactly by the corresponding (quasi-)interpolation operator.  In particular, all finite element solutions
appearing below are understood as solutions of the numerical scheme
\eqref{eq:discrete-scheme}.  Throughout the paper, \(A\lesssim B\) means
that \(A\leq CB\), where the constant \(C\) is independent of the mesh size
\(h\), and \(A\simeq B\) means \(A\lesssim B\lesssim A\). 
\begin{theorem}[Conforming elements]\label{theorem conforming main results}
Let \(u\in H_0^2(\Omega)\) solve \eqref{definition model problem}, and let \(u_h\in \mathring{V}_h\) be the conforming finite element solution determined by \eqref{eq:discrete-scheme}. Assume that \(V_h\) satisfies Assumption~\ref{assumption}. 

\begin{subequations}
If \(\mu\ge 2\), then
\begin{align}\label{conforming W2,p}
    \seminorm{u_h}_{W^{2,p}(\Omega)}
    \lesssim
    \seminorm{u}_{W^{2,p}(\Omega)},
    \qquad 2\le p\le\infty.
\end{align}

If \(\mu\ge 3\), then
\begin{align}
\label{conforming W1,infty}
    \seminorm{u-u_h}_{W^{1,\infty}(\Omega)}
    &\lesssim
    \inf_{v_h\in\mathring V_h}
    \seminorm{u-v_h}_{W^{1,\infty}(\Omega)}
    +
    h\inf_{v_h\in\mathring V_h}
    \seminorm{u-v_h}_{W^{2,\infty}(\Omega)}, \\
\label{conforming Linfty-log}
    \norm{u-u_h}_{L^\infty(\Omega)}
    &\lesssim
    \inf_{v_h\in\mathring V_h}
    \norm{u-v_h}_{L^\infty(\Omega)}
    +
    h^2|\ln h|
    \inf_{v_h\in\mathring V_h}
    \seminorm{u-v_h}_{W^{2,\infty}(\Omega)}.
\end{align}

If \(\mu\ge 4\), the factor \(|\ln h|\) in the last estimate can be removed, i.e.,
\begin{align}\label{conforming Linfty}
     \norm{u-u_h}_{L^\infty(\Omega)}
    &\lesssim
    \inf_{v_h\in\mathring V_h}
    \norm{u-v_h}_{L^\infty(\Omega)}
    +
    h^2
    \inf_{v_h\in\mathring V_h}
    \seminorm{u-v_h}_{W^{2,\infty}(\Omega)}.
\end{align}
\end{subequations}
\end{theorem}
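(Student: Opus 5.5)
We would argue by duality with regularized Green's functions, following the introduction's three-ingredient strategy. Fix a point \(x_0\) in a triangle \(T_0\). Let \(\delta_h\) be a smooth regularized delta supported in \(T_0\), with \(\norm{\delta_h}_{L^1}\lesssim 1\) and \(\norm{\delta_h}_{L^\infty}\lesssim h^{-2}\), reproducing polynomials on \(T_0\). For a multi-index \(\alpha\) with \(|\alpha|\le 2\), let \(g=g^\alpha\) solve \(\Delta^2 g=(-1)^{|\alpha|}\partial^\alpha\delta_h\) with clamped boundary conditions, and let \(g_h\in\mathring V_h\) be its Galerkin approximation. Galerkin orthogonality gives
\[
\partial^\alpha (u_h-v_h)(x_0)\approx (\partial^\alpha(u_h-v_h),\delta_h)=a(u-v_h,g_h)=a(u-v_h,g)-a(u-v_h,g-g_h)
\]
for any \(v_h\in\mathring V_h\). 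The core step is the uniform bound \(\norm{g^\alpha-g^\alpha_h}_{W^{2,1}(\Omega)}\lesssim 1\) for \(|\alpha|=2\). With this bound, the term \(a(u-v_h,g-g_h)\) is controlled by \(\seminorm{u-v_h}_{W^{2,\infty}}\). The term \(a(u-v_h,g)\) is moved back onto \(u-v_h\) by integrating by parts against \(\Delta^2 g\), which produces \(\partial^\alpha(u-v_h)\) averaged against \(\delta_h\).

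For \eqref{conforming W2,p} at \(p=\infty\), take \(|\alpha|=2\) and \(v_h=I_hu\) with \(\mu\ge2\). This gives \(\seminorm{u_h}_{W^{2,\infty}}\lesssim\seminorm{u}_{W^{2,\infty}}\). At \(p=2\) the estimate is just energy stability, and the range \(2<p<\infty\) follows by interpolation (Riesz--Thorin) of the linear map \(u\mapsto u_h\).

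For \eqref{conforming W1,infty} take \(|\alpha|=1\), and for \eqref{conforming Linfty-log}, \eqref{conforming Linfty} take \(\alpha=0\). Write
\[
a(u-v_h,g-g_h)\le \seminorm{u-v_h}_{W^{2,\infty}}\norm{g-g_h}_{W^{2,1}},
\]
so we need \(\norm{g^\alpha-g^\alpha_h}_{W^{2,1}}\lesssim h^{2-|\alpha|}\), with a factor \(|\ln h|\) allowed when \(\alpha=0\) and \(\mu=3\). We prove these by the dyadic argument. Decompose \(\Omega\) into annuli \(\Omega_j=\{x: d_j/2\le|x-x_0|\le d_j\}\) with \(d_j=2^{-j}\) down to \(d_J\sim Kh\). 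By Cauchy--Schwarz,
\[
\norm{g-g_h}_{W^{2,1}(\Omega_j)}\lesssim d_j\norm{g-g_h}_{H^2(\Omega_j)}.
\]
We then apply the local energy estimate (with the \(L^2\) term removed) on \(\Omega_j\):
\[
\norm{g-g_h}_{H^2(\Omega_j)}\lesssim \inf_{\chi}\norm{g-\chi}_{H^2(\Omega_j')}+d_j^{-1}\norm{g-g_h}_{H^1(\Omega_j')}.
\]
The approximation term is bounded by \(h^{s}d_j^{1-s}\seminorm{g}_{W^{2+s,\infty}(\Omega_j')}\) up to measure factors. Here the pointwise Green's function bounds of Maz'ya--Rossmann give \(\seminorm{g}_{W^{k,\infty}(\Omega_j')}\lesssim d_j^{2-k-|\alpha|}\), and the H\"older estimates cover the fractional order needed near corners. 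Because the local order is \(\min(\mu-1,\cdot)\), these terms sum geometrically in \(j\).

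The \(H^1\) pollution term \(d_j^{-1}\norm{g-g_h}_{H^1(\Omega_j')}\) is handled by a further duality argument. Pair \(g-g_h\) with a test function \(\phi\) supported in \(\Omega_j'\), and let \(w\) solve \(\Delta^2 w=-\nabla\cdot\phi\), which needs only \(H^3\) regularity on convex polygons. Galerkin orthogonality then gives \(a(g-g_h,w-\chi)\). This is split into a near part, handled by the global energy error, and a far part, handled by the H\"older decay of derivatives of the Green's function of \(w\) away from \(\Omega_j'\). The outcome is a bound of the form \(d_j^{-1}\norm{g-g_h}_{H^1(\Omega_j')}\lesssim (h/d_j)^{\gamma}\times(\text{previous terms})+\text{small}\cdot\norm{g-g_h}_{W^{2,1}}\), where \(\gamma>0\) is the H\"older exponent. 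Summing over \(j\), the positive exponent \(\gamma\) makes the series converge, and the last term is absorbed by choosing \(K\) large.

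The innermost ball \(|x-x_0|\le Kh\) is handled by the global energy estimate together with inverse estimates and \(\norm{\delta_h}_{L^\infty}\lesssim h^{-2}\). For \(\alpha=0\), the approximation terms scale like \(h^{\mu-1}d_j^{1-\mu}h^2\). When \(\mu=3\) one order is lost in the sum, leaving \(\sum_j h^2\sim h^2|\ln h|\); when \(\mu\ge4\) the sum is geometric and we get \(h^2\).

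The main obstacle is this pollution term. Every term must gain a positive power of \(h/d_j\) so that no logarithm appears. We would first try to control it using the H\"older exponent of \(\nabla^3 G\) together with the \(H^1\)-only local estimate; the \(H^1\)-only form is essential, because an \(L^2\) term would require \(H^4\) regularity, which fails on polygonal domains.
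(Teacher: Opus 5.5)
Your $p=\infty$ argument and the core bound $\seminorm{g_\alpha-g_{\alpha,h}}_{W^{2,1}}\lesssim1$ for $|\alpha|=2$ match the paper. The step for $2<p<\infty$, however, fails as written.

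Riesz--Thorin needs an operator between $L^p$ spaces. The natural one is $F\mapsto D^2w_h$, where $w_h\in\mathring V_h$ satisfies $a(w_h,v_h)=(F,D^2v_h)$ for all $v_h\in\mathring V_h$. It has norm at most $1$ on $L^2$. But $D^2w_h(z)=(F,D^2g_{2,h}^z)$, so its $L^\infty$-norm is $\sup_z\norm{D^2g^z_{2,h}}_{L^1(\Omega)}$. This is of order $|\ln h|$, because $D^2g_2^z$ behaves like a kernel homogeneous of degree $-2$ in $x-z$ for $h\lesssim|x-z|\lesssim1$.

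The uniform bound at $p=\infty$ holds only because the datum is the Hessian of an $H^2_0$-function, which gives $a(u,g_2^z)=(D^2u,\delta_z)$. Riesz--Thorin does not see this structure and yields only a factor $|\ln h|^{1-2/p}$. Interpolating instead on the constrained scale $W^{2,p}(\Omega)\cap H^2_0(\Omega)$, with endpoint $p=\infty$ on a polygon, would need an interpolation theorem you have not justified.

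The missing ingredient is the weighted estimate of Corollary~\ref{corollary W2p}, $\sup_z\bigl(\int_\Omega\Gamma_z^{2+\sigma}|D^2(g_2^z-g_{2,h}^z)|^2\,\mathrm{d}x\bigr)^{1/2}\lesssim h^{\sigma/2}$, obtained by squaring the annular estimates. With it, you apply H\"older's inequality with weights $\Gamma_z^{\mp(2+\sigma)}$ to $a(u,g_2^z-g_{2,h}^z)$, raise to the power $p$, and integrate over $z$. The factors $h^{-\sigma(p-2)/2}$, $h^{\sigma p/2}$ and $\int_\Omega\Gamma_z^{-2-\sigma}\,\mathrm{d}z\lesssim h^{-\sigma}$ then balance exactly; this is the Rannacher--Scott argument.

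For the lower-order estimates your route differs from the paper's and is heavier. You do not need $\seminorm{g_i-g_{i,h}}_{W^{2,1}}\lesssim h^{2-i}$. Galerkin orthogonality gives $(D^i(u-u_h),\delta_z)=a(u-u_h,g_i-\Pi_hg_i)$. Together with the $W^{2,\infty}$ quasi-optimality already proved, this reduces everything to the interpolation errors $\seminorm{g_i-\Pi_hg_i}_{W^{2,1}}\lesssim h^2\seminorm{g_i}_{W^{4,1}}$, plus $h^3\seminorm{g_0}_{W^{5,1}}$ away from $z$ and the vertices. These are exactly the global $L^1$-bounds of Lemma~\ref{lemma a priori estimates for gi}.

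Your direct dyadic bound can be made to work in the conforming case, since the duality estimate for the $H^1$-term does not depend on $\alpha$. Three points are wrong as stated, though:
\begin{itemize}
\item Near a vertex, the $C^{2,\sigma}$ H\"older estimates give only a gain of $h^\sigma$. To get $h^{2-|\alpha|}$ you need corner-weighted bounds such as $|D^4g_1(x)|\lesssim\rho_\omega(x)^{\sigma-2}\rho_\omega(z)^{-1-\sigma}$.
\item For $\alpha=0$, the global energy estimate gives only $h\seminorm{g_0}_{H^3}\lesssim h\norm{\delta_z}_{H^{-1}}\simeq h|\ln h|^{1/2}$. The inner ball then contributes $h^2|\ln h|^{1/2}$, which spoils the case $\mu\ge4$ unless you prove $\inf_\chi\seminorm{g_0-\chi}_{H^2}\lesssim h$ separately.
\item In the duality step, the near part must be kept as the local term $h\seminorm{g-g_h}_{H^2}$ on the enlarged annulus. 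It is then absorbed using $h/d_j\le K^{-1}$ and finite overlap. Bounding it by the global energy error instead costs $O(1)$ per annulus and reintroduces $|\ln h|$.
\end{itemize}
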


Theorem~\ref{theorem conforming main results} sharpens the earlier maximum-norm estimates in \cite{MR1252774}. In particular, the \(W^{2,p}\)-stability \eqref{conforming W2,p}, including the endpoint \(p=\infty\), yields a quasi-optimal \(W^{2,p}\)-error estimate in terms of the best approximation error without logarithmic deterioration. Moreover, \eqref{conforming W1,infty} removes the logarithmic loss in the \(W^{1,\infty}\)-estimate of \cite{MR1252774}. More significantly, the \(L^\infty\)-estimates \eqref{conforming Linfty-log} and \eqref{conforming Linfty} recover the missing algebraic order in \cite{MR1252774}: \eqref{conforming Linfty-log} gives the optimal algebraic rate up to a logarithmic factor, while for \(\mu\ge4\), \eqref{conforming Linfty} removes this logarithmic factor as well. Consequently, for sufficiently smooth solutions, the expected pointwise convergence rates of high-order conforming elements are recovered. For example, for the Argyris element, the rates are \(O(h^4)\) in \(W^{2,p}\), \(O(h^5)\) in \(W^{1,\infty}\), and \(O(h^6)\) in \(L^\infty\), with no additional logarithmic losses.

Our second main result concerns the nonconforming Morley element.  Here the
finite element solution is again defined by the same abstract scheme
\eqref{eq:discrete-scheme}, but with the Morley space and the broken bilinear
form \eqref{eq:discrete-bilinear-form}.  The energy error is only first order,
but the lower-order pointwise errors exhibit a sharper structure.

\begin{theorem}[Morley element]\label{theorem nonconforming main results}
Let \(V_h\) be the triangular Morley finite element space, and let
\(u_h\in\mathring V_h\) be the Morley finite element solution determined by \eqref{eq:discrete-scheme} with the broken bilinear form \eqref{eq:discrete-bilinear-form}. 

If \(2\le p\le\infty\), \(u\in W^{3,p}(\Omega)\), and \(f\in L^p(\Omega)\),
then
\begin{subequations}
\begin{align}
    \label{nonconforming W2,p}
    \seminorm{u-u_h}_{W^{2,p}(\Omega)}
    \lesssim
    h\left(
    \seminorm{u}_{W^{3,p}(\Omega)}
    +
    h\norm{f}_{L^p(\Omega)}
    \right).
\end{align}

If \(u\in W^{3,\infty}(\Omega)\) and \(f\in L^\infty(\Omega)\), then
\begin{align}\label{nonconforming W1,infty}
    \seminorm{u-u_h}_{W^{1,\infty}(\Omega)}
    \lesssim
    h^2|\ln h|
    \left(
    \seminorm{u}_{W^{3,\infty}(\Omega)}
    +
    h\norm{f}_{L^\infty(\Omega)}
    \right).
\end{align}

If \(2< p\le\infty\), \(u\in W^{3,p}(\Omega)\),
\(f\in L^p(\Omega)\),  then
\begin{align}\label{nonconforming Linfty}
    \norm{u-u_h}_{L^\infty(\Omega)}
    \lesssim
    h^2
    \left(
    \seminorm{u}_{W^{3,p}(\Omega)}
    +
    h\norm{f}_{L^p(\Omega)}
    \right).
\end{align}
\end{subequations}
\end{theorem}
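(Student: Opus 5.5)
We prove the three estimates of Theorem~\ref{theorem nonconforming main results} in the order \eqref{nonconforming W2,p}, \eqref{nonconforming W1,infty}, \eqref{nonconforming Linfty}. Each time the error is written as an interpolation part plus a discrete part, and the discrete part is tested against a regularized (derivative) Green's function. Let \(\Pi_h\) be the canonical Morley interpolant. Set \(e_h=\Pi_h u-u_h\in\mathring V_h\) and \(\eta=u-\Pi_h u\). Standard interpolation theory gives \(\seminorm{\eta}_{W^{k,p}}\lesssim h^{3-k}\seminorm{u}_{W^{3,p}}\) for \(k\le 2\). The Morley interpolant is Galerkin-orthogonal in the broken Hessian inner product, \(a_h(\eta,v_h)=0\) for all \(v_h\in V_h\), because the edge-normal-derivative and vertex degrees of freedom make \(\int_K \nabla^2\eta:\nabla^2 v_h=0\) elementwise. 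Hence
\[
a_h(e_h,v_h)=E_h(u,v_h):=a_h(u,v_h)-(f,v_h),
\]
and \(E_h\) is the usual consistency error. After integrating by parts elementwise it becomes a sum of edge terms involving \(\partial^2 u\) and the jumps of \(\nabla v_h\). The term \((f,v_h-E v_h)\) appears when an enriching operator \(E:V_h\to H^2_0\) (e.g.\ into HCT) is used. Writing \(E_h(u,v_h)=a_h(u,v_h-Ev_h)-(f,v_h-Ev_h)\) and using \(\seminorm{v_h-Ev_h}_{h,k}\lesssim h^{2-k}\seminorm{v_h}_{h,2}\), with the edge jumps controlled by \(h\seminorm{u}_{W^{3}}\) after subtracting elementwise means, we get
\[
|E_h(u,v_h)|\lesssim h\bigl(\seminorm{u}_{W^{3,p}}+h\norm{f}_{L^p}\bigr)\,\seminorm{v_h}_{W^{2,p'}_h}.
\]

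\textbf{Proof of \eqref{nonconforming W2,p}.} This estimate follows by duality once a discrete inf--sup/stability of \(a_h\) in \(W^{2,p}_h\times W^{2,p'}_h\) is available. We obtain it from the uniform \(W^{2,1}\) error estimate for the regularized Green's function, which is the core dyadic result of the paper and is applied here to the Morley discretization. Fix a point \(x_0\) in an element \(T\) and a direction pair, and let \(\delta\) be a smoothed Dirac mass on \(T\). Define \(g\) by \(\Delta^2 g=\partial_{ij}\delta\) with clamped boundary conditions, and let \(g_h\in\mathring V_h\) be its Morley approximation. Then
\[
\partial_{ij}e_h(x_0)\approx a_h(e_h,g)=a_h(e_h,g_h)+a_h(e_h,g-g_h)=E_h(u,g_h)+\text{consistency of }g.
\]
The key claim is \(\norm{g-g_h}_{W^{2,1}_h}\lesssim 1\) uniformly in \(x_0\) and \(h\). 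It is proved by a dyadic decomposition around \(x_0\), combining three ingredients: the Morley local energy estimate with only an \(H^1\) remainder, a duality bound for that remainder using \(H^3\)-regularity on convex polygons, and the H\"older estimates of the Green's function derivatives, which make the dyadic sum converge geometrically. The claim gives the \(p=\infty\) case. The case \(p=2\) is the energy estimate, and Riesz--Thorin interpolation on the linear error map covers \(2<p<\infty\).

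\textbf{Proof of \eqref{nonconforming W1,infty} and \eqref{nonconforming Linfty}.} Take \(\delta\) and \(\partial_i\delta\) respectively as right-hand sides and let \(g\) be the corresponding Green's function. Then
\[
\partial_i e_h(x_0)=E_h(u,g_h)=E_h(u,g_h-g)+E_h(u,g),
\]
where \(E_h(u,g)\) vanishes or is of higher order when \(g\in H^2_0\). To obtain \(h^2\) we must exploit the double structure of the consistency error,
\[
|E_h(u,w_h)|\lesssim h\,\seminorm{u}_{W^{3,\infty}}\,h\seminorm{w_h}_{W^{3,1}_h}\ \text{(schematically)},
\]
so the quantity to bound is \(h\sum_T\seminorm{g-g_h}_{W^{2,1}(T)}\) weighted by local approximation of \(g\). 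For the gradient Green's function the dyadic sum of \(\seminorm{g}_{W^{3,1}(\Omega_j)}\) behaves like \(\sum_j 1\sim|\ln h|\), which produces the logarithm in \eqref{nonconforming W1,infty}. For the point-value Green's function one gains a factor \(d_j\), and the sum \(\sum_j d_j^{2/p'-1}\) converges precisely when \(p>2\) (H\"older in \(L^p\)). This gives \eqref{nonconforming Linfty} without a logarithm. Finally, \(\eta\) contributes \(h^2\) directly.

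\textbf{Main obstacle.} The hardest step is the uniform \(W^{2,1}_h\) bound for \(g-g_h\) in the nonconforming setting. There the local energy estimate contains consistency terms on edges in each dyadic annulus, and these must be absorbed using only \(H^1\)-type remainders. The first thing I would try is to use the enriching operator locally with cutoff functions, and to check the superapproximation assumption for Morley with \(\mu=2\).
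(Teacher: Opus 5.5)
\textbf{The interpolation step for $2<p<\infty$ does not go through as stated.} Your $p=\infty$ case is essentially the paper's, resting on Theorem~\ref{theorem g-gh}, with one caveat. Because $\seminorm{g_2}_{W^{2,1}}$ is of order $|\ln h|$, you must not bound $E_h(u,g_h)$ directly. Instead write $E_h(u,g_h)=-E_h(u,g-g_h)$ and use a consistency bound valid for $w\in V+\mathring V_h$, as in Lemma~\ref{lemma Morley consistency estimate}; your enriching-operator bound only covers discrete $v_h$.

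For $2<p<\infty$, Riesz--Thorin requires a linear operator between $L^p$ spaces. Your error map $u\mapsto D_h^2(u-u_h)$ is only controlled on the constrained space $\{u\in H_0^2(\Omega)\cap W^{3,q}(\Omega):\ \Delta^2u\in L^q(\Omega)\}$, normed by $\seminorm{u}_{W^{3,q}}+h\norm{\Delta^2u}_{L^q}$. The consistency error is small only because $f=\Delta^2u$, so $D^3u$ and $f$ cannot be treated as independent $L^q$ data. Identifying the interpolation space of such an intersection space between $q=2$ and $q=\infty$, uniformly in $h$, is a nontrivial problem you do not address.

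The paper does not interpolate; it uses the Rannacher--Scott weighted argument. The dyadic proof is rerun in squared form to get $M^z_{\sigma,h}\lesssim h^{\sigma/2}$ (Corollary~\ref{corollary W2p}). Lemma~\ref{lemma weighted D3} gives $\int_\Omega\Gamma_z^{2+\sigma}|D^3g_2|^2\lesssim h^{\sigma-2}$ for the consistency term of $g_2$. H\"older's inequality with weights $\Gamma_z^{\pm(2+\sigma)}$, the bound $\int_\Omega\Gamma_z^{-2-\sigma}\lesssim h^{-\sigma}$, and integration in $z$ then give \eqref{nonconforming W2,p} directly. The unweighted $W^{2,1}$ bound alone does not suffice.

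\textbf{Your lower-order estimates rest on bounds you never establish.} Your route needs $\seminorm{g_1-g_{1,h}}_{W_h^{2,1}}\lesssim h|\ln h|$ and $\seminorm{g_0-g_{0,h}}_{W_h^{2,p'}}\lesssim h$ with $p'<2$. Each would be a new dyadic theorem with its own pollution and consistency terms. The second is a $W^{2,q}$ error estimate for $q<2$, outside the range $2\le p\le\infty$ the paper proves. Your schematic bound $|E_h(u,w_h)|\lesssim h^2\seminorm{u}_{W^{3,\infty}}\seminorm{w_h}_{W_h^{3,1}}$ is also vacuous on $V_h$, where $\seminorm{w_h}_{W_h^{3,1}}=0$.

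The missing idea is to bootstrap from the $W^{2,p}$ error estimate already proved, using only the continuous $g_i$. Write $D^i(u-u_h)(z)$ through $(u-u_h,\Delta^2g_i)$ and split it into the terms $J_1$--$J_5$:
interpolation terms;
$a_h(u-u_h,g_i-\Pi_hg_i)$;
the consistency of $g_i$ tested with $u-u_h$;
the consistency of $u$ tested with $g_i-\Pi_hg_i$.
Each term is bounded by $\seminorm{u-u_h}_{W_h^{2,p}}\lesssim h(\seminorm{u}_{W^{3,p}}+h\norm{f}_{L^p})$ times a Green's function factor. For $p=\infty$ that factor is $h\seminorm{g_1}_{W^{3,1}}\lesssim h|\ln h|$. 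For $p>2$ it is $h\seminorm{g_0}_{W^{3,p'}}\lesssim h$, since $g_0\in W^{3,p'}$ uniformly only for $p'<2$. No discrete Green's functions $g_{1,h}$ or $g_{0,h}$ are needed. This is exactly where the $|\ln h|$ in \eqref{nonconforming W1,infty} and the restriction $p>2$ in \eqref{nonconforming Linfty} come from.
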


The estimate \eqref{nonconforming Linfty} improves the earlier
\(L^\infty\)-theory for nonconforming biharmonic elements by removing the
logarithmic factor in the function-value error.  The remaining logarithm in
\eqref{nonconforming W1,infty} is caused by the low polynomial degree of the
Morley element and is analogous to the logarithmic behavior of linear finite
elements for second-order problems.  Since the solution of the clamped
biharmonic problem on a convex polygon has \(W^{3,2+\varepsilon}\)-regularity
for some \(\varepsilon>0\) depending only on the domain \cite{MR775683},
\eqref{nonconforming Linfty} gives the second-order \(L^\infty\)-bound essentially under
the assumption $f\in L^p(\Omega)$ for any $p>2$.  Combined with the lower bound of Hu and Shi
\cite{MR2988473} for lower-order plate elements, this shows the sharpness of
the Morley \(L^q\)-error:
\[
    \norm{u-u_h}_{L^q(\Omega)}\simeq h^2,
    \qquad 2\leq q\leq \infty,
\]
under the corresponding nondegeneracy assumptions.


For the reader's convenience, the interpolation parameter $\mu$ associated with each finite element space and the corresponding convergence rates in various norms are summarized in Table~\ref{tab:interpolation-rates}.

\begin{table}[htbp]
\caption{Interpolation parameters and convergence rates for several
finite element spaces in the \(W^{2,p}\), \(W^{1,\infty}\),
and \(L^\infty\) norms. The rates are stated for sufficiently smooth solutions and data.}
\label{tab:interpolation-rates}
\centering

\small
\setlength{\tabcolsep}{4pt}
\renewcommand{\arraystretch}{1.0}

\begin{tabular}{lcccc}
\toprule
Finite element & $\mu$ & $W^{2,p}$ & $W^{1,\infty}$ & $L^\infty$ \\
\midrule
Argyris & $5$ & $O(h^4)$ & $O(h^5)$ & $O(h^6)$ \\
Bell & $4$ & $O(h^3)$ & $O(h^4)$ & $O(h^5)$ \\
BFS & $3$ & $O(h^2)$ & $O(h^3)$ & $O\bigl(h^4|\ln h|\bigr)$ \\
HCT & $3$ & $O(h^2)$ & $O(h^3)$ & $O\bigl(h^4|\ln h|\bigr)$ \\
Morley & $2$ & $O(h)$ & $O\bigl(h^2|\ln h|\bigr)$ & $O(h^2)$ \\
\bottomrule
\end{tabular}
\end{table}

The rest of the paper is organized as follows.  Section~\ref{sec:fem-setting}
introduces the finite element spaces, the broken bilinear form, and the numerical scheme \eqref{eq:discrete-scheme}.  Section~\ref{sec:auxiliary-estimates}
develops the technical tools needed to analyze that scheme: superapproximation
estimates, local energy estimates, H\"older estimates for Green's functions,
and a priori estimates for regularized Green's functions.  The local estimates
in Section~\ref{subsec:local-energy} are the bridge between the discrete scheme
of Section~\ref{sec:fem-setting} and the dyadic Green's function argument used
later.  Section~\ref{sec:pointwise-estimates} proves the sharp pointwise estimates for conforming finite
elements and the nonconforming Morley element. Section~\ref{conclusion} concludes the paper.  Appendix~\ref{supp:assumption} verifies the finite element hypotheses, Appendix~\ref{supp:regularity} establishes the required $W^{4,p}$-regularity estimate, and Appendix~\ref{supp:Morley} provides the detailed consistency error estimates for the Morley element.

\section{Finite element setting}\label{sec:fem-setting}

This section fixes the notation used in the rest of the paper and gives the
precise finite element scheme referred to in the main results.  The pointwise
analysis below treats both \(C^1\)-conforming plate elements and
nonconforming plate elements.  Therefore, unless explicitly stated otherwise,
Sobolev norms and seminorms are understood in the broken sense with respect to
the mesh.

\subsection{Meshes, broken spaces, and polynomial spaces}

Let \(\mathcal T_h\) be a shape-regular and quasi-uniform
partition of \(\Omega\) into triangles or rectangles, according
to the finite element under consideration,  and let \(h:=\max_{T\in\mathcal{T}_h} h_T\), where \(h_T\) is the
diameter of \(T\).  For a subdomain \(U\subset \Omega\), define
$\mathcal{T}_h(U):=\{T\in\mathcal{T}_h:\ T\subset U\}.$
For simplicity, all subdomains used in the local estimates are assumed to be
unions of elements.  This convention avoids inessential boundary-layer
notation and does not affect the estimates.

For \(m\geq 0\), we define the broken Sobolev space
\begin{equation}\label{eq:broken-Hm-space}
    H^m(U,\mathcal{T}_h)
    :=
    \{v\in L^2(U):\ v|_T\in H^m(T)
    \text{ for all }T\in\mathcal{T}_h(U)\}.
\end{equation}
For \(1\leq p<\infty\), the broken \(W^{m,p}\)-norm and seminorm are
\begin{subequations}\label{eq:broken-Wmp-defs}
\begin{align*}
    \norm{v}_{W^{m,p}(U,\mathcal{T}_h)}
    &:=
    \left(
    \sum_{T\in\mathcal{T}_h(U)}
    \norm{v}_{W^{m,p}(T)}^p
    \right)^{\frac1p}, \ 
    \seminorm{v}_{W^{m,p}(U,\mathcal{T}_h)}
    :=
    \left(
    \sum_{T\in\mathcal{T}_h(U)}
    \seminorm{v}_{W^{m,p}(T)}^p
    \right)^{\frac1p}. 
\end{align*}
\end{subequations}
For \(p=\infty\), we use the corresponding maximum over elements:
\begin{subequations}\
\begin{align*}
    \norm{v}_{W^{m,\infty}(U,\mathcal{T}_h)}
    &:=
    \max_{T\in\mathcal{T}_h(U)}
    \norm{v}_{W^{m,\infty}(T)},\quad 
    \seminorm{v}_{W^{m,\infty}(U,\mathcal{T}_h)}
    :=
    \max_{T\in\mathcal{T}_h(U)}
    \seminorm{v}_{W^{m,\infty}(T)}. 
\end{align*}
\end{subequations}
When no confusion can arise, we suppress the symbol \(\mathcal{T}_h\).  In
particular, for \(m\geq 1\), we write
\begin{equation}\label{eq:broken-Hhm-seminorm}
    \seminorm{v}_{H_h^m(U)}
    :=
    \seminorm{v}_{H^m(U,\mathcal{T}_h)},
    \qquad
    \norm{v}_{H_h^m(U)}
    :=
    \norm{v}_{H^m(U,\mathcal{T}_h)}.
\end{equation}
For \(m=0\), \(\seminorm{v}_{H_h^0(U)}\) is understood as
\(\norm{v}_{L^2(U)}\).

For \(k\geq0\), let \(P_k(T)\) denote the space of polynomials
of total degree at most \(k\) when \(T\) is a triangle, and
of degree at most \(k\) in each coordinate when \(T\) is a
rectangle. We define the corresponding broken polynomial
space by
\begin{equation*}
    P_k(U,\mathcal{T}_h)
    :=
    \{v\in L^2(U):\ v|_T\in P_k(T)
    \text{ for all }T\in\mathcal{T}_h(U)\}.
\end{equation*}
When \(U=\Omega\), we simply write \(P_k(\mathcal{T}_h)\).  For
\(v\in L^1(U)\), the average of \(v\) over \(U\) is denoted by  $P_U^0 v
    :=
    \frac{1}{|U|}\int_U v\,\mathrm{d}x$. 
We also use \(I_h^k\) to denote the standard nodal interpolation
operator into \(P_k(\mathcal T_h)\cap C^0(\overline\Omega)\).

Throughout the paper, \(D\) denotes a first-order derivative, \(D^2\) denotes
the Hessian, and \(D^m\) denotes derivatives of order \(m\).  We write
\(\partial_i\) for the derivative with respect to the \(i\)-th coordinate.

\subsection{Continuous weak formulation}

The weak formulation of \eqref{definition model problem} is:
find \(u\in H_0^2(\Omega)\) such that
\begin{equation}\label{eq:continuous-weak-problem}
    a(u,v)=(f,v)
    \qquad \forall v\in V:=H_0^2(\Omega),
\end{equation}
where the bilinear form \(a(\cdot,\cdot)\) is defined by
\begin{equation}\label{eq:continuous-bilinear-form}
    a(w,v)
    :=
    \int_\Omega D^2w:D^2v\,\mathrm{d}x,
\end{equation}
and $(\cdot,\cdot)_U$  denotes the standard $L^2$ inner product on $U\subset\Omega$ (if $U=\Omega$, we omit the subscript $U$).
Here $A:B:=\sum_{i,j=1}^2 A_{ij}B_{ij}$ for two \(2\times2\) matrices \(A\) and \(B\).  On $V\times V$, the
bilinear form \(a(\cdot,\cdot)\) is coercive and induces a norm equivalent to
the \(H^2\)-seminorm.

\subsection{Finite element spaces and the numerical scheme}

Let \(V_h\) denote either a \(C^1\)-conforming finite element space or a
nonconforming plate element space, such as the triangular Morley element.  For
a subdomain \(U\subset\Omega\), \(V_h(U)\) denotes the restriction of \(V_h\)
to \(U\), and \(\mathring{V}_h(U)\) denotes the corresponding local finite
element space with homogeneous boundary degrees of freedom on \(\partial U\).
When \(U=\Omega\), we use the abbreviations $V_h:=V_h(\Omega)$ and    $\mathring{V}_h:=\mathring{V}_h(\Omega)$.
For conforming elements, \(\mathring{V}_h\subset H_0^2(\Omega)\).  For
nonconforming elements, \(\mathring{V}_h\) is the usual nonconforming analogue
of \(H_0^2(\Omega)\), with the clamped boundary conditions imposed through the
degrees of freedom.

For \(w,v\in H^2(\Omega,\mathcal{T}_h)\), define the broken bilinear form
\begin{equation}\label{eq:discrete-bilinear-form}
    a_h(w,v)
    :=
    \sum_{T\in\mathcal{T}_h}
    \int_T D^2w:D^2v\,\mathrm{d}x.
\end{equation}
The finite element approximation of \eqref{definition model problem} is the
following numerical scheme: Find  $u_h\in \mathring{V}_h$ such that
\begin{equation}\label{eq:discrete-scheme}
\begin{aligned}
    a_h(u_h,v_h)=(f,v_h)
    \qquad \forall v_h\in \mathring{V}_h.
\end{aligned}
\end{equation}
For conforming elements, \(a_h=a\) on \(V_h\times V_h\), and
\eqref{eq:discrete-scheme} is the standard Galerkin method.  For
nonconforming elements, \(a_h\) is the natural broken energy form.

The local analysis below uses a discrete negative seminorm associated with
\eqref{eq:discrete-bilinear-form}.  For
\(r\in H^2(U,\mathcal{T}_h)\), define
\begin{equation}\label{eq:local-negative-seminorm}
    \seminorm{r}_{H_h^{-2}(U)}
    :=
    \sup_{0\ne w_h\in \mathring{V}_h(U)}
    \frac{|a_h(r,w_h)|}
    {\seminorm{w_h}_{H_h^2(U)}}.
\end{equation}
This seminorm measures the residual of \(r\) against local discrete test
functions.  In the conforming case it is mainly a localization device, whereas
for nonconforming elements it also captures the local consistency error.

\section{Local energy estimates and estimates for the Green's function of the biharmonic problem}\label{sec:auxiliary-estimates}

This section develops two technical estimates. The first ingredient is a local energy estimate for the error measured in the
broken \(H^2\)-seminorm.  Its proof relies on a superapproximation property
and is formulated so that it applies simultaneously to conforming \(C^1\)
plate elements and to nonconforming elements such as the Morley element.

The second ingredient, developed in the following subsection, is a H\"older
estimate for derivatives of the Green's function of the clamped biharmonic
operator.  Together with the local energy estimate, this H\"older continuity
allows us to sum dyadic estimates for regularized Green's functions without
introducing unnecessary logarithmic factors.  Thus Section~\ref{subsec:local-energy}
provides the link between the abstract finite element discretization and the pointwise estimates proved later.

\subsection{Superapproximation and local energy estimates}\label{subsec:local-energy}

The purpose of this subsection is to localize the error equation associated
with \eqref{eq:discrete-scheme}.  We work on nested element patches and use
cut-off functions to separate the error on a small patch from the behavior of
the solution on a larger patch.

Let \(U_1\Subset  U_2\subset\Omega\) be two element patches.  We use
\(\eta\in C^\infty(\Omega)\) to denote a cut-off function satisfying
\begin{align}\label{pro of eta}
        \begin{aligned}
            &\eta \equiv 1 \quad \text{in } U_1,
            \quad 0\leq \eta \leq 1 \quad \text{in } U_2\setminus U_1,
            \quad \eta \equiv 0 \quad \text{in } \Omega\setminus U_2,\\            &\norm{D^k\eta}_{L^\infty(\Omega)}\lesssim d^{-k},
            \qquad k=0,1,2,\ldots,
        \end{aligned}
\end{align}
where $d:=
    \operatorname{dist}
    \bigl(\partial U_1\setminus\partial\Omega,\,
          \partial U_2\setminus\partial\Omega\bigr).$
In the local arguments below, we always assume that \(d\geq \kappa h\), where
\(\kappa>0\) is a sufficiently large constant independent of \(h\).

The following assumption collects precisely the finite element properties
needed to localize the scheme \eqref{eq:discrete-scheme}.  The interpolation
property in (A1) controls approximation errors, (A2) provides the required superapproximation property, and (A3) supplies a conforming companion needed in the lower-order part
of the local estimate.

\begin{assumption}[Finite element hypotheses]\label{assumption}
Assume that $v_h$ is continuous at all vertices for any $v_h\in V_h$. Moreover, we assume that \(V_h\) satisfies the following properties. Recall that $\mu\geq 2$ is the maximal degree of polynomials exactly reproduced by the corresponding interpolation operator.

\begin{enumerate}
\item[(A1)] Denote by $\omega_T$  the element patch of $T$. There exists a boundary-preserving (quasi-) interpolation operator $\Pi_h$, with the projection property, satisfying
for any \(T\in\mathcal{T}_h\), any
\(v\in W^{s,p}(\omega_T)\), \(1\le p\le \infty\), $3\leq s \leq \mu+1$ and \(m=0,1,2\),
\begin{equation}\label{approximation of interpolation}
  \seminorm{v-\Pi_hv}_{W^{m,p}(T)}
  \lesssim
  h^{s-m}\seminorm{v}_{W^{s,p}(\omega_T)}.
\end{equation}
Moreover, there also holds that
\begin{equation}\label{Holder approximation of interpolation}
 \seminorm{v-\Pi_hv}_{W^{2,\infty}(T)}
  \lesssim  h^{t}\seminorm{v}_{W^{2+t,\infty}(\omega_T)} \quad \forall v\in W^{2+t,\infty}(\omega_T),\, 0\leq t\leq 1.    
\end{equation}

\item[(A2)] Let \(\eta\) be as in \eqref{pro of eta}.  For every
\(v_h\in \mathring{V}_h\), there exists \(w_h\in \mathring{V}_h(U_2)\) such
that
\begin{equation}\label{assumption pi}
    \seminorm{\eta^4v_h-w_h}_{H_h^2(U_2)}
    \lesssim
    h\Bigl(
    d^{-1}\seminorm{\eta^2 v_h}_{H_h^2(U_2)}
    +d^{-2}\seminorm{v_h}_{H_h^1(U_2)}
    +d^{-3}\norm{v_h}_{L^2(U_2)}
    \Bigr).
\end{equation}
If \(U_2\cap\partial\Omega=\varnothing\), we assume that
\eqref{assumption pi} is also valid for all \(v_h\in V_h\).

\item[(A3)] For any \(v_h\in \mathring{V}_h(U)\), there exists
\(v\in H_0^2(U)\) such that
\begin{equation}\label{assumption 4}
    \seminorm{v-v_h}_{H_h^j(U)}
    \lesssim
    h^{2-j}\seminorm{v_h}_{H_h^2(U)},
    \qquad j=0,1,2.
\end{equation}
\end{enumerate}
\end{assumption}
Further discussion and verification of the above assumptions are provided in Appendix~\ref{supp:assumption}.

\par We now derive the local energy estimate.  The estimate is stated for a general
pair \(v\in H^2(\Omega,\mathcal{T}_h)\) and \(v_h\in V_h\).  Its main
application is obtained by taking \(v=u-\chi_h\) and \(v_h=u_h-\chi_h\), where
\(u\) solves \eqref{eq:continuous-weak-problem}, \(u_h\) is the approximate solution, and \(\chi_h\in\mathring{V}_h\) is an
arbitrary comparison function.  In this way the abstract local estimate below
becomes a local estimate for the finite element error.

\begin{theorem}[Local energy estimate]\label{theorem local energy estimate}
Let \(U\subset G_0\subset G\) be element patches such that
\[
    \operatorname{diam}(U)\simeq \operatorname{diam}(G_0)
    \simeq \operatorname{diam}(G)\simeq d,
\]
\[
    \operatorname{dist}
    \bigl(\partial U\setminus\partial\Omega,\,
          \partial G_0\setminus\partial\Omega\bigr)
    \simeq d \quad \text{and} \quad \operatorname{dist}
    \bigl(\partial G_0\setminus\partial\Omega,\,
          \partial G\setminus\partial\Omega\bigr)
    \simeq d.
\]
Assume \(d\ge \kappa h\), where \(\kappa\) is sufficiently large.  Then, under Assumption~\ref{assumption}, for
any \(v\in H^2(\Omega,\mathcal{T}_h)\) and \(v_h\in\mathring{V}_h\),
\begin{equation}\label{local energy estimate}
\begin{aligned}
        \seminorm{v-v_h}_{H_h^2(U)}
       & \lesssim
        \sum_{j=0}^2 d^{j-2}\seminorm{v}_{H_h^j(G)}
        + d^{-1}\seminorm{v-v_h}_{H_h^1(G_0)}
        + d^{-2}\norm{v-v_h}_{L^2(G_0)}
        \\
        &\quad
        + \seminorm{v-v_h}_{H_h^{-2}(G_0)}.
\end{aligned}
\end{equation}
If \(G_0\cap\partial\Omega=\varnothing\), then
\eqref{local energy estimate} holds for all \(v_h\in V_h\).
\end{theorem}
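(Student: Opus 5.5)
We prove the estimate by the standard superapproximation argument with a fourth-power cut-off. Set \(e:=v-v_h\) and \(e_h:=v_h\), and choose \(\eta\) as in \eqref{pro of eta} with \(U_1=U\), \(U_2=G_0\). Since \(\eta\equiv1\) on \(U\), the triangle inequality gives
\[
\seminorm{e}_{H_h^2(U)}\le \seminorm{\eta^2 v}_{H_h^2(G_0)}+\seminorm{\eta^2 v_h}_{H_h^2(G_0)} .
\]
The first term is bounded by \(\sum_j d^{j-2}\seminorm{v}_{H_h^j(G)}\) via the product rule. It therefore suffices to bound \(\seminorm{\eta^2v_h}_{H_h^2}^2\).

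Expanding \(D^2(\eta^2 v_h)\) gives \(\eta^2D^2v_h\) plus terms of order \(d^{-1}|Dv_h|\) and \(d^{-2}|v_h|\). Hence
\[
\seminorm{\eta^2 v_h}_{H_h^2}^2\lesssim a_h(v_h,\eta^4 v_h)+\text{l.o.t.},
\]
where the lower-order terms are \(d^{-1}\seminorm{\eta^2 v_h}_{H_h^2}\seminorm{v_h}_{H_h^1(G_0)}\), \(d^{-2}\seminorm{\eta^2 v_h}_{H_h^2}\norm{v_h}_{L^2(G_0)}\), and squares of these. This uses that \(D(\eta^4)=4\eta^3D\eta\), so the cross terms carry a factor \(\eta^2\).

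Next split
\[
a_h(v_h,\eta^4v_h)=a_h(v_h,\eta^4v_h-w_h)+a_h(v_h,w_h),
\]
with \(w_h\in\mathring V_h(G_0)\) from (A2). For the first term, use \(\seminorm{v_h}_{H_h^2(G_0)}\) together with \eqref{assumption pi}. The right side of \eqref{assumption pi} carries a factor \(h\), and \(\seminorm{v_h}_{H_h^2(G_0)}\) is controlled by an inverse estimate, giving \(h d^{-1}\seminorm{v_h}^2\)-type terms. This term is delicate: \(\seminorm{v_h}_{H_h^2(G_0)}\) is not weighted. It is handled by a kick-back/iteration over a slightly larger intermediate patch between \(U\) and \(G_0\); the factor \(h/d\le\kappa^{-1}\) is small, and the \(H^1,L^2\) terms are controlled by inverse estimates. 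For the second term, write
\[
a_h(v_h,w_h)=a_h(v,w_h)-a_h(e,w_h).
\]
Then \(|a_h(e,w_h)|\le\seminorm{e}_{H_h^{-2}(G_0)}\seminorm{w_h}_{H_h^2}\), and \(|a_h(v,w_h)|\le\seminorm{v}_{H_h^2(G_0)}\seminorm{w_h}_{H_h^2}\). Here \(\seminorm{w_h}_{H_h^2}\lesssim\seminorm{\eta^2 v_h}_{H_h^2}\) plus lower-order terms, again by (A2). Young's inequality absorbs \(\seminorm{\eta^2v_h}^2\).

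The result bounds \(\seminorm{\eta^2 v_h}_{H_h^2}\) by \(\seminorm{v}_{H^2_h(G_0)}+\seminorm{e}_{H_h^{-2}(G_0)}+d^{-1}\seminorm{v_h}_{H_h^1(G_0)}+d^{-2}\norm{v_h}_{L^2(G_0)}\). Writing \(v_h=v-e\) converts the last two terms into the \(e\)-terms in \eqref{local energy estimate} plus \(d^{-1}\seminorm{v}_{H_h^1}+d^{-2}\norm{v}_{L^2}\).

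The main obstacle is the unweighted term \(\seminorm{v_h}_{H_h^2(G_0)}\) produced in the superapproximation step. The first approach to try is a covering argument. Cover \(U\) by balls of radius \(\sim d/\kappa'\) and apply the estimate on nested patches \(U\subset U'\subset G_0\). The right side then contains \((h/d)\seminorm{v_h}_{H^2_h(U')}\), and \(\seminorm{v_h}_{H^2_h(U')}\le \seminorm{e}_{H^2_h(U')}+\seminorm{v}_{H^2_h}\). A finite iteration, or the standard absorption lemma with \(h/d\le\kappa^{-1}\) small, removes \(\seminorm{e}_{H^2_h(U')}\) up to the same right-hand side on \(G_0\).

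When \(G_0\cap\partial\Omega=\varnothing\), (A2) holds for all \(v_h\in V_h\), and the same argument applies verbatim. The local space \(\mathring V_h(G_0)\) ensures \(w_h\) is admissible in the definition of \(\seminorm{\cdot}_{H_h^{-2}(G_0)}\).
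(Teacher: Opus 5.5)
Your argument is correct, and it is a leaner route than the paper's. The paper does not work with $v_h$ directly. It introduces the local energy projection $P_h^G(\eta v)\in\mathring V_h(G)$, with $\eta\equiv1$ on $G_0$, and sets $e_h:=P_h^G(\eta v)-v_h$. It runs the cut-off/superapproximation argument on this discrete function, using the local Galerkin relation $a_h(e_h,w_h)=a_h(v-v_h,w_h)$ for $w_h\in\mathring V_h(G_0)$. It then needs the outer patch $G$, stability of $P_h^G$, and Assumption (A3) with a Poincar\'e inequality on $G$ to control $d^{j-2}\seminorm{P_h^G(\eta v)}_{H_h^j(G_0)}$ for $j=0,1$. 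You instead split $\eta^2(v-v_h)=\eta^2v-\eta^2v_h$ and bound $\eta^2v$ crudely, which already gives exactly the admissible terms $\sum_j d^{j-2}\seminorm{v}_{H_h^j}$. You then apply (A2) to $v_h$ itself and write $a_h(v_h,w_h)=a_h(v,w_h)-a_h(v-v_h,w_h)$. This never uses (A3) or the patch $G$, and it even places the $v$-terms on $G_0$ rather than $G$. Both proofs handle the one unweighted factor in the superapproximation term the same way: by an inverse estimate on a discrete function ($e_h$ in the paper, $v_h$ in yours).

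On that point you overcomplicate things. The term $\seminorm{v_h}_{H_h^2(G_0)}$ is not an obstacle, since $h\seminorm{v_h}_{H_h^2(G_0)}\lesssim\seminorm{v_h}_{H_h^1(G_0)}$. This turns the superapproximation term into
\[
\seminorm{v_h}_{H_h^1(G_0)}\bigl(d^{-1}\seminorm{\eta^2v_h}_{H_h^2(G_0)}+d^{-2}\seminorm{v_h}_{H_h^1(G_0)}+d^{-3}\norm{v_h}_{L^2(G_0)}\bigr).
\]
Young's inequality closes this without any smallness of $h/d$. The resulting $d^{-1}\seminorm{v_h}_{H_h^1(G_0)}$ is admissible after writing $v_h=v-(v-v_h)$.

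You should drop the covering/kick-back paragraph, because as described it does not close on its own. Each step replaces $\seminorm{v_h}_{H_h^2}$ by $\seminorm{v-v_h}_{H_h^2}$ on a larger patch. The contraction factor $h/(R-r)$ is not uniformly small on the arbitrarily thin shells that the standard absorption lemma requires. Finally, the outermost $\seminorm{v-v_h}_{H_h^2(G_0)}$ is not among the right-hand-side terms of the estimate, so you would have to fall back on the inverse estimate anyway.
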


\begin{proof}
We prove the estimate for \(v_h\in\mathring{V}_h\).  The proof in the interior
case is identical, using the interior version of the superapproximation
estimate.

Let \(\eta\) be a cut-off function such that \(\eta\equiv 1\) on \(G_0\) and
\(\eta\equiv 0\) outside \(G\).  Define the local energy projection
\(P_h^G w\in\mathring{V}_h(G)\) by
\[
    a_h(P_h^G w,\chi_h)
    =
    a_h(w,\chi_h)
    \qquad \forall \chi_h\in\mathring{V}_h(G).
\]
Set $e_h:=P_h^G(\eta v)-v_h.$
Since \(\eta=1\) on \(U\), the triangle inequality and the stability of the
local energy projection give
\begin{align}\label{first step local energy}
\begin{aligned}
   \seminorm{v-v_h}_{H_h^2(U)}
   &\le
   \seminorm{\eta v-P_h^G(\eta v)}_{H_h^2(U)}
   +
   \seminorm{e_h}_{H_h^2(U)}
   \\
   &\lesssim
   \sum_{j=0}^2 d^{j-2}\seminorm{v}_{H_h^j(G)}
   +
   \seminorm{e_h}_{H_h^2(U)}.
\end{aligned}
\end{align}

It remains to estimate \(\seminorm{e_h}_{H_h^2(U)}\).  Let
\(\widetilde{\eta}\) be another cut-off function satisfying
\(\widetilde{\eta}\equiv 1\) on \(U\) and
\(\widetilde{\eta}\equiv 0\) outside \(G_0\).  Expanding
\(D^2(\widetilde{\eta}^2e_h)\) and using the bounds on
\(\widetilde{\eta}\), we obtain
\begin{equation}\label{cutoff energy inequality}
\begin{aligned}
    \seminorm{\widetilde{\eta}^2e_h}_{H_h^2(G_0)}^2
    \lesssim\;&
    \left|a_h(e_h,\widetilde{\eta}^4e_h)\right|
    +d^{-2}\seminorm{e_h}_{H_h^1(G_0)}^2
    +d^{-4}\norm{e_h}_{L^2(G_0)}^2.
\end{aligned}
\end{equation}
Indeed, the remainder terms produced by differentiating
\(\widetilde{\eta}\) are bounded by
\[
    d^{-2}\seminorm{e_h}_{H_h^1(G_0)}^2
    +d^{-4}\norm{e_h}_{L^2(G_0)}^2
    +d^{-2}\norm{e_h}_{L^2(G_0)}
      \seminorm{\widetilde{\eta}^2e_h}_{H_h^2(G_0)},
\]
and the last term is absorbed by Young's inequality.

By Assumption~(A2), there exists
\(w_h\in\mathring{V}_h(G_0)\) such that
\begin{equation}\label{superapproximation local energy}
   \seminorm{\widetilde{\eta}^4e_h-w_h}_{H_h^2(G_0)}
   \lesssim
   h\left(
   d^{-1}\seminorm{\widetilde{\eta}^2e_h}_{H_h^2(G_0)}
   +d^{-2}\seminorm{e_h}_{H_h^1(G_0)}
   +d^{-3}\norm{e_h}_{L^2(G_0)}
   \right).
\end{equation}
Since \(w_h\) is supported in \(G_0\), where \(\eta\equiv 1\), the definition
of \(P_h^G\) gives
\[
    a_h(e_h,w_h)
    =
    a_h(v-v_h,w_h).
\]
Therefore,
\[
    a_h(e_h,\widetilde{\eta}^4e_h)
    =
    a_h(e_h,\widetilde{\eta}^4e_h-w_h)
    +
    a_h(v-v_h,w_h).
\]
Using \eqref{superapproximation local energy}, inverse estimates on the support
of \(\widetilde{\eta}\), the definition of the negative seminorm, and
Young's inequality, we infer
\begin{equation}\label{sfinal improved}
    \seminorm{\widetilde{\eta}^2e_h}_{H_h^2(G_0)}
    \lesssim
    d^{-1}\seminorm{e_h}_{H_h^1(G_0)}
    +d^{-2}\norm{e_h}_{L^2(G_0)}
    +\seminorm{v-v_h}_{H_h^{-2}(G_0)}.
\end{equation}
Here the term proportional to \(h d^{-1}
\seminorm{\widetilde{\eta}^2e_h}_{H_h^2(G_0)}\) is absorbed into the left-hand
side by choosing \(\kappa\) sufficiently large.

It remains to bound the lower-order norms of \(e_h\).  By Assumption
\ref{assumption}(A3), the Poincar\'e inequality on the patch \(G\), and the
stability of \(P_h^G\),
\[
\begin{aligned}
    \sum_{j=0}^1 d^{j-2}
    \seminorm{P_h^G(\eta v)}_{H_h^j(G_0)}
    &\lesssim
    \seminorm{P_h^G(\eta v)}_{H_h^2(G)}
    \le
    \seminorm{\eta v}_{H_h^2(G)}
    \lesssim
    \sum_{j=0}^2 d^{j-2}\seminorm{v}_{H_h^j(G)}.
\end{aligned}
\]
Consequently,
\begin{equation}\label{lower order eh}
\begin{aligned}
    \sum_{j=0}^1 d^{j-2}\seminorm{e_h}_{H_h^j(G_0)}
    \lesssim
    \sum_{j=0}^2 d^{j-2}\seminorm{v}_{H_h^j(G)}
    +
    \sum_{j=0}^1 d^{j-2}\seminorm{v-v_h}_{H_h^j(G_0)}.
\end{aligned}
\end{equation}
Combining \eqref{first step local energy}, \eqref{sfinal improved}, and
\eqref{lower order eh} proves \eqref{local energy estimate}.
\end{proof}

\begin{remark}\upshape 
Taking \(v=u-\chi_h\) and \(v_h=u_h-\chi_h\), where \(u\) solves
\eqref{eq:continuous-weak-problem}, \(u_h\) solves the numerical scheme
\eqref{eq:discrete-scheme}, and \(\chi_h\in\mathring{V}_h\), Theorem
\ref{theorem local energy estimate} gives the standard local error estimate
\[
\begin{aligned}
       \seminorm{u-u_h}_{H_h^2(U)}
       \lesssim\;&
       \inf_{\chi_h\in\mathring{V}_h}
       \sum_{j=0}^2 d^{j-2}
       \seminorm{u-\chi_h}_{H_h^j(G)}
       +d^{-1}\seminorm{u-u_h}_{H_h^1(G_0)}
       \\
       &\quad
       +d^{-2}\norm{u-u_h}_{L^2(G_0)}
       +\seminorm{u-u_h}_{H_h^{-2}(G_0)}.
\end{aligned}
\]
The first term is the local approximation error, the next two terms are
lower-order local errors, and the last term is the residual measured in the
negative seminorm \eqref{eq:local-negative-seminorm}.  For conforming methods,
this residual vanishes when the test functions are supported in the patch.  For
nonconforming methods, it represents the local consistency error of the broken
scheme \eqref{eq:discrete-scheme}.  The main difficulty is the local
\(L^2\)-term, whose treatment is addressed in the following corollary.  For simplicity, we present the proof only for triangular elements; rectangular elements can be treated analogously.
\end{remark}

\begin{corollary}[Elimination of the local \(L^2\)-term]\label{corollary drop L2}
Let the assumptions of Theorem \ref{theorem local energy estimate} hold.
Suppose, in addition, that \(G_0\) satisfies one of the following two
conditions:
\[
    \text{(i) } G_0\cap \partial\Omega=\varnothing,
    \qquad
    \text{or}
    \qquad
    \text{(ii) } \operatorname{meas}(\partial G_0\cap\partial\Omega)\simeq d.
\]
Then, for any \(k\ge 2\),
\begin{align}\label{drop L2term}
\begin{aligned}
    \seminorm{u-u_h}_{H_h^2(U)}
    \lesssim\;&
    \inf_{\chi_h\in\mathring{V}_h}
    \sum_{j=0}^2 d^{j-2}
    \seminorm{u-\chi_h}_{H_h^j(G)}
    \\
    &+
    \inf_{\widetilde{\chi}_h\in P_k(G_0,\mathcal{T}_h)\cap H^1(G_0)}
    \sum_{j=0}^1 h^{j-1}d^{-1}
    \seminorm{u-\widetilde{\chi}_h}_{H^j(G_0)}
    \\
    &+
    d^{-1}\seminorm{u-u_h}_{H_h^1(G_0)}
    +
    \seminorm{u-u_h}_{H_h^{-2}(G_0)}.
\end{aligned}
\end{align}
\end{corollary}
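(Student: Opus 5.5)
The plan is to reduce the local \(L^2\)-term in Theorem~\ref{theorem local energy estimate} to an \(H^1\)-term by a Poincar\'e-type inequality. The inequality can be applied only after the error is shifted by a function that the left-hand side and the residual cannot detect. Throughout, apply the remark following Theorem~\ref{theorem local energy estimate} with \(v=u-\chi_h\) and \(v_h=u_h-\chi_h\), so that \(v-v_h=u-u_h=:e\). It then suffices to show
\[
 d^{-2}\norm{e-c}_{L^2(G_0)}\lesssim d^{-1}\seminorm{e}_{H_h^1(G_0)}
 +\sum_{j=0}^1 h^{j-1}d^{-1}\seminorm{u-\widetilde\chi_h}_{H^j(G_0)}
\]
for a suitable constant \(c\), with \(c=0\) in case (ii).

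\emph{Case (i).} Here Theorem~\ref{theorem local energy estimate} holds for all \(v_h\in V_h\), and constants belong to \(V_h\) for every element considered, since \(\mu\ge2\). I would therefore apply the theorem with \(v_h\) replaced by \(v_h+c\) and \(v\) by \(v+c\), where \(c=P^0_{G_0}e\). This leaves the following quantities unchanged:
\begin{itemize}
\item the left-hand side \(\seminorm{e}_{H_h^2(U)}\), because \(D^2c=0\);
\item the term \(\seminorm{e}_{H_h^1(G_0)}\);
\item the residual \(\seminorm{e}_{H_h^{-2}(G_0)}\), because \(a_h(c,\cdot)=0\).
\end{itemize}
The approximation terms \(d^{j-2}\seminorm{v+c}_{H_h^j(G)}\) change only for \(j=0\). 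To avoid that change, I would choose \(\chi_h\) with its local constant already built in, or equivalently bound \(|c|\,|G|^{1/2}\) by the same quantities. In case (ii) no shift is needed: \(u=\partial_n u=0\) on \(\partial\Omega\), the boundary degrees of freedom of \(u_h\) vanish, and \(\operatorname{meas}(\partial G_0\cap\partial\Omega)\simeq d\), so a Friedrichs inequality on the portion of \(\partial G_0\cap\partial\Omega\) replaces the mean-value Poincar\'e inequality.

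\emph{Broken Poincar\'e step.} For \(C^1\) elements, \(e\in H^1(G_0)\) and the scaled Poincar\'e or Friedrichs inequality gives \(\norm{e-c}_{L^2(G_0)}\lesssim d\seminorm{e}_{H^1(G_0)}\) directly; the \(\widetilde\chi_h\)-term is then not even needed. For nonconforming spaces I would split
\[
e=(u-\widetilde\chi_h)+(\widetilde\chi_h-u_h),
\qquad \widetilde\chi_h\in P_k(G_0,\mathcal T_h)\cap H^1(G_0).
\]
The first piece lies in \(H^1\) and is handled as above. The second piece \(w_h\) is piecewise polynomial, and I would use the broken Poincar\'e inequality
\[
\norm{w_h-c}_{L^2(G_0)}\lesssim d\Bigl(\seminorm{w_h}_{H_h^1(G_0)}+\Bigl(\sum_{E}h^{-1}\norm{[w_h]}_{L^2(E)}^2\Bigr)^{1/2}\Bigr),
\]
together with the boundary-edge analogue in case (ii). Since \(u\) and \(\widetilde\chi_h\) are continuous, \([w_h]=-[u_h]\). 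Since \(u_h\) is continuous at vertices (Assumption~\ref{assumption}), \([u_h]\) vanishes at the endpoints of each edge, so \(\norm{[u_h]}_{L^2(E)}\lesssim h\norm{[\partial_t u_h]}_{L^2(E)}=h\norm{[\partial_t w_h]}_{L^2(E)}\). A trace inequality and an inverse estimate on the polynomial \(w_h\) then bound this by \(h^{1/2}\seminorm{w_h}_{H_h^1}\) on neighbouring elements. Finally, \(\seminorm{w_h}_{H_h^1}\le\seminorm{e}_{H_h^1}+\seminorm{u-\widetilde\chi_h}_{H^1}\).

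\emph{Main obstacle.} The delicate point is the constant \(c\) in case (i) combined with nonconformity. For example, for the Morley element \(e\) may fail to have a well-defined mean compatible with the \(H^{-2}\)-residual unless constants truly lie in \(V_h\). The terms \(h^{j-1}d^{-1}\seminorm{u-\widetilde\chi_h}_{H^j}\), whose scaling is \(d^{-2}\cdot d\cdot h^{-1}\), suggest that the authors instead bound edge jump traces of \(u-\widetilde\chi_h\) using \(\norm{\phi}_{L^2(E)}^2\lesssim h^{-1}\norm{\phi}_{L^2(T)}^2+h\seminorm{\phi}_{H^1(T)}^2\). I would organize the jump estimate that way, applying the trace inequality to \(u-\widetilde\chi_h\) rather than the inverse estimate, so that exactly these terms appear. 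I would then check that nothing of higher order, such as \(\seminorm{e}_{H_h^2(G_0)}\), survives that would require a kickback over a larger patch.
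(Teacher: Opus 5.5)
Your interior-case reduction has an error in how the constant is introduced. You aim to reduce everything to \(d^{-2}\norm{e-c}_{L^2(G_0)}\), but your substitution does not achieve this. Replacing \(v\) by \(v+c\) \emph{and} \(v_h\) by \(v_h+c\) leaves \(v-v_h=e\) unchanged. The \(L^2\)-term therefore stays \(d^{-2}\norm{e}_{L^2(G_0)}\), and the only effect is to worsen the \(j=0\) approximation term. Your fallback, bounding \(|c|\,|G|^{1/2}\) by the right-hand side, is circular: \(c=P^0_{G_0}e\) is the mean of the error, and controlling it is as hard as controlling the \(L^2\)-term itself.

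The repair is what the paper does. Theorem~\ref{theorem local energy estimate} holds for an arbitrary pair \((v,v_h)\), with \(v_h\in V_h\) allowed in the interior case. So keep \(v=u-\chi_h\) and shift only \(v_h=u_h-\chi_h+P^0_{G_0}e\). This lies in \(V_h\) because constants do, the Morley space included. Then \(v-v_h=e-P^0_{G_0}e\) and the approximation terms in \(u-\chi_h\) are untouched. The left-hand side, the \(H^1\)-term and the residual are unchanged because \(D^2c=0\) elementwise. The ``main obstacle'' you describe therefore does not arise.

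With this fix, your Poincar\'e step is correct and takes a different route from the paper's. The paper writes \(e-c=(u-Q_h^ku)+\widetilde e_h\), where \(Q_h^k\) is the mean-preserving \(L^2\)-projection onto continuous piecewise \(P_k\), so \(P^0_{G_0}\widetilde e_h=0\). It then compares \(\widetilde e_h\) with \(Q_h^1\widetilde e_h\), which implicitly uses vertex continuity, e.g. through \(I_h^1\widetilde e_h\). It applies the zero-mean Poincar\'e inequality to the conforming part and removes \(h\seminorm{\widetilde e_h}_{H_h^2}\) by an inverse estimate. The \(j=0\) term arises there from \(\norm{u-Q_h^ku}_{L^2}\) and from \(\seminorm{Q_h^k(u-\widetilde\chi_h)}_{H^1}\lesssim h^{-1}\norm{u-\widetilde\chi_h}_{L^2}\), not from edge traces.

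You instead use a broken Poincar\'e inequality and control the jumps through vertex continuity, a one-dimensional Poincar\'e inequality on each edge, and an inverse trace estimate for \(\widetilde\chi_h-u_h\). This makes the role of vertex continuity explicit. In case (i) it even gives the bound with only \(d^{-1}\seminorm{u-\widetilde\chi_h}_{H^1(G_0)}\). The price is relying on a broken Poincar\'e inequality whose constants are uniform over the patches.

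In case (ii) for the Morley element, take care: \(u_h\) vanishes only at boundary vertices, not along boundary edges, and \(\widetilde\chi_h\) need not vanish on \(\partial\Omega\). The boundary-edge terms must be handled like interior jumps, using \(\norm{u_h}_{L^2(E)}\lesssim h\norm{\partial_t u_h}_{L^2(E)}\). Split \(\partial_t u_h=\partial_t(u_h-\widetilde\chi_h)+\partial_t\widetilde\chi_h\). The last piece is controlled by an inverse estimate on \(E\), the identity \(\norm{\widetilde\chi_h}_{L^2(E)}=\norm{u-\widetilde\chi_h}_{L^2(E)}\), and the trace inequality you quote. This is where the \(h^{-1}d^{-1}\norm{u-\widetilde\chi_h}_{L^2(G_0)}\) term genuinely enters your argument.
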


\begin{proof}
We write \(e:=u-u_h\) and first consider the interior case \(G_0\cap\partial\Omega=\varnothing\). 
By taking \(v=u-\chi_h\) and \(v_h=u_h-\chi_h+P_{G_0}^0 e\) in Theorem \ref{theorem local energy estimate} ($v_h$ belongs to $V_h$), we have \(v-v_h=e-P_{G_0}^0e\) having zero mean over \(G_0\). Note that \(\seminorm{u-u_h}_{H_h^2(U)}=\seminorm{v-v_h}_{H_h^2(U)}\) and 
\(\seminorm{u-u_h}_{H_h^{-2}(U)}=\seminorm{v-v_h}_{H_h^{-2}(U)}\) in view of the definitions in \eqref{eq:broken-Hhm-seminorm} and \eqref{eq:local-negative-seminorm}. Therefore, the estimate \eqref{local energy estimate} in Theorem \ref{theorem local energy estimate} can be written as 
\begin{align}
\notag
        \seminorm{u-u_h}_{H_h^2(U)}
        \lesssim\;&
        \sum_{j=0}^2 d^{j-2}\seminorm{u-\chi_h}_{H_h^j(G)}
        + d^{-1}\seminorm{u-u_h}_{H_h^1(G_0)}
        + d^{-2}\norm{v-v_h}_{L^2(G_0)}
        \\ \label{local energy estimate u-uh}
        &\quad
        + \seminorm{u-u_h}_{H_h^{-2}(G_0)}.
\end{align}
It remains to show how the term \(d^{-2}\norm{v-v_h}_{L^2(G_0)}\) can be bounded by the right-hand side of \eqref{drop L2term}. To this end, we let \(Q_h^k\) be the \(L^2\)-projection onto
\(P_k(G_0,\mathcal{T}_h)\cap H^1(G_0)\), i.e., 
\begin{align}\label{def-Qhk}
    (Q_h^k v,w_h)_{G_0}
    =
    (v,w_h)_{G_0}
    \qquad
    \forall w_h\in P_k(G_0,\mathcal{T}_h)\cap H^1(G_0) ,
\end{align}
and use the triangle inequality 
\[
    \norm{v-v_h}_{L^2(G_0)}
    \le
    \norm{u-Q_h^ku }_{L^2(G_0)}
    +
    \norm{Q_h^ku-u_h-P_{G_0}^0e}_{L^2(G_0)}.
\]

Setting \(\widetilde e_h:=Q_h^k u-u_h-P_{G_0}^0e\) and choosing $w_h=1$ in \eqref{def-Qhk} (the latter gives $P_{G_0}^0Q_h^k v=P_{G_0}^0v$) yields that \(P_{G_0}^0\widetilde e_h=0\). Using the projection \(Q_h^1\), the
Poincar\'e inequality for zero-mean functions, inverse inequalities and $h\lesssim d$, we
obtain
\begin{align*}
    \norm{\widetilde e_h}_{L^2(G_0)}
    &\lesssim
    \norm{\widetilde e_h-Q_h^1\widetilde e_h}_{L^2(G_0)}
    +
    \norm{Q_h^1\widetilde e_h}_{L^2(G_0)}
    \\
    &\lesssim
    h(h+d)\seminorm{\widetilde e_h}_{H_h^2(G_0)}
    +
    d\seminorm{\widetilde e_h}_{H_h^1(G_0)}
    \\
    &\lesssim
    d\Big(
    \seminorm{e}_{H_h^1(G_0)}
    +
    \inf_{\widetilde{\chi}_h\in P_k(G_0,\mathcal{T}_h)\cap H^1(G_0)}
    \sum_{j=0}^1 h^{j-1}
    \seminorm{u-\widetilde{\chi}_h}_{H^j(G_0)}
    \Big).
\end{align*}
Therefore,
\[
    d^{-2}\norm{v-v_h}_{L^2(G_0)}
    \lesssim
    d^{-1}\seminorm{e}_{H_h^1(G_0)}
    +
    d^{-1}
    \inf_{\widetilde{\chi}_h\in P_k(G_0,\mathcal{T}_h)\cap H^1(G_0)}
    \sum_{j=0}^1 h^{j-1}
    \seminorm{u-\widetilde{\chi}_h}_{H^j(G_0)}.
\]
Substituting this result into \eqref{local energy estimate u-uh} yields the desired bound in case (i).

For boundary patches satisfying
\(\operatorname{meas}(\partial G_0\cap\partial\Omega)\simeq d\), take 
$v=u-\chi_h$ and $v_h=u_h-\chi_h.$
Replace $Q_h^k$ and $Q_h^1$ above with $I_h^k$ and $I_h^1$. Since $I_h^1(u-u_h)$ vanishes on the boundary portion, the same estimates follow, in which we apply a Poincar\'e inequality with a homogeneous boundary condition imposed on a portion of the boundary. Substituting this bound into Theorem \ref{theorem local energy estimate}
proves \eqref{drop L2term}.
\end{proof}

\begin{remark}
The elimination of the local \(L^2\)-term is one of the key points of the
analysis.  Estimating this term by a direct duality argument would require
\(H^4\)-regularity of the dual biharmonic problem, which is generally not
available on polygonal domains.  Estimating it by a global lower-order bound
would introduce logarithmic factors.  Corollary \ref{corollary drop L2} avoids
both difficulties and is therefore essential for the sharp pointwise estimates
proved later.
\end{remark}

The next subsection establishes the H\"older estimates for the Green's
function that allow the local bounds above to be summed over dyadic annuli
without producing logarithmic losses.

\subsection{H{\"o}lder estimates for the Green's function}
\label{subsec:holder-green}

The local estimate in Corollary~\ref{corollary drop L2} reduces the
pointwise analysis to estimating finite element errors of regularized Green's
functions on dyadic annuli.  In order to sum the annular contributions without
logarithmic losses, one needs a positive H\"older gain away from the pole of
the Green's function.  This subsection provides precisely this ingredient.

Let \(G(x,\zeta)\) denote the Green's function associated with the clamped
biharmonic operator in \eqref{definition model problem}, or equivalently with
the bilinear form \(a(\cdot,\cdot)\) in
\eqref{eq:continuous-bilinear-form}.  The sign convention for \(G\) is
irrelevant for the estimates below, since only absolute values of derivatives
are used.  The estimates proved here will be used later in two places:
\eqref{holder estimate2} is used to estimate the local approximation term for
the regularized second-derivative Green's function in \eqref{C2sigma}, while
\eqref{holder estimate1} is used in the duality estimate \eqref{psiC2sigma}.

\begin{lemma}[H\"older estimates for derivatives of the Green's function]
\label{lemma holder estimates}
Let \(\Omega\) be a convex polygon.  There exists
\(\sigma\in(0,1)\), depending only on \(\Omega\), such that for all
\(x,y,\zeta\in\Omega\),
\begin{equation}\label{holder estimate1}
    \frac{
    |D_x^2D_{\zeta}G(x,\zeta)
      -D_y^2D_{\zeta}G(y,\zeta)|
    }{|x-y|^{\sigma}}
    \lesssim
    |x-\zeta|^{-1-\sigma}
    +
    |y-\zeta|^{-1-\sigma},
\end{equation}
and
\begin{equation}\label{holder estimate2}
    \frac{
    |D_x^2D_{\zeta}^2G(x,\zeta)
      -D_y^2D_{\zeta}^2G(y,\zeta)|
    }{|x-y|^{\sigma}}
    \lesssim
    |x-\zeta|^{-2-\sigma}
    +
    |y-\zeta|^{-2-\sigma}.
\end{equation}
\end{lemma}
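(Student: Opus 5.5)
We derive both inequalities from the pointwise bounds of Maz'ya and Rossmann \cite{MR1143406} for the clamped biharmonic Green's function on a convex polygon. Their result gives, for some \(\sigma\in(0,1)\) depending only on the interior angles (through the first eigenvalue of the corner operator pencil, which exceeds \(1\) in convex corners), the bound
\[
|D_x^\alpha D_\zeta^\beta G(x,\zeta)|\lesssim |x-\zeta|^{2-|\alpha|-|\beta|}
\]
for \(|\alpha|\le 2+\sigma'\) and \(|\beta|\le 2\), understood in the Hölder sense in \(x\). Equivalently, on balls \(B\) of radius comparable to the distance to \(\zeta\), the function \(x\mapsto D_\zeta^\beta G(x,\zeta)\) has \(C^{2,\sigma}\)-seminorm bounded by \(|x-\zeta|^{-|\beta|-\sigma}\). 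If \cite{MR1143406} supplies only pointwise derivative bounds up to order two in \(x\), we recover the Hölder part by local regularity. For fixed \(\zeta\) and \(\beta\), the function \(w=D_\zeta^\beta G(\cdot,\zeta)\) satisfies \(\Delta^2 w=0\) away from \(\zeta\), together with clamped boundary conditions. We rescale a ball (or boundary-intersecting ball) of radius \(r\simeq|x-\zeta|/2\) to unit size and apply a local \(C^{2,\sigma}\) estimate near straight edges and convex corners. This estimate follows from Kondrat'ev/Maz'ya--Rossmann weighted theory, since the corner singular exponents satisfy \(\operatorname{Re}\lambda>1+\sigma\) for some \(\sigma>0\) in convex corners. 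Scaling then gives
\[
[D^2_x w]_{C^{0,\sigma}(B_{r}(x))}\lesssim r^{-2-\sigma}\,\|w\|_{L^\infty(B_{2r})}\lesssim r^{-2-\sigma}r^{2-|\beta|}=r^{-|\beta|-\sigma},
\]
using the pointwise bound \(|D_\zeta^\beta G|\lesssim |x-\zeta|^{2-|\beta|}\) (up to logarithms when \(|\beta|=2\), which we avoid by subtracting a suitable affine or quadratic correction if needed).

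We then split into two cases. In the first, \(|x-y|\le \tfrac12|x-\zeta|\). Then \(y\) lies in the ball \(B_r(x)\) with \(r=\tfrac12|x-\zeta|\), whose enlargement stays away from \(\zeta\), and the local Hölder estimate gives
\[
|D_x^2D_\zeta^\beta G(x,\zeta)-D_y^2D_\zeta^\beta G(y,\zeta)|\lesssim |x-y|^\sigma |x-\zeta|^{-|\beta|-\sigma}.
\]
With \(|\beta|=1\) this yields \eqref{holder estimate1}, and with \(|\beta|=2\) it yields \eqref{holder estimate2}. In the second case, \(|x-y|>\tfrac12|x-\zeta|\), so \(|y-\zeta|\le 3|x-y|\). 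We use the triangle inequality and the pointwise bounds \(|D_x^2D_\zeta^\beta G(x,\zeta)|\lesssim|x-\zeta|^{-|\beta|}\), which give
\[
|x-\zeta|^{-|\beta|}=|x-\zeta|^{-|\beta|-\sigma}|x-\zeta|^{\sigma}\lesssim |x-\zeta|^{-|\beta|-\sigma}|x-y|^\sigma,
\]
and the analogous inequality for \(y\). Combining the two cases, and symmetrizing in \(x\) and \(y\), gives both estimates.

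The main obstacle is the uniform boundary and corner version of the local \(C^{2,\sigma}\) estimate for \(\Delta^2\) with clamped conditions near vertices, together with the pointwise bound for mixed derivatives \(D_x^2D_\zeta^2G\) of order four. We would take this directly from \cite{MR1143406}, whose estimates are of the form \(|D_x^\alpha D_\zeta^\beta G|\lesssim|x-\zeta|^{2-|\alpha|-|\beta|}\) for \(|\alpha|,|\beta|\le 2\) on convex polygons, including Hölder variants with exponent below \(\lambda_1-1\). We would choose \(\sigma<\min(\lambda_1-1,1)\), where \(\lambda_1\) is the smallest corner exponent. Convexity guarantees \(\lambda_1>1\), which is exactly what keeps \(\sigma>0\).
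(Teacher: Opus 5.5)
Your argument is correct in outline, but its key step differs from the paper's.

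\textbf{Where the two proofs agree.} The regime $|x-y|\gtrsim|x-\zeta|$ is handled exactly as in the paper's Case~1: the triangle inequality plus $|D_x^2D_\zeta^\beta G|\lesssim|x-\zeta|^{-|\beta|}$.

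\textbf{Where they differ.} For $|x-y|\ll|x-\zeta|$, the paper applies the mean value theorem on $[x,y]$. It then bounds $D_x^3D_\zeta^\beta G$ with the vertex-weighted estimates \eqref{pointwise1}--\eqref{pointwise3} using $\gamma=2+\sigma$. Because $D_x^3$ blows up like $\rho_\omega^{\sigma-1}$ at a vertex, this forces the subcases 2(a)--2(c), and in 2(b) a separate direct estimate when $s\ge\frac12\rho_\omega(x)$. You instead use a scale-invariant local $C^{2,\sigma}$ estimate for clamped biharmonic functions up to edges and convex vertices, on a ball of radius $\simeq|x-\zeta|$. This absorbs all of that vertex bookkeeping.

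\textbf{What each route buys.} Both rest on the same spectral input: at convex vertices the leading clamped-plate corner singularity is $C^{2,\sigma}$ for some $\sigma>0$. The paper needs only the explicitly quoted kernel bounds, and it makes $\sigma<c_\Omega-1$ explicit. Your route is shorter but imports an extra black box: weighted H\"older regularity of Kondrat'ev/Maz'ya--Plamenevskii type. That regularity must be uniform over all rescaled geometries, since the vertex may sit anywhere in the ball, so a covering argument is needed. Your opening description of \cite{MR1143406} is also inaccurate: its bounds carry vertex weights, cf.\ \eqref{pointwise2} with $|\alpha|=3$. You do not actually rely on it, though.

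\textbf{Two details need repair.}
\begin{enumerate}
\item With $r=\frac12|x-\zeta|$, the doubled ball $B_{2r}(x)$ reaches $\zeta$. Shrink the constants, e.g.\ split at $|x-y|\le\frac14|x-\zeta|$ and take $r=\frac14|x-\zeta|$.
\item The logarithm in $D_\zeta^\beta G(\cdot,\zeta)$ appears for $|\beta|=1$ as well, not only for $|\beta|=2$. For $\Phi(x)=\frac{1}{8\pi}|x|^2\ln|x|$ one has $|\nabla\Phi(x)|\sim|x|\,|\ln|x||$. So the claimed bound $\|w\|_{L^\infty(B_{2r})}\lesssim r^{2-|\beta|}$ fails for both $|\beta|=1$ and $|\beta|=2$.
\end{enumerate}

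\textbf{Clean fix for the logarithm.} Use the local estimate in the form $r^{\sigma}[D^2w]_{C^{0,\sigma}(B_r(x)\cap\Omega)}\lesssim\|D^2w\|_{L^\infty(B_{2r}(x)\cap\Omega)}$.
\begin{itemize}
\item On interior balls, subtract the affine Taylor polynomial of $w$.
\item On balls meeting $\partial\Omega$, an affine correction would destroy the clamped conditions. Instead use $w=\nabla w=0$ at a boundary point together with convexity of $\Omega\cap B_{2r}(x)$. This gives $\|w\|_{L^\infty}\lesssim r^2\|D^2w\|_{L^\infty}$ there.
\end{itemize}
The same bound $|D_x^2D_\zeta^\beta G|\lesssim|x-\zeta|^{-|\beta|}$ from your second case then closes the argument.
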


\begin{proof}
We prove \eqref{holder estimate2}; the proof of
\eqref{holder estimate1} is identical, with one fewer derivative in the
\(\zeta\)-variable.  Let \(\omega\) be a vertex of \(\Omega\), and set $\rho_\omega(x):=|x-\omega|.$
For simplicity, we consider only the corner singularity at \(\omega\).
The pointwise Green's function estimates of Maz'ya and Rossmann
\cite[p.~286]{MR1143406} imply that, for any multi-indices
\(\alpha,\beta\),
\begin{subequations}\label{eq:Mazya-Rossmann-pointwise}
\begin{align}
|D_x^\alpha D_{\zeta}^\beta G(x,\zeta)|
&\leq
C|x-\zeta|^{2-|\alpha|-|\beta|},
\quad
\begin{gathered}
\rho_\omega(x)/4\leq \rho_\omega(\zeta)\leq 4\rho_\omega(x),\\
3\leq |\alpha|+|\beta|,
\end{gathered}
\label{pointwise1}
\\
|D_x^\alpha D_{\zeta}^\beta G(x,\zeta)|
&\leq
C\rho_\omega(x)^{\gamma-|\alpha|}
 \rho_\omega(\zeta)^{-\gamma+2-|\beta|},
\quad
\begin{gathered}
\rho_\omega(x)\leq \frac34\rho_\omega(\zeta),\\
3\leq |\alpha|+|\beta|,
\end{gathered}
\label{pointwise2}
\\
|D_x^\alpha D_{\zeta}^\beta G(x,\zeta)|
&\leq
C\rho_\omega(x)^{-\gamma+2-|\alpha|}
 \rho_\omega(\zeta)^{\gamma-|\beta|},
\quad
\begin{gathered}
\rho_\omega(\zeta)\leq \frac34\rho_\omega(x),\\
3\leq |\alpha|+|\beta|.
\end{gathered}
\label{pointwise3}
\end{align}
\end{subequations}
Here \(\gamma\in(1-c_\Omega,1+c_\Omega)\), where
\(c_\Omega>1\) depends only on \(\Omega\); see \cite[Lemma~1.2]{MR614817}.  We choose
$0<\sigma<c_\Omega-1,$
so that \(\gamma=2+\sigma\) is admissible in \eqref{pointwise2}.  We shall
also use the local singular estimate
\begin{equation}\label{eq:local-green-singular-bound}
    |D_x^\alpha D_{\zeta}^\beta G(x,\zeta)|
    \lesssim
    |x-\zeta|^{2-|\alpha|-|\beta|},
    \qquad
    1\leq |\alpha|,|\beta|\leq 2,\quad
    3\leq |\alpha|+|\beta|\leq 4,
\end{equation}
which follows, for example, from \cite[Lemma~2.1]{MR4166452}.

Set
\[
    r_x:=|x-\zeta|,
    \qquad
    r_y:=|y-\zeta|,
    \qquad
    s:=|x-y|.
\]
We consider two cases.

\medskip
\noindent{\bf Case 1: \(r_x\leq 6s\).}
Then \(r_y\leq r_x+s\leq 7s\).  Using
\eqref{eq:local-green-singular-bound} with
\(|\alpha|=|\beta|=2\), we get
\[
\begin{aligned}
    \frac{
    |D_x^2D_{\zeta}^2G(x,\zeta)
      -D_y^2D_{\zeta}^2G(y,\zeta)|
    }{s^\sigma}
    &
    \leq
    s^{-\sigma}
    \bigl(
    |D_x^2D_{\zeta}^2G(x,\zeta)|
    +
    |D_y^2D_{\zeta}^2G(y,\zeta)|
    \bigr)
    \\
    &
    \lesssim
    r_x^{-2-\sigma}+r_y^{-2-\sigma}.
\end{aligned}
\]
This proves \eqref{holder estimate2} in Case 1.

\medskip
\noindent{\bf Case 2: \(r_x>6s\).}
For every point \(z_t:=tx+(1-t)y\), \(0\leq t\leq 1\), convexity of
\(\Omega\) gives \(z_t\in\Omega\), and the triangle inequality gives
\begin{equation}\label{eq:segment-distance-from-pole}
    |z_t-\zeta|
    \geq
    |x-\zeta|-|x-z_t|
    \geq
    \frac56 |x-\zeta|
    =
    \frac56 r_x.
\end{equation}
Moreover, \(r_y\simeq r_x\).  By the mean value theorem,
\begin{equation}\label{eq:mvt-holder-green}
\frac{
|D_x^2D_{\zeta}^2G(x,\zeta)
  -D_y^2D_{\zeta}^2G(y,\zeta)|
}{s^\sigma}
\leq
|D_{z_t}^3D_{\zeta}^2G(z_t,\zeta)|\,s^{1-\sigma}
\end{equation}
for some \(t\in[0,1]\).  It remains to bound the right-hand side of
\eqref{eq:mvt-holder-green}.  We distinguish three subcases according to the
relative sizes of \(\rho_\omega(x)\) and \(\rho_\omega(\zeta)\).

\medskip
\noindent{\bf Subcase 2(a):}
$\frac12\rho_\omega(x)\leq \rho_\omega(\zeta)
    \leq 2\rho_\omega(x).$
Using \eqref{eq:segment-distance-from-pole}, the above comparability implies
\[
    \frac14\rho_\omega(\zeta)
    \leq
    \rho_\omega(z_t)
    \leq
    \frac{5}{2}\rho_\omega(\zeta).
\]
Therefore \eqref{pointwise1}, with \(|\alpha|=3\) and
\(|\beta|=2\), yields    $|D_{z_t}^3D_{\zeta}^2G(z_t,\zeta)|
    \lesssim
    |z_t-\zeta|^{-3}$.
Combining this estimate with \eqref{eq:segment-distance-from-pole} and
\(s<r_x/6\), we obtain
\[
    |D_{z_t}^3D_{\zeta}^2G(z_t,\zeta)|\,s^{1-\sigma}
    \lesssim
    r_x^{-3}r_x^{1-\sigma}
    =
    r_x^{-2-\sigma}.
\]

\medskip
\noindent{\bf Subcase 2(b):}
$2\rho_\omega(x)<\rho_\omega(\zeta).$
Then
\begin{equation}\label{eq:rho-zeta-comparable-to-rx}
    \frac12\rho_\omega(\zeta)
    \leq
    r_x
    \leq
    \frac32\rho_\omega(\zeta)
\end{equation}
and $\rho_\omega(y)\leq \rho_\omega(x)+s\leq \frac34 \rho_\omega(\zeta)$. 
If \(s\geq \frac12\rho_\omega(x)\), then
\(\rho_\omega(y)\leq \rho_\omega(x)+s\lesssim s\).  Hence, using
\eqref{pointwise2} with \(|\alpha|=|\beta|=2\) and
\(\gamma=2+\sigma\),
\[
\begin{aligned}
    &\frac{
    |D_x^2D_{\zeta}^2G(x,\zeta)
      -D_y^2D_{\zeta}^2G(y,\zeta)|
    }{s^\sigma}
    \lesssim
    \left[
    \left(\frac{\rho_\omega(x)}{s}\right)^\sigma
    +
    \left(\frac{\rho_\omega(y)}{s}\right)^\sigma
    \right]
    \rho_\omega(\zeta)^{-2-\sigma}
    \lesssim
    r_x^{-2-\sigma},
\end{aligned}
\]
where the last step uses \eqref{eq:rho-zeta-comparable-to-rx}.

If  \(s<\frac12\rho_\omega(x)\), then
\(\rho_\omega(z_t)\simeq\rho_\omega(x)\) and
\(\rho_\omega(z_t)\leq \rho_\omega(\zeta)\).  Applying
\eqref{pointwise2} with \(|\alpha|=3\), \(|\beta|=2\), and
\(\gamma=2+\sigma\), we obtain
\[
\begin{aligned}
    |D_{z_t}^3D_{\zeta}^2G(z_t,\zeta)|\,s^{1-\sigma}
    &\lesssim
    \rho_\omega(z_t)^{\sigma-1}
    \rho_\omega(\zeta)^{-2-\sigma}
    s^{1-\sigma}
\lesssim
    \rho_\omega(\zeta)^{-2-\sigma}
    \lesssim
    r_x^{-2-\sigma}.
\end{aligned}
\]

\medskip
\noindent{\bf Subcase 2(c):}
$2\rho_\omega(\zeta)<\rho_\omega(x).$
Then \(\frac12\rho_\omega(x)\leq r_x\leq \frac32\rho_\omega(x)\).  Since \(s<r_x/6\), we have
\(\rho_\omega(z_t)\geq \rho_\omega(x)-s \geq \frac34 \rho_\omega(x)\), and hence
\(\rho_\omega(\zeta)\leq \rho_\omega(z_t)\).  Applying
\eqref{pointwise3} with \(|\alpha|=3\), \(|\beta|=2\), and
\(\gamma=2\), we get
\[
\begin{aligned}
    |D_{z_t}^3D_{\zeta}^2G(z_t,\zeta)|\,s^{1-\sigma}
    &\lesssim
    \rho_\omega(z_t)^{-3}s^{1-\sigma}
    \lesssim
    r_x^{-3}r_x^{1-\sigma}
    =
    r_x^{-2-\sigma}.
\end{aligned}
\]
This completes the proof of \eqref{holder estimate2}.  Since
\(r_y\simeq r_x\) in Case 2, the right-hand side is bounded by
$
    |x-\zeta|^{-2-\sigma}
    +
    |y-\zeta|^{-2-\sigma}.
$

Finally, the proof of \eqref{holder estimate1} follows from the same argument,
with \(|\beta|=1\) in place of \(|\beta|=2\).  The powers of
\(|x-\zeta|\) and \(|y-\zeta|\) are then reduced by one, giving
\eqref{holder estimate1}.
\end{proof}

\subsection{Estimates for regularized Green's functions}
\label{subsec:regularized-green}

The H\"older estimates of Lemma~\ref{lemma holder estimates} will be combined
with the local energy estimate \eqref{drop L2term} in the dyadic argument of
Section~\ref{sec:pointwise-estimates}.  Before doing so, we need global \(W^{m,1}\)-bounds for the
regularized Green's functions.  These estimates quantify the size of the
regularized singularities at the mesh scale \(h\), and they are used later in
the proofs of Theorems~\ref{theorem conforming main results} and
\ref{theorem nonconforming main results}.

Fix \(z\in\overline{\Omega}\), and let \(T_z\in\mathcal{T}_h\) be such that
\(z\in\overline{T_z}\).  There exists a regularized delta function
\(\delta=\delta_z\in C_0^\infty(T_z)\) such that
\begin{subequations}\label{eq:regularized-delta-properties}
\begin{align}
    (v,\delta_z)_{T_z}
    &=v(z)
    \qquad
    \forall v\in P_k(T_z),
    \label{eq:delta-reproducing-property}
    \\    \norm{D^m\delta_z}_{L^p(T_z)}
    &\lesssim
    h^{-m-2(1-1/p)},
    \qquad 1\leq p\leq\infty,\quad m=0,1,2,\ldots.
    \label{eq:delta-Lp-scaling}
\end{align}
\end{subequations}
The integer \(k\) in the reproducing property is chosen large enough so that
all elementwise polynomials whose point values are used below are reproduced.
In particular, if the local finite element degree is $\tilde{k}$, then we take
$k=\tilde{k}$.  Point values of derivatives on interelement edges are always
understood elementwise; this is sufficient because all \(W^{m,\infty}\)-norms
below are broken norms.

Here and below the hidden constants are independent of \(z\) and \(h\).  The
first property is the usual polynomial reproducing property of the
regularized delta function; see, for example, \cite{MR1388486}.  The scaling
bounds \eqref{eq:delta-Lp-scaling} will be
used repeatedly below.

For a multi-index \(\alpha\) with \(|\alpha|=i\), define
\(g_\alpha=g_\alpha^z\in H_0^2(\Omega)\) by
\begin{equation}\label{definition of regularized Green's functions}
    a(g_\alpha,v)
    =
    (D^\alpha v,\delta_z)
    =
    (-1)^{|\alpha|}(D^\alpha\delta_z,v)
    \qquad \forall v\in H_0^2(\Omega).
\end{equation}
Equivalently, in the distributional sense,
\begin{equation}\label{eq:correct-green-strong-form}
    \Delta^2 g_\alpha
    =
    (-1)^{|\alpha|}D^\alpha\delta_z
    \quad\text{in }\Omega,\qquad
    g_\alpha=\frac{\partial g_\alpha}{\partial n}=0
    \quad\text{on }\partial\Omega .
\end{equation}
When only the order of the derivative is relevant, we write \(g_i\) instead of
\(g_\alpha\).

\begin{lemma}[A priori estimates for regularized Green's functions]
\label{lemma a priori estimates for gi}
Let \(\Omega\) be a convex polygon.  The regularized Green's functions defined
by \eqref{definition of regularized Green's functions} satisfy
\begin{subequations}\label{eq:regularized-green-apriori-estimates}
\begin{align}
\label{g1 W41}
    \seminorm{g_1}_{W^{4,1}(\Omega)}
    &\lesssim h^{-1},
\\
\label{g0 W41}
    \seminorm{g_0}_{W^{4,1}(\Omega)}
    &\lesssim |\ln h|,
\\
\label{g2 W31}
    \seminorm{g_2}_{W^{3,1}(\Omega)}
    &\lesssim h^{-1},
\\
\label{g1 W31}
    \seminorm{g_1}_{W^{3,1}(\Omega)}
    &\lesssim |\ln h|.
\end{align}
\end{subequations}
\end{lemma}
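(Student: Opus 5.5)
The plan is to represent each regularized Green's function through the continuous Green's function and to estimate the higher derivatives by a dyadic decomposition around the pole $z$. Concretely, $g_\alpha(x)=(-1)^{|\alpha|}\int_{T_z} D_\zeta^\alpha G(x,\zeta)\delta_z(\zeta)\,d\zeta$ after integrating by parts in $\zeta$ (or, up to sign, $g_\alpha(x)=\int D^\alpha_\zeta G(x,\zeta)\delta_z(\zeta)\,d\zeta$). Away from $T_z$ we differentiate under the integral and use the pointwise bounds. In the annuli $A_j=\{x\in\Omega: 2^{j}h\le |x-z|< 2^{j+1}h\}$ with $2^jh\ge \kappa h$ we have $|x-\zeta|\simeq 2^jh$ for $\zeta\in T_z$. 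We also set $A_*=\{|x-z|<\kappa h\}$.

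\textbf{Far field.} For the $W^{3,1}$ bounds, \eqref{eq:local-green-singular-bound} (extended to $|\alpha|=3$, $|\beta|\le 2$ via \eqref{eq:Mazya-Rossmann-pointwise} with an admissible $\gamma$, exactly as in Lemma~\ref{lemma holder estimates}) gives
\[
|D_x^3g_i(x)|\lesssim\int_{T_z}|D_x^3D_\zeta^iG(x,\zeta)||\delta_z|\,d\zeta\lesssim |x-z|^{-1-i}.
\]
Hence $\int_{A_j}|D^3g_2|\lesssim (2^jh)^{-3}(2^jh)^2=(2^jh)^{-1}$. Summing this geometric series gives $h^{-1}$. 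For $g_1$ each annulus contributes $O(1)$, and there are $O(|\ln h|)$ annuli. The $W^{4,1}$ bounds are analogous with one more $x$-derivative: $|D^4_xg_1|\lesssim|x-z|^{-3}$ gives $h^{-1}$, and $|D_x^4g_0|\lesssim|x-z|^{-2}$ gives $|\ln h|$. The care needed near corners is that fourth derivatives in $x$ are not uniformly bounded up to the vertices. Instead of pointwise bounds, I would use \eqref{pointwise1}–\eqref{pointwise3} with $\gamma$ chosen close to $1$ and integrate $\rho_\omega(x)^{\gamma-4}$ over the region $\rho_\omega(x)\le\frac34\rho_\omega(\zeta)$. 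This is integrable in two dimensions as long as $\gamma>2$, which is admissible since $c_\Omega>1$. This is the same trick as in Lemma~\ref{lemma holder estimates}, and it keeps each annular contribution at the stated scale.

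\textbf{Near field.} On $A_*$ (area $\simeq h^2$) I would use global regularity rather than pointwise bounds. By the $W^{4,p}$-regularity of Appendix~\ref{supp:regularity} for some $p>1$ close to $1$ (or $W^{3,p}$ for $p$ close to $2$ on a convex polygon), combined with \eqref{eq:delta-Lp-scaling}, we get $|g_i|_{W^{4,p}}\lesssim\norm{D^i\delta_z}_{L^p}\lesssim h^{-i-2(1-1/p)}$. By Hölder's inequality,
\[
\int_{A_*}|D^4g_i|\lesssim h^{2(1-1/p)}h^{-i-2(1-1/p)}=h^{-i}.
\]
This gives $h^{-1}$ for $g_1$ and $O(1)$ for $g_0$. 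The $W^{3,1}$ case is identical, using $|g_i|_{W^{3,p}}\lesssim\norm{D^i\delta_z}_{W^{-1,p}}\lesssim h^{-i+1-2(1-1/p)}$, which gives $h^{-1}$ for $g_2$ and $O(1)$ for $g_1$.

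\textbf{Main obstacle.} The delicate point is justifying the far-field derivative bounds when $x$ or $z$ lies near a corner. This amounts to integrating the Maz'ya–Rossmann weighted bounds over annuli intersected with corner neighbourhoods, where both the distance to the pole and the distance to the vertex matter. I expect this to need the same three-way split $\rho_\omega(x)\lessgtr\rho_\omega(\zeta)$ as in Lemma~\ref{lemma holder estimates}, with the exponent $\gamma$ chosen in each subcase so that the weighted integrals converge to the natural scale.
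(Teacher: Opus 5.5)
\paragraph{Comparison with the paper.} Your treatment of the pole is the paper's. On $B(z,\kappa h)$ you use global $W^{4,p}$ regularity ($p$ near $1$), resp.\ $H^3$ regularity, plus H\"older on a set of area $O(h^2)$. Away from $z$ you use the Green's function representation with $|x-z|^{-k}$ decay.

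The real difference is at the vertices. The paper adds the balls $B(\omega,d_0)$ to the near-singular set $\Omega_0$ in \eqref{eq:near-singular-region}. This costs nothing, since the regularity argument only needs $|\Omega_0|\lesssim h^2$. Outside $\Omega_0$ it uses the crude bounds $|x-z|^{-k}+\sum_\omega|x-\omega|^{-k}$, which follow from \eqref{pointwise1}--\eqref{pointwise3} with moderate $\gamma\in[0,2]$ and integrate directly to $h^{-1}$ or $|\ln h|$.

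You instead integrate the weighted bound $\rho_\omega(x)^{\gamma-|\alpha|}\rho_\omega(\zeta)^{2-\gamma-|\beta|}$ all the way into the vertex. This also works. For $|\alpha|=4$ it needs $\gamma>2$, which is admissible because $c_\Omega>1$; your phrase ``$\gamma$ chosen close to $1$'' contradicts this and should be dropped. The region $\rho_\omega(x)\le\frac34\rho_\omega(\zeta)$ lies inside $B(z,\kappa h)$ unless $\rho_\omega(z)\gtrsim\kappa h$, in which case $\rho_\omega(\zeta)\simeq\rho_\omega(z)$ on $T_z$. It then contributes $\simeq\rho_\omega(z)^{-|\beta|}\lesssim h^{-|\beta|}$, as required. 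Your route gives a finer picture of the corner contributions. The paper's is shorter and avoids the three-way case analysis you anticipate.

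\paragraph{A step that is wrong as written.} In the far-field $W^{3,1}$ argument you claim the pointwise bound $|D^3g_i(x)|\lesssim|x-z|^{-1-i}$ by extending \eqref{eq:local-green-singular-bound} to $|\alpha|=3$. You also imply that only fourth derivatives degenerate at vertices. This extension is false in general.

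By \eqref{pointwise2}, a bound on $D_x^3G(\cdot,\zeta)$ uniform up to a vertex would need $\gamma\ge3$, i.e.\ $c_\Omega\ge 2$. Only $c_\Omega>1$ is available, and for an angle close to $\pi$ the leading corner exponent makes $c_\Omega$ close to $1$. This is consistent with the mere $W^{3,2+\varepsilon}$ regularity quoted in the introduction. So $D_x^3G$ genuinely blows up at such vertices.

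Note that \eqref{eq:local-green-singular-bound} is stated only for $|\alpha|\le 2$. Lemma~\ref{lemma holder estimates} does not supply the bound either: there the factor $\rho_\omega(z_t)^{\sigma-1}$ is only compensated by $s^{1-\sigma}$ with $s<\frac12\rho_\omega(x)$.

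The conclusion survives if you treat third derivatives exactly as you treat fourth ones. Integrate $\rho_\omega(x)^{\gamma-3}\rho_\omega(\zeta)^{2-\gamma-i}$ with $1<\gamma<1+c_\Omega$ over $\rho_\omega(x)\le\frac34\rho_\omega(\zeta)$. This gives a vertex contribution $\simeq\rho_\omega(z)^{1-i}$, i.e.\ $\lesssim h^{-1}$ for $g_2$ and $O(1)$ for $g_1$. Alternatively, adopt the paper's excision of $B(\omega,d_0)$ and use the bound $|x-z|^{-1-i}+\sum_\omega|x-\omega|^{-1-i}$.
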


\begin{proof}
We give the details for \eqref{g1 W41}; the other estimates follow from the
same decomposition, and the changes are summarized at the end of the proof.

Let \(\mathcal{V}\) be the set of vertices of \(\Omega\).  Choose $
    d_0:=\kappa h,\ 
    d_j:=2^j d_0,\ j=0,1,\ldots,J,$
where \(\kappa\) is the constant used in the local estimates and \(J\) is
chosen such that
$\operatorname{diam}(\Omega)\leq d_J<2\operatorname{diam}(\Omega).$
Define the near-singular region
\begin{equation}\label{eq:near-singular-region}
    \Omega_0
    :=
    \Omega\cap \big(B(z,d_0)
    \cup
    \bigcup_{\omega\in\mathcal{V}} B(\omega,d_0)\big),
\end{equation}
and, for \(j\geq1\), the dyadic annuli centered at \(z\),
\begin{equation}\label{eq:dyadic-annuli-z}
    A_j(z)
    :=
    \{x\in\Omega:\ d_{j-1}<|x-z|\leq d_j\}.
\end{equation}
Since \(\Omega\) has only finitely many vertices, all estimates below are
uniform after summing over \(\omega\in\mathcal{V}\).

We split
\begin{equation}\label{eq:g1-W41-near-far-split}
    \seminorm{g_1}_{W^{4,1}(\Omega)}
    \leq
    \seminorm{g_1}_{W^{4,1}(\Omega_0)}
    +
    \seminorm{g_1}_{W^{4,1}(\Omega\setminus\Omega_0)}
    =:B_1+B_2.
\end{equation}

For the near-field term \(B_1\), we use the regularity estimate for the
clamped biharmonic problem on convex polygons (see Appendix~\ref{supp:regularity} for a detailed proof): there exists \(p_0>1\),
depending only on \(\Omega\), such that
\begin{equation}\label{W4p regulatity}
    \seminorm{w}_{W^{4,p}(\Omega)}
    \lesssim
    \norm{F}_{L^p(\Omega)}
    \qquad
    \forall\,1<p<p_0,
\end{equation}
whenever \(w\) solves the clamped biharmonic problem with right-hand side
\(F\).  Fix \(1<p<p_0\).  Since
\(|\Omega_0|\lesssim h^2\), \eqref{definition of regularized Green's functions},
\eqref{W4p regulatity}, and \eqref{eq:delta-Lp-scaling} imply
\begin{equation}\label{eq:g1-W41-near-estimate}
\begin{aligned}
    B_1
    &\lesssim
    |\Omega_0|^{1-1/p}
    \seminorm{g_1}_{W^{4,p}(\Omega)}
    \lesssim
    h^{2-2/p}
    \norm{D\delta_z}_{L^p(T_z)}
    \lesssim
    h^{2-2/p} h^{-3+2/p}
    \lesssim h^{-1}.
\end{aligned}
\end{equation}

We next estimate the far-field term \(B_2\).  Using the representation of
\(g_1\) by the Green's function, integrating by parts in the \(\zeta\)-variable
so that the derivative falls on \(G\), and using
\eqref{eq:delta-Lp-scaling}, we obtain from
\eqref{pointwise1}--\eqref{pointwise3} the pointwise bound
\begin{equation}\label{eq:g1-far-kernel-bound}
    |D^4 g_1(x)|
    \lesssim
    |x-z|^{-3}
    +
    \sum_{\omega\in\mathcal{V}}
    |x-\omega|^{-3},
    \qquad x\in\Omega\setminus\Omega_0.
\end{equation}
Consequently,
\begin{equation}\label{eq:g1-W41-far-estimate}
\begin{aligned}
    B_2
    \lesssim
    \int_{\Omega\setminus\Omega_0}
    \left(
    |x-z|^{-3}
    +
    \sum_{\omega\in\mathcal{V}} |x-\omega|^{-3}
    \right)\,\mathrm{d}x
    \lesssim
    \int_{\kappa h}^{\operatorname{diam}(\Omega)}
    r^{-3}\,r\,\mathrm{d}r
    \lesssim h^{-1}.
\end{aligned}
\end{equation}
Combining \eqref{eq:g1-W41-near-far-split},
\eqref{eq:g1-W41-near-estimate}, and
\eqref{eq:g1-W41-far-estimate} proves \eqref{g1 W41}.

It remains to indicate the corresponding estimates for \(g_0\), \(g_2\), and
the \(W^{3,1}\)-seminorm of \(g_1\).  We use, in addition to
\eqref{W4p regulatity}, the lower-order consequence (see \cite[Corollary 7.3.2.5]{MR775683})
\begin{equation}\label{eq:W3p-negative-regularity}
    \seminorm{w}_{H^3(\Omega)}
    \lesssim
    \norm{F}_{H^{-1}(\Omega)}
\end{equation}
for the same clamped biharmonic problem.

For the near-field part, \eqref{W4p regulatity},
\eqref{eq:W3p-negative-regularity}, and
\eqref{eq:delta-Lp-scaling} give
\begin{subequations}\label{eq:near-field-other-gi}
\begin{align}
    \seminorm{g_0}_{W^{4,1}(\Omega_0)}
    &\lesssim
    h^{2-2/p}\norm{\delta_z}_{L^p(T_z)}
    \lesssim 1,
    \label{eq:g0-W41-near}
    \\
    \seminorm{g_2}_{W^{3,1}(\Omega_0)}
    &\lesssim
    h
    \norm{D^2\delta_z}_{H^{-1}(\Omega)}
    \lesssim
    h\norm{D\delta_z}_{L^2(T_z)}
    \lesssim h^{-1},
    \label{eq:g2-W31-near}
    \\
    \seminorm{g_1}_{W^{3,1}(\Omega_0)}
    &\lesssim
    h
    \norm{D\delta_z}_{H^{-1}(\Omega)}
    \lesssim
    h\norm{\delta_z}_{L^2(T_z)}
    \lesssim 1.
    \label{eq:g1-W31-near}
\end{align}
\end{subequations}

For the far-field part, the same Green's function representation and
\eqref{pointwise1}--\eqref{pointwise3} yield
\begin{equation}\label{eq:far-field-kernel-orders}
\begin{array}{c|c|c}
\text{quantity}
&
\text{far-field pointwise bound}
&
\text{integral over }\Omega\setminus\Omega_0
\\ \hline
|D^4g_0(x)|
&
|x-z|^{-2}+\displaystyle\sum_{\omega\in\mathcal{V}}|x-\omega|^{-2}
&
O(|\ln h|)
\\[2mm]
|D^3g_2(x)|
&
|x-z|^{-3}+\displaystyle\sum_{\omega\in\mathcal{V}}|x-\omega|^{-3}
&
O(h^{-1})
\\[2mm]
|D^3g_1(x)|
&
|x-z|^{-2}+\displaystyle\sum_{\omega\in\mathcal{V}}|x-\omega|^{-2}
&
O(|\ln h|).
\end{array}
\end{equation}
Indeed,
\[
    \int_{\kappa h}^{\operatorname{diam}(\Omega)} r^{-2}r\,\mathrm{d}r
    \lesssim |\ln h|,
    \qquad
    \int_{\kappa h}^{\operatorname{diam}(\Omega)} r^{-3}r\,\mathrm{d}r
    \lesssim h^{-1}.
\]
Combining the near-field bounds \eqref{eq:near-field-other-gi} with the
far-field bounds \eqref{eq:far-field-kernel-orders} proves
\eqref{g0 W41}, \eqref{g2 W31}, and \eqref{g1 W31}.  The proof is complete.
\end{proof}

\section{Pointwise error estimates}
\label{sec:pointwise-estimates}

We now combine the local error estimate \eqref{drop L2term} with the H\"older
estimates \eqref{holder estimate1} and \eqref{holder estimate2}.  We also use
the regularized Green's function bounds \eqref{g1 W41}--\eqref{g1 W31}.
The main technical step is a uniform
\(W^{2,1}\)-estimate for the finite element error of the regularized
second-derivative Green's function.

For \(z\in\overline{\Omega}\) and $ |\alpha|=2$, let \(g_\alpha =g_\alpha^z\) be defined by
\eqref{definition of regularized Green's functions} and let
\(g_{\alpha,h}=g_{\alpha,h}^z\in\mathring{V}_h\) be its finite element approximation
defined by 
\begin{equation}\label{eq:correct-discrete-green}
    a_h(g_{\alpha,h},v_h)
    =
    (-1)^{|\alpha|}(D^\alpha\delta_z,v_h)
    \qquad \forall v_h\in\mathring V_h .
\end{equation}

Before giving the finite element error of $g_{\alpha}-g_{\alpha,h}$, we shall state the consistency error estimate for the Morley element; see \cite{Z.Shi1}, which will be used repeatedly in the subsequent analysis. As the result will be used later in slightly different forms, we include a detailed proof for completeness, which may be found in Appendix~\ref{supp:Morley}.

\begin{lemma}[Morley consistency estimate]\label{lemma Morley consistency estimate}
Let $V_h$ denote the Morley element space. Let $u$ solve \eqref{eq:continuous-weak-problem}. If $f\in L^{p}(\Omega)$  and $u\in W^{3,p}(\Omega)$, for any $w\in V+\mathring{V}_h$ and $D\subset\Omega$, it holds that
\begin{align}
\begin{aligned}\label{split-version Morley consistency estimate}
|a_h(u,w)-(f,w)|
&\lesssim  |(f,w-I_h^1w)|+h\seminorm{u}_{W^{3,p}(D\cap \Omega_w)}\seminorm{w}_{W_h^{2,p'}(D\cap \Omega_w)}\\
&\ \ \ \ +h^{\sigma}\seminorm{u}_{C^{2,\sigma}(\Omega_w\backslash D)}\seminorm{w}_{W^{2,1}_h(\Omega_w\backslash D)},   
\end{aligned}
\end{align}
where $1\leq p\leq \infty$, $\Omega_w:= \operatorname{supp }w$ and \(\sigma\in(0,1)\) is the exponent from
Lemma~\ref{lemma holder estimates}.
\end{lemma}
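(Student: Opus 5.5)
The plan is the classical Shi-type consistency argument: integrate by parts elementwise and exploit the weak continuity of Morley functions across edges. The difference from the classical proof is that the edge terms are estimated separately on $D$ and on its complement, with different norms. Write $w$ for a function in $V+\mathring V_h$. Since $w$ vanishes outside $\Omega_w$, all integrals below live on $\Omega_w$, and each element is assigned to $D$ or to $\Omega_w\setminus D$; I treat elements cut by $\partial D$ as belonging to $D$ if they meet $D$, which is harmless under the patch convention.

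\textbf{Step 1 (integration by parts).} Since $\Delta^2u=f$ in the distributional sense and $u\in W^{3,p}$, I write
\[
a_h(u,w)-(f,w)=a_h(u,w-I_h^1w)-(f,w-I_h^1w)+\bigl[a_h(u,I_h^1w)-(f,I_h^1w)\bigr].
\]
Here $I_h^1w\in H_0^1$ is piecewise linear, so $D^2(I_h^1w)=0$ elementwise and $a_h(u,I_h^1w)=0$. For the bracket I integrate by parts once: $(f,I_h^1w)=-(\nabla\Delta u,\nabla I_h^1w)$ holds in the $H^{-1}$ pairing. To avoid this detour I would rather keep $(f,w-I_h^1w)$ as the isolated first term. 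The remainder is then
\[
(f,I_h^1 w)=-\sum_T\int_T\nabla\Delta u\cdot\nabla I_h^1w .
\]
Elementwise Green's formula gives
\[
\sum_T\int_T D^2u:D^2w=\sum_T\Bigl(-\int_T\nabla\Delta u\cdot\nabla w+\int_{\partial T}(D^2u\,n)\cdot\nabla w\Bigr),
\]
using $\operatorname{div}D^2u=\nabla\Delta u$. Hence
\[
a_h(u,w)-(f,w)=-(f,w-I_h^1w)+\sum_T\int_T\nabla\Delta u\cdot\nabla(I_h^1w-w)+\sum_T\int_{\partial T}(D^2u\,n)\cdot\nabla w .
\]

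\textbf{Step 2 (volume term).} The middle term is bounded by $\|D^3u\|_{L^p(T)}\,h\,|w|_{W^{2,p'}(T)}$ on elements in $D$. On elements outside $D$, however, only $C^{2,\sigma}$ control of $u$ is available. I therefore plan to avoid the volume term there by integrating back: on $T\subset\Omega_w\setminus D$, I rewrite $\int_T\nabla\Delta u\cdot\nabla(I_h^1w-w)$ as $\int_{\partial T}\Delta u\,\partial_n(I_h^1w-w)-\int_T\Delta u\,\Delta(I_h^1w-w)$. Both pieces are then treated together with the edge terms by subtracting elementwise constants $P^0_T D^2u$. The key fact is that constant tensors $M$ satisfy the identity $\int_T M:D^2v=\int_{\partial T}(Mn)\cdot\nabla v$.

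\textbf{Step 3 (edge terms).} For each interior edge $E$, the Morley degrees of freedom give $\int_E[\partial_n w]=0$. Together with vertex continuity, which gives $\int_E[\partial_t w]=0$, this yields $\int_E[\nabla w]=0$; boundary edges are handled the same way using the clamped degrees of freedom. I can therefore subtract the edge mean $P^0_E(D^2u\,n)$ and the edge mean of $\nabla w$. On elements in $D$, the trace inequality and Poincaré give the bound
\[
\|D^2u-P^0D^2u\|_{L^p(E)}\,\|\nabla w-P^0_E\nabla w\|_{L^{p'}(E)}\lesssim h^{1-1/p}|u|_{W^{3,p}(T)}\,h^{1-1/p'}|w|_{W^{2,p'}(T)}\,h^{-1}.
\]
This is $h\,|u|_{W^{3,p}}|w|_{W^{2,p'}}$, and summing by Hölder gives the middle term of \eqref{split-version Morley consistency estimate}. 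On elements outside $D$, I instead use $|D^2u-P^0_ED^2u|\le h^\sigma|u|_{C^{2,\sigma}}$ pointwise together with $\|\nabla w-P^0_E\nabla w\|_{L^1(E)}\lesssim |w|_{W^{2,1}(T)}$, which follows from the $L^1$ trace inequality and Poincaré. This gives the last term.

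\textbf{Main obstacle.} The main obstacle is bookkeeping at the interface, where an edge has one neighbor in $D$ and one outside. Such an edge should be assigned to the side whose norm controls it: the jump of $D^2u$ is not involved, since $u$ is smooth, and one uses the trace from either element. A further subtlety is ensuring that the $f$-term appears exactly as $(f,w-I_h^1w)$; if the rearrangement in Step 1 is awkward, I would instead subtract the piecewise-constant $\Delta$-free identity.
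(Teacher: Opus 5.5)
Your argument is correct and is essentially the paper's proof. You rederive Shi's identity (the volume term $\sum_T\int_T\nabla\Delta u\cdot\nabla(I_h^1w-w)$ plus the edge terms $Q_3+Q_4=\sum_T\int_{\partial T}(D^2u\,n)\cdot\nabla w$), bound the volume term directly on $D$ and after one more integration by parts off $D$ (you subtract $P^0_T\Delta u$ where the paper subtracts $I_h^2u$), and treat the edge terms through the Morley weak continuity $\int_E[\nabla w]=0$ with edge-mean subtraction.

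Two small corrections. First, the factor $h^{-1}$ in your Step~3 display is spurious: $h^{1-1/p}h^{1-1/p'}$ already equals $h$. Second, no edge needs to be assigned to one side at the $D$ interface. After subtracting the common mean $P^0_E(D^2u\,n)$, each one-sided term $\int_E\bigl((D^2u-P^0_ED^2u)n_T\bigr)\cdot\bigl(\nabla w|_T-P^0_E\nabla w|_T\bigr)$ is controlled by the norms on its own element $T$ alone.
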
 

\begin{theorem}\label{theorem g-gh} Let \(V_h\) be either a conforming finite element space
or the Morley finite element space. Under Assumption~\ref{assumption}, the following holds uniformly with
respect to \(z\in\overline{\Omega}\) and \(h\):
\begin{equation}\label{g2-g2h}
    \seminorm{g_\alpha - g_{\alpha,h}}_{W_h^{2,1}(\Omega)}
    \lesssim 1 \quad\mbox{for}\,\,\, |\alpha|=2 .
\end{equation}

\end{theorem}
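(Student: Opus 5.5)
The plan is the standard dyadic decomposition argument. Set \(e:=g_\alpha-g_{\alpha,h}\) with \(d_j=2^j\kappa h\). Let \(\Omega_0\) be the near-field region from the proof of Lemma~\ref{lemma a priori estimates for gi}, and let \(\Omega_j:=A_j(z)\) be the annuli defined there, with \(\Omega_j'\subset\Omega_j''\) the nested enlargements required by Theorem~\ref{theorem local energy estimate}. By Cauchy--Schwarz,
\[
\seminorm{e}_{W_h^{2,1}(\Omega)}\lesssim h\,\seminorm{e}_{H_h^2(\Omega_0)}+\sum_{j\ge1} d_j\,\seminorm{e}_{H_h^2(\Omega_j)}=:I_0+\sum_j d_j E_j .
\]

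\textbf{Near field.} Use the global energy estimate. For the conforming case, \(\seminorm{e}_{H^2}\lesssim \seminorm{g_\alpha-\Pi_hg_\alpha}_{H^2}\lesssim h\seminorm{g_\alpha}_{H^3}\lesssim h\norm{D^2\delta_z}_{H^{-1}}\lesssim h^{-1}\), by \eqref{eq:W3p-negative-regularity}. For Morley, the same bound follows from Strang's lemma together with Lemma~\ref{lemma Morley consistency estimate}. Hence \(I_0\lesssim1\).

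\textbf{Far field.} On each annulus apply Corollary~\ref{corollary drop L2}; annuli meeting \(\partial\Omega\) are covered by finitely many patches satisfying (i) or (ii). This gives
\[
E_j\lesssim \sum_{i\le2} d_j^{i-2}\seminorm{g_\alpha-\Pi_h g_\alpha}_{H_h^i(\Omega_j'')}+\text{(approx.\ term)}+d_j^{-1}\seminorm{e}_{H_h^1(\Omega_j')}+\seminorm{e}_{H_h^{-2}(\Omega_j')} .
\]
The approximation terms are where the Hölder gain enters. Because \(\delta_z\) has support in \(T_z\), away from \(z\) we have
\[
D^2 g_\alpha(x)=\int D^2_xD^\alpha_\zeta G(x,\zeta)\delta_z(\zeta)\,d\zeta ,
\]
so \eqref{holder estimate2} gives \(\seminorm{g_\alpha}_{C^{2,\sigma}(\Omega_j'')}\lesssim d_j^{-2-\sigma}\). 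Using \eqref{Holder approximation of interpolation} with \(t=\sigma\), the term \(d_j\cdot d_j\cdot h^\sigma d_j^{-2-\sigma}=(h/d_j)^\sigma\) sums geometrically. The local \(H^1\) approximation term with \(k\ge2\) is handled the same way, and the vertex singularities are treated as in Lemma~\ref{lemma holder estimates}, which already incorporates them. For Morley, \(\seminorm{e}_{H_h^{-2}(\Omega_j')}\) is bounded with Lemma~\ref{lemma Morley consistency estimate}, taking \(D=\varnothing\) and \(p=\infty\): this gives \(h^\sigma d_j^{-2-\sigma}\) times \(\seminorm{w}_{W^{2,1}_h}\lesssim d_j\seminorm{w}_{H^2_h}\), together with the \(f\)-term, which vanishes since \(\delta_z=0\) there, up to the \(I_h^1\) correction. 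For conforming elements this term is zero.

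\textbf{Main obstacle.} The remaining term is \(\sum_j \seminorm{e}_{H_h^1(\Omega_j')}\), and it is the crux. First try a global \(H^1\) duality estimate: \(\seminorm{e}_{H_h^1(\Omega)}\lesssim h\seminorm{e}_{H^2_h}+\cdots\) via \(H^3\)-regularity \eqref{eq:W3p-negative-regularity}. This yields only \(\sum_j \seminorm{e}_{H^1(\Omega_j')}\lesssim |\ln h|^{1/2}\), which loses a logarithm. So instead perform a local duality on each annulus. Write
\[
\seminorm{e}_{H^1_h(\Omega_j')}=\sup_{\varphi}(De,\varphi),\qquad \varphi\in C_0^\infty(\Omega_j'),\ \norm{\varphi}_{L^2}=1,
\]
and let \(\psi\) solve \(\Delta^2\psi=-\operatorname{div}\varphi\). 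Galerkin orthogonality (plus the Morley consistency terms) gives \((De,\varphi)=a_h(e,\psi-\Pi_h\psi)\) up to consistency terms. Now split \(\Omega\) into the annuli again. On annuli far from \(\Omega_j'\) we have \(\psi\in C^{2,\sigma}\) with bounds from \eqref{holder estimate1} (this is exactly the use of \eqref{holder estimate1} advertised earlier), while near \(\Omega_j'\) we use \(\seminorm{\psi}_{H^3}\lesssim 1\). This yields a bound of the form
\[
d_j^{-1}\seminorm{e}_{H_h^1(\Omega_j')}\lesssim \frac{h}{d_j}\seminorm{e}_{H_h^2(\Omega_j^{\mathrm{near}})}+\sum_{i} \Bigl(\frac{h}{\max(d_i,d_j)}\Bigr)^{\sigma}(\cdots)\,\seminorm{e}_{W^{2,1}_h(\Omega_i)}d_i^{-1}\cdots .
\]
Multiplying by \(d_j\) and summing, the result is a kernel whose row and column sums are \(O(\kappa^{-\sigma})\) or \(O(1/\kappa)\). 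Therefore
\[
\seminorm{e}_{W^{2,1}_h}\le C+C\kappa^{-\sigma}\seminorm{e}_{W^{2,1}_h},
\]
and choosing \(\kappa\) large absorbs the last term, proving \eqref{g2-g2h}. Getting this summation kernel with a genuine positive Hölder power, rather than a logarithm, is the step I expect to be delicate.
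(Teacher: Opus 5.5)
Your plan is correct and is essentially the paper's proof. It uses the same dyadic annuli and near-field global energy bound, Corollary~\ref{corollary drop L2} on each annulus with \eqref{holder estimate2} to produce the summable factor $(h/d_j)^\sigma$, Lemma~\ref{lemma Morley consistency estimate} with $D=\varnothing$ for the negative seminorm, and a local duality argument per annulus ($H^3$-regularity near the annulus, \eqref{holder estimate1} away from it) for the $H^1$ term, followed by absorption for large $\kappa$. The paper's version of your delicate step is slightly simpler: it bounds the far-field part directly by $(h/d_j)^\sigma\seminorm{e}_{W_h^{2,1}(\Omega)}$ instead of building an annulus-to-annulus kernel, takes the dual datum elementwise compactly supported so the broken gradient integrates by parts without jump terms, and controls the $g_2$-consistency term near the pole via \eqref{g2 W31}, details your ``up to consistency terms'' leaves implicit.
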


\begin{proof}
We give the proof for the Morley element. The conforming case is simpler,
because the local consistency terms vanish. 
For simplicity of notation, we write \(g_2\) for  
\(g_\alpha\) when the $\alpha$ satisfying $|\alpha|=2$ is fixed. 

Set \(e:=g_2-g_{2,h}\) and let \(d_0:=\kappa h\), \(d_j:=2^j d_0\) for \(j=0,1,\ldots,J\), where \(J\) is chosen such that \(\operatorname{diam}(\Omega)\le d_J<2\operatorname{diam}(\Omega)\). 
We use the dyadic annuli
\begin{equation}\label{eq:pointwise-annuli}
    A_0:=\Omega\cap B(z,d_0),\qquad
    A_j:=\{x\in\Omega:\ d_{j-1}<|x-z|\le d_j\},\quad j\ge 1 .
\end{equation}
For \(j\ge1\), define the enlarged annuli
\begin{equation}\label{eq:pointwise-enlarged-annuli}
    A_j^*
    :=
    \left\{
    x\in\Omega:\ \frac23 d_{j-1}<|x-z|\le \frac32 d_j
    \right\},
\end{equation}
and let \(A_j^{**}\) be a slightly larger element patch satisfying
\[
    A_j\subset A_j^*\subset A_j^{**},\qquad
    \operatorname{diam}(A_j^{**})\simeq d_j,\qquad
    \operatorname{dist}(\partial A_{j^*}\backslash \partial\Omega,\partial A_j^{**}\backslash \partial\Omega)\simeq d_j .
\]
Moreover, we define the elementwise enlargement of $A_j$,
\[
\tilde{A}_j
:=
\operatorname{int}
\left(
\bigcup_{\substack{T\in\mathcal{T}_h,\, T\cap A_j\neq\emptyset}}
\overline{T}
\right).
\]
For notational simplicity, we continue to write $A_j$ for
$\widetilde{A}_j$. Likewise, $A_j^*$ and $A_j^{**}$ are understood as
their corresponding elementwise enlargements. Note that since $h/d_j \leq 1/\kappa$ with a sufficiently large $\kappa$, this convention will not essentially affect $d_j$.

If an annulus intersects the boundary, we cover it by a uniformly finite
number of element patches satisfying one of the two alternatives in
Corollary~\ref{corollary drop L2}.  To simplify notation, we write the
argument directly on \(A_j,A_j^*,A_j^{**}\).

By Cauchy's inequality on each annulus,
\begin{equation}\label{eq:W21-dyadic-decomposition}
\begin{aligned}
    \seminorm{e}_{W_h^{2,1}(\Omega)}
    &\lesssim
    d_0\seminorm{e}_{H_h^2(A_0)}
    +
    \sum_{j=1}^J d_j\seminorm{e}_{H_h^2(A_j)} .
\end{aligned}
\end{equation}
We estimate the first term by 
the global energy estimate (see (10) in \cite{Z.Shi1})
\begin{equation*} \seminorm{e}_{H_h^2(\Omega)}\lesssim h(\seminorm{g_2}_{H^3(\Omega)}+h\norm{D^2\delta_z}_{L^2(T_z)})  
\end{equation*}
Using \eqref{eq:delta-Lp-scaling} and $H^3$-regularity estimate  $\seminorm{g_2}_{H^3(\Omega)}
    \lesssim
    \norm{D^2\delta_z}_{H^{-1}(\Omega)}$,
we obtain
\begin{equation}\label{eq:E1-estimate}
\begin{aligned}
    d_0\seminorm{e}_{H_h^2(A_0)}
    &\lesssim
    \kappa h^2
    \left(
    \seminorm{g_2}_{H^3(\Omega)}
    +
    h\norm{D^2\delta_z}_{L^2(T_z)}
    \right)
    \lesssim 1 .
\end{aligned}
\end{equation}

We next estimate the annular contributions in
\eqref{eq:W21-dyadic-decomposition}.  Applying
Corollary~\ref{corollary drop L2} with
$U=A_j,\ G_0=A_j^*,\ G=A_j^{**},$
and using local interpolation for \(g_2\), we get
\begin{equation}\label{eq:local-annular-estimate}
\begin{aligned}
    d_j\seminorm{e}_{H_h^2(A_j)}
    \lesssim\;&
    h^\sigma d_j^2
    \seminorm{g_2}_{C^{2,\sigma}(A_j^{**})}
    +
    \seminorm{e}_{H_h^1(A_j^*)}
    +
    d_j\seminorm{e}_{H_h^{-2}(A_j^*)},
\end{aligned}
\end{equation}
where \(\sigma\in(0,1)\) is the exponent from
Lemma~\ref{lemma holder estimates} and the derivation of the term
\(h^\sigma d_j^2\seminorm{g_2}_{C^{2,\sigma}(A_j^{**})}\) involves interpolation approximation and the fact that $h\le d_j$.

Summing \eqref{eq:local-annular-estimate} over \(j\ge1\), and using
\eqref{eq:E1-estimate}, gives
\begin{equation}\label{eq:E-decomposition}
\begin{aligned}
 d_0\seminorm{e}_{H_h^2(A_0)}
    +
    \sum_{j=1}^J d_j\seminorm{e}_{H_h^2(A_j)}
    \lesssim\;&
    1
    +
    \underbrace{
    \sum_{j=1}^J
    h^\sigma d_j^2
    \seminorm{g_2}_{C^{2,\sigma}(A_j^{**})}
    }_{=:E_2}
    \\
    &+
    \underbrace{
    \sum_{j=1}^J
    \seminorm{e}_{H_h^1(A_j^*)}
    }_{=:E_3}
    +
    \underbrace{
    \sum_{j=1}^J
    d_j\seminorm{e}_{H_h^{-2}(A_j^*)}
    }_{=:E_4}.
\end{aligned}
\end{equation}

We first estimate \(E_2\).  Since \(T_z\subset A_0\) and
\(A_j^{**}\cap T_z=\varnothing\) for \(j\ge1\), the representation of \(g_2\)
by the Green's function and \eqref{holder estimate2} yield
\begin{equation}\label{C2sigma}
\begin{aligned}
    \seminorm{g_2}_{C^{2,\sigma}(A_j^{**})}
    &\lesssim
    \sup_{\substack{x,y\in A_j^{**}, \zeta\in T_z}}
    \frac{
    |D_x^2D_\zeta^2G(x,\zeta)
      -D_y^2D_\zeta^2G(y,\zeta)|
    }{|x-y|^\sigma}
    \norm{\delta_z}_{L^1(T_z)}
    \\
    &\lesssim
    \sup_{\substack{x,y\in A_j^{**}, \zeta\in T_z}}
    \left(
    |x-\zeta|^{-2-\sigma}
    +
    |y-\zeta|^{-2-\sigma}
    \right)
    \lesssim d_j^{-2-\sigma}.
\end{aligned}
\end{equation}
Consequently,
\begin{equation}\label{eq:E2-estimate}
    E_2
    \lesssim
    \sum_{j=1}^J
    \left(\frac{h}{d_j}\right)^\sigma
    \lesssim 1 .
\end{equation}

We now estimate \(E_3\).  Fix \(j\ge1\).  It is enough to estimate one first
derivative, say \(\partial_1 e\), on \(A_j^*\).  Owing to the discontinuity of $g_{2,h}$, we make some modifications to the standard duality argument. For each $ T\subset A_j^*$, let $v_T\in C_0^{\infty}(T)$ with $\norm{v_T}_{L^2(T)}=1$ satisfy 
   $\norm{\partial_1 e}_{L^2(T)}\leq 2(\partial_1 e,v_T)$.
Set 
$v:=\sum_{T\subset A_j^*} \frac{ \norm{\partial_1 e}_{L^2(T)}}{ \norm{\partial_1 e}_{L^2(A_j^*)}}v_T.$
Note that $v\in C_0^{\infty}(A_j^*)$ satisfies
$ \norm{v}_{L^2(A_j^*)}=1$ and vanishes on each edge contained in $A_j^*$.
Hence,
\begin{equation}\label{partial1e}
   \norm{\partial_1 e}_{L^2(A_j^*)} \leq 2(\partial_1 e,v)=2\sum_{T\subset A_j^*} \int_T v\partial_1 e \, {\rm d}x= -2(e,\partial_1 v).
\end{equation} 
Let \(\psi\in H_0^2(\Omega)\) solve the dual problem: $\Delta^2\psi=\partial_1 v$ in $\Omega$ with
    $\psi=\frac{\partial\psi}{\partial n}=0$
    on $\partial\Omega$.
Then
$\norm{\psi}_{H^3(\Omega)}\lesssim \norm{\partial_1 v}_{H^{-1}(\Omega
)}\lesssim \norm{v}_{L^2(A_j^*)}=1 .$
Moreover, by the Green's function representation of \(\psi\) and
\eqref{holder estimate1},
\begin{equation}\label{psiC2sigma}
\begin{aligned}
    \seminorm{\psi}_{C^{2,\sigma}(\Omega\setminus A_j^{**})}
    &\lesssim
    d_j^{-1-\sigma}\norm{v}_{L^1(A_j^*)}
    \lesssim
    d_j^{-1-\sigma}|A_j^*|^{1/2}\norm{v}_{L^2(A_j^*)}
    \lesssim d_j^{-\sigma}.
\end{aligned}
\end{equation}

Using the dual problem and \eqref{partial1e}, adding and subtracting \(a_h(e,\psi)\), and then using
the discrete equation for \(g_{2,h}\), we obtain
\begin{equation}\label{eq:E3-duality-decomposition}
\begin{aligned}
    \norm{\partial_1 e}_{L^2(A_j^*)}   
    &\lesssim\underbrace{\left| (\Delta^2 \psi,e)-a_h(\psi,e) \right|}_{:=E_{31}^{(j)}} 
    +\underbrace{\left| a_h(e,\psi-\Pi_h\psi) \right|}_{:=E_{32}^{(j)}}
    \\
    &\quad +\underbrace{\left| (\Delta^2 g_2,\psi-\Pi_h \psi)-a_h(g_2,\psi-\Pi_h \psi) \right|}_{:=E_{33}^{(j)}}
\end{aligned}
\end{equation}

\par We first estimate $E_{31}^{(j)}$.
Using \eqref{split-version Morley consistency estimate} with $D=A_j^{**}$ and $p=2$, the
\(H^3\)-bound for \(\psi\) on \(A_j^{**}\), and
\eqref{psiC2sigma} outside \(A_j^{**}\), we obtain
\begin{equation}\label{eq:E31-estimate}
\begin{aligned}
E_{31}^{(j)}
&\lesssim h\norm{v}_{L^2(A_j^{**})}
\seminorm{e}_{H_h^2(A_j^{**})}
 +h\seminorm{\psi}_{H^3(A_j^{**})}
\seminorm{e}_{H_h^2(A_j^{**})}
\\
&\quad
 +h^{\sigma}\seminorm{\psi}_{C^{2,\sigma}(\Omega\backslash A_j^{**})}
\seminorm{e}_{W_h^{2,1}(\Omega\backslash A_j^{**})}
\\
   & \lesssim   h\seminorm{e}_{H_h^2(A_j^{**})}
    +
    \left(\frac{h}{d_j}\right)^\sigma
    \seminorm{e}_{W_h^{2,1}(\Omega)} .    
\end{aligned}
\end{equation}
For \(E_{32}^{(j)}\), we use the
\(H^3\)-bound for \(\psi\) on \(A_j^{**}\), and
\eqref{psiC2sigma} outside \(A_j^{**}\), to get
\begin{equation}\label{eq:E32-estimate}
\begin{aligned}
E_{32}^{(j)} &\lesssim  h\seminorm{\psi}_{H^3(A_j^{**})}\seminorm{e}_{H_h^2(A_j^{**})}+h^{\sigma}\seminorm{\psi}_{C^{2,\sigma}(\Omega\backslash A_j^{**})}\seminorm{e}_{W_h^{2,1}(\Omega\backslash A_j^{**})}  \\
   & \lesssim   h\seminorm{e}_{H_h^2(A_j^{**})}
    +
    \left(\frac{h}{d_j}\right)^\sigma
    \seminorm{e}_{W_h^{2,1}(\Omega)} .    
\end{aligned}   
\end{equation}
Finally, we estimate \(E_{33}^{(j)}\). Notice that $T_z\subset \Omega\backslash A_j^{**}$.  Choosing  $D=\Omega\backslash A_j^{**}$ and $p=1$ in \eqref{split-version Morley consistency estimate} and \eqref{g2 W31}, together with
\eqref{psiC2sigma}, gives
\begin{equation}\label{eq:E33-estimate}
\begin{aligned}
    E_{33}^{(j)}&\lesssim h^{2+\sigma}\norm{D^2\delta_z}_{L^1(T_z)}\seminorm{\psi}_{C^{2,\sigma}(T_z)}+ h^{1+\sigma}\seminorm{g_2}_{W^{3,1}(\Omega\backslash A_j^{**})}\seminorm{\psi}_{C^{2,\sigma}(\Omega\backslash A_j^{**})}\\
    &\quad + h^{1+\sigma}d_j\seminorm{g_2}_{C^{2,\sigma}(A_j^{**})}\seminorm{\psi}_{H^3(A_j^{**})} \\
    &\lesssim \left(\frac{h}{d_j}\right)^\sigma+\left(\frac{h}{d_j}\right)^{1+\sigma}\lesssim  \left(\frac{h}{d_j}\right)^\sigma.
\end{aligned}
\end{equation}
Combining \eqref{eq:E3-duality-decomposition}--\eqref{eq:E33-estimate}, and
summing over the two first derivatives, gives
\begin{equation}\label{eq:local-H1-error-annulus}
    \seminorm{e}_{H_h^1(A_j^*)}
    \lesssim
    h\seminorm{e}_{H_h^2(A_j^{**})}
    +
    \left(\frac{h}{d_j}\right)^\sigma
    \seminorm{e}_{W_h^{2,1}(\Omega)}
    +
    \left(\frac{h}{d_j}\right)^\sigma .
\end{equation}
Therefore,
\begin{equation}\label{eq:E3-estimate}
\begin{aligned}
    E_3
    &\lesssim
    \sum_{j=1}^J
    h\seminorm{e}_{H_h^2(A_j^{**})}
    +
    \left(
    \sum_{j=1}^J
    \left(\frac{h}{d_j}\right)^\sigma
    \right)
    \seminorm{e}_{W_h^{2,1}(\Omega)}
    +
    1 .
\end{aligned}
\end{equation}

It remains to estimate \(E_4\).  By the definition
\eqref{eq:local-negative-seminorm}, for any
\(w_h\in\mathring{V}_h(A_j^*)\) with
\(\seminorm{w_h}_{H_h^2(A_j^*)}=1\),
\[
    a_h(e,w_h)
    =
    a_h(g_2,w_h)-(D^{2}\delta_z,w_h).
\]
Since \(A_j^*\cap T_z=\varnothing\) for \(j\ge1\), \eqref{split-version Morley consistency estimate} with $D=\varnothing$ gives
\[
    |a_h(e,w_h)|
    \lesssim
    h^\sigma d_j
    \seminorm{g_2}_{C^{2,\sigma}(A_j^{**})}
    \lesssim
    h^\sigma d_j^{-1-\sigma}.
\]
Multiplying by \(d_j\) and taking the supremum over \(w_h\) yields
\begin{equation}\label{eq:E4-estimate}
    E_4
    \lesssim
    \sum_{j=1}^J
    \left(\frac{h}{d_j}\right)^\sigma
    \lesssim 1 .
\end{equation}

Combining \eqref{eq:E-decomposition}, \eqref{eq:E2-estimate},
\eqref{eq:E3-estimate}, and \eqref{eq:E4-estimate}, we arrive at
\begin{align}\label{eq:g2-final-absorption}
    &d_0\seminorm{e}_{H_h^2(A_0)}
    +
    \sum_{j=1}^J d_j\seminorm{e}_{H_h^2(A_j)} \notag\\
    &\lesssim 
    1
    +
    \sum_{j=1}^J
    h\seminorm{e}_{H_h^2(A_j^{**})}
    +
    \left(
    \sum_{j=1}^J
    \left(\frac{h}{d_j}\right)^\sigma
    \right)
    \seminorm{e}_{W_h^{2,1}(\Omega)} \\
    &\lesssim 
    1
    +
    \sum_{j=1}^J
    h\seminorm{e}_{H_h^2(A_j^{**})}
    +
    \left(
    \sum_{j=1}^J
    \left(\frac{h}{d_j}\right)^\sigma
    \right) \Big(d_0\seminorm{e}_{H_h^2(A_0)}
    +
    \sum_{j=1}^J d_j\seminorm{e}_{H_h^2(A_j)}\Big) . \notag
\end{align}
Since \(d_j\ge \kappa h\), it follows that 
\(
    h\seminorm{e}_{H_h^2(A_j^{**})}
    =
    \frac{h}{d_j}
    d_j\seminorm{e}_{H_h^2(A_j^{**})}
    \lesssim \kappa^{-1} d_j\seminorm{e}_{H_h^2(A_j^{**})} .
\)
The enlarged annuli \(A_j^{**}\) have uniformly finite overlap, and therefore
\[
    \sum_{j=1}^J
    h\seminorm{e}_{H_h^2(A_j^{**})}
    \lesssim
    \kappa^{-1} \Big(d_0\seminorm{e}_{H_h^2(A_0)}
    +
    \sum_{j=1}^J d_j\seminorm{e}_{H_h^2(A_j)}\Big).
\]
Also, 
\(
    \sum_{j=1}^J
    \left(\frac{h}{d_j}\right)^\sigma
    \lesssim \kappa^{-\sigma}.
\)
Choosing \(\kappa\) sufficiently large, the last two terms in
\eqref{eq:g2-final-absorption} are absorbed into the left-hand side.  This
proves \eqref{g2-g2h}.
\end{proof}

\begin{corollary}\label{corollary W2p}
Define $\Gamma_z(x)
    :=
    \left(|x-z|^2+\kappa^2 h^2\right)^{1/2}.$
Under the assumptions in Theorem~\ref{theorem g-gh}, it holds that
\begin{equation}\label{eq:weighted-g2-error}
    \sup_{z\in\overline{\Omega}} M_{\sigma,h}^z
    \lesssim h^{\sigma/2},
\end{equation}
where \(\sigma\) is the exponent in
Lemma~\ref{lemma holder estimates} and 
     $M_{\sigma,h}^z
     :=
     \left(
     \int_{\Omega}
     \Gamma_z^{2+\sigma}
     |D^2(g_2^z-g_{2,h}^z)|^2\,{\rm d}x
     \right)^{1/2}$.
\end{corollary}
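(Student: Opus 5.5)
The plan is to reuse the dyadic decomposition from the proof of Theorem~\ref{theorem g-gh}, but now track the weight $\Gamma_z^{2+\sigma}$ and the $L^2$-norms on annuli, rather than the $W^{2,1}$ sum. With $A_0,A_j,A_j^*,A_j^{**}$ as in \eqref{eq:pointwise-annuli}--\eqref{eq:pointwise-enlarged-annuli}, we have $\Gamma_z\simeq d_j$ on $A_j$ for $j\ge0$. Hence
\[
    (M_{\sigma,h}^z)^2\lesssim \sum_{j=0}^J d_j^{2+\sigma}\seminorm{e}_{H_h^2(A_j)}^2,
    \qquad e:=g_2-g_{2,h}.
\]
For $j=0$, the global energy estimate used in \eqref{eq:E1-estimate} gives $\seminorm{e}_{H_h^2(\Omega)}\lesssim h(\seminorm{g_2}_{H^3}+h\norm{D^2\delta_z}_{L^2})\lesssim h\cdot h^{-2}=h^{-1}$. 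Therefore $d_0^{2+\sigma}\seminorm{e}_{H_h^2(A_0)}^2\lesssim h^{2+\sigma}h^{-2}=h^\sigma$, which is the target size.

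For $j\ge1$, I would apply the annular estimate \eqref{eq:local-annular-estimate}, divided by $d_j$:
\[
    \seminorm{e}_{H_h^2(A_j)}\lesssim h^\sigma d_j\seminorm{g_2}_{C^{2,\sigma}(A_j^{**})}+d_j^{-1}\seminorm{e}_{H_h^1(A_j^*)}+\seminorm{e}_{H_h^{-2}(A_j^*)}.
\]
By \eqref{C2sigma} and the bound used for $E_4$, the first and third terms are $\lesssim h^\sigma d_j^{-1-\sigma}$. For the middle term I would use \eqref{eq:local-H1-error-annulus} together with Theorem~\ref{theorem g-gh}, which bounds $\seminorm{e}_{W_h^{2,1}(\Omega)}\lesssim1$. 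This gives $\seminorm{e}_{H_h^1(A_j^*)}\lesssim h\seminorm{e}_{H_h^2(A_j^{**})}+(h/d_j)^\sigma$. Collecting terms,
\[
    \seminorm{e}_{H_h^2(A_j)}\lesssim h^\sigma d_j^{-1-\sigma}+\frac{h}{d_j}\seminorm{e}_{H_h^2(A_j^{**})}.
\]
Multiplying by $d_j^{1+\sigma/2}$ and squaring, the main term contributes $h^{2\sigma}d_j^{-\sigma}$. Summed over the geometric sequence $d_j\ge\kappa h$, this gives $\lesssim h^{2\sigma}h^{-\sigma}=h^\sigma$. The remaining term is $(h/d_j)^2 d_j^{2+\sigma}\seminorm{e}_{H_h^2(A_j^{**})}^2\lesssim\kappa^{-2}d_j^{2+\sigma}\seminorm{e}_{H_h^2(A_j^{**})}^2$. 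Since the $A_j^{**}$ have finite overlap and $\Gamma_z\simeq d_j$ on $A_j^{**}$, the sum of these terms is bounded by $C\kappa^{-2}(M^z_{\sigma,h})^2$. It is absorbed into the left-hand side for $\kappa$ large, which yields $(M^z_{\sigma,h})^2\lesssim h^\sigma$, uniformly in $z$.

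The main obstacle is the interplay of this absorption with the boundary patches and element enlargements. Each $A_j^{**}$ meets only neighbouring annuli (including possibly $A_0$), and the weights are comparable across neighbours, so finite overlap suffices. A second point to check is that \eqref{eq:local-H1-error-annulus} was derived with $\seminorm{e}_{W^{2,1}_h(\Omega)}$ appearing linearly, so substituting Theorem~\ref{theorem g-gh} is legitimate and introduces no circularity. If finite overlap fails near corners, I would fall back on covering each boundary annulus by finitely many patches as in the proof of Theorem~\ref{theorem g-gh}.
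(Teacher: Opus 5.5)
Your proposal is correct and follows the paper's proof essentially step for step. The paper also bounds the $j=0$ term by the global energy estimate, bounds each annulus $j\ge1$ via \eqref{eq:local-annular-estimate} combined with \eqref{C2sigma}, \eqref{eq:local-H1-error-annulus}, \eqref{eq:E4-estimate} and Theorem~\ref{theorem g-gh}, and absorbs the $\kappa^{-2}$-small term $\sum_j h^2 d_j^{\sigma}\seminorm{e}_{H_h^2(A_j^{**})}^2$ using finite overlap; the only cosmetic difference is that you insert $\seminorm{e}_{W_h^{2,1}(\Omega)}\lesssim 1$ before squaring rather than after.
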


\begin{proof}
We still use $e$ to denote $ g_2-g_{2,h}$.   On \(A_j\) ($j\geq 0$), we have
\(\Gamma_z(x)\simeq d_j\). 
For $j=0$, we have by a global energy estimate
\begin{equation*}
\int_{A_0}
    \Gamma_z^{2+\sigma}
    |D^2e|^2\,{\rm d}x  \lesssim h^{2+\sigma}\norm{D^2e}_{L^2(\Omega)}^2\lesssim h^{\sigma}.  
\end{equation*}
For $j\geq 1$, the squared version of the annular estimate
\eqref{eq:local-annular-estimate}, together with
\eqref{C2sigma}, \eqref{eq:local-H1-error-annulus}, and
\eqref{eq:E4-estimate}, gives
\begin{equation}\label{eq:weighted-annular-bound}
\begin{aligned}
    \int_{A_j}
    \Gamma_z^{2+\sigma}
    |D^2e|^2\,{\rm d}x
    &\lesssim
    h^\sigma
    \left(\frac{h}{d_j}\right)^\sigma
    +
    h^2d_j^\sigma
    \seminorm{e}_{H_h^2(A_j^{**})}^2
    +
    h^\sigma
    \left(\frac{h}{d_j}\right)^\sigma
    \seminorm{e}_{W_h^{2,1}(\Omega)}^2 .
\end{aligned}
\end{equation}
By Theorem~\ref{theorem g-gh}, the last term is bounded by
\(h^\sigma(h/d_j)^\sigma\).  Summing \eqref{eq:weighted-annular-bound} over
\(j\), using finite overlap, and absorbing the term
\[
    \sum_j h^2d_j^\sigma    \seminorm{e}_{H_h^2(A_j^{**})}^2
    \le
    C\kappa^{-2}
    \sum_{j} d_j^{2+\sigma}
    \seminorm{e}_{H_h^2(A_j^{**})}^2
\]
into the left-hand side by taking \(\kappa\) sufficiently large, we obtain $(M_{\sigma,h}^z)^2\lesssim h^\sigma$ .
This proves \eqref{eq:weighted-g2-error}.
\end{proof}

\begin{lemma}\label{lemma weighted D3}
We have the weighted-norm estimate
\begin{equation}\label{eq:weighted-D3-g2}
    \int_{\Omega}
    \Gamma_z^{2+\sigma}|D^3g_2|^2\,{\rm d}x
    \lesssim h^{\sigma-2},
\end{equation}
where \(\sigma\) is the exponent in
Lemma~\ref{lemma holder estimates}.
\end{lemma}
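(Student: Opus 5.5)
We split the integral with the same dyadic decomposition as in the proof of Theorem~\ref{theorem g-gh}: $A_0=\Omega\cap B(z,d_0)$ and the annuli $A_j$, $j\ge1$. On $A_j$ we have $\Gamma_z\simeq d_j$ for every $j\ge 0$, so
\[
    \int_\Omega \Gamma_z^{2+\sigma}|D^3g_2|^2\,{\rm d}x
    \lesssim
    \sum_{j=0}^J d_j^{2+\sigma}\,\seminorm{g_2}_{H^3(A_j)}^2 .
\]
The task is to show that the near field contributes $O(h^{\sigma-2})$, and that the far-field sum contributes the same amount with a geometrically convergent series.

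\textbf{Near field.} For $j=0$ we use the global $H^3$-regularity \eqref{eq:W3p-negative-regularity} together with \eqref{eq:delta-Lp-scaling}:
\[
    \seminorm{g_2}_{H^3(\Omega)}\lesssim \norm{D^2\delta_z}_{H^{-1}(\Omega)}\lesssim \norm{D\delta_z}_{L^2(T_z)}\lesssim h^{-2}.
\]
Hence
\[
    d_0^{2+\sigma}\seminorm{g_2}_{H^3(A_0)}^2\lesssim h^{2+\sigma}h^{-4}=h^{\sigma-2}.
\]
The same bound handles any annulus with $d_j\lesssim h$. There are only boundedly many of these, because $d_0=\kappa h$ with $\kappa$ fixed.

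\textbf{Far field.} For $j\ge1$ we need a pointwise bound for $D^3g_2$ on $A_j$. We write
\[
    g_2(x)=\int_{T_z}D_\zeta^2G(x,\zeta)\,\delta_z(\zeta)\,{\rm d}\zeta,
\]
so that $D^3g_2(x)=\int D_x^3D_\zeta^2G\,\delta_z$. The estimates \eqref{pointwise1}--\eqref{pointwise3} with $|\alpha|=3$, $|\beta|=2$ give a bound by $|x-z|^{-3}$ when $\rho_\omega(x)\simeq\rho_\omega(\zeta)$. When $x$ is much closer to a vertex than $\zeta$ is, they instead give $\rho_\omega(x)^{\gamma-3}\rho_\omega(\zeta)^{-\gamma}$ with $\gamma=2+\sigma$. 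This is the same case analysis as in the proof of Lemma~\ref{lemma holder estimates}, and it yields
\[
    |D^3g_2(x)|\lesssim |x-z|^{-3}+\sum_{\omega\in\mathcal V}\rho_\omega(x)^{\sigma-1}|x-z|^{-2-\sigma}
\]
(the table \eqref{eq:far-field-kernel-orders} records the cruder form with $|x-\omega|^{-3}$). The first term contributes
\[
    d_j^{2+\sigma}\int_{A_j}|x-z|^{-6}\,{\rm d}x\lesssim d_j^{\sigma-4},
\]
and summing over $j$ gives a geometric series dominated by $d_0^{\sigma-4}\lesssim h^{\sigma-4}$. This is too crude, so the weight must be used more carefully. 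The correct count is
\[
    d_j^{2+\sigma}\cdot d_j^{-6}\cdot d_j^{2}=d_j^{\sigma-2}.
\]
Summing over $j\ge1$ then gives $\sum_j d_j^{\sigma-2}\lesssim h^{\sigma-2}$, since $\sigma<2$ and the series is geometric and dominated by its first term.

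\textbf{Main obstacle: vertex terms.} The contributions near vertices far from $z$ are the hardest part. For $x$ near a vertex $\omega$, with $\rho_\omega(x)\ll|x-z|\simeq d_j$, the bound above gives
\[
    |D^3g_2|^2\lesssim \rho_\omega^{2\sigma-2}d_j^{-4-2\sigma}.
\]
The function $\rho_\omega^{2\sigma-2}$ is integrable in two dimensions, and the integral over $A_j$ is $\lesssim d_j^{2\sigma}$. Thus this contribution is bounded by
\[
    d_j^{2+\sigma}d_j^{-4-2\sigma}d_j^{2\sigma}=d_j^{\sigma-2},
\]
which is again summable to $h^{\sigma-2}$. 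If the sharper $\gamma=2+\sigma$ bound cannot be applied uniformly, for instance when $z$ itself is near a vertex, the fallback is to separate the region $\Omega_0$ around each vertex as in Lemma~\ref{lemma a priori estimates for gi}. There we would bound the $H^3$ norm by global regularity times a small measure, using a local $W^{3,q}$ estimate with $q>2$ and Hölder's inequality. Outside that region the kernel bound would be applied with $\rho_\omega\gtrsim h$, checking that the resulting series still sums to $O(h^{\sigma-2})$.
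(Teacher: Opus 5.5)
Your proof is correct and follows the paper's argument. The near field is controlled by the global bound $\seminorm{g_2}_{H^3(\Omega)}\lesssim h^{-2}$, and the far field by the Maz'ya--Rossmann estimates with $\gamma=2+\sigma$, giving $|x-z|^{-3}$ plus the vertex term $\rho_\omega^{\sigma-1}$; you sum these over dyadic annuli where the paper integrates directly in polar coordinates. (The intermediate $d_j^{\sigma-4}$ is only an arithmetic slip, not a crude bound, and the closing fallback is unnecessary, because your case analysis already gives the pointwise bound uniformly in $z$, including $z$ near a vertex.)
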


\begin{proof}
We split the proof into two parts: the near-singular estimate and the far-singular estimate.
The near-singular contribution is estimated
by $H^3$-regularity:
\[
    \int_{\Omega\cap B(z,4d_0)}
    \Gamma_z^{2+\sigma}|D^3g_2|^2\,{\rm d}x
    \lesssim
    h^{2+\sigma}\seminorm{g_2}_{H^3(\Omega)}^2
    \lesssim
    h^{2+\sigma}h^{-4}
    =
    h^{\sigma-2}.
\]

It remains to consider \(\Omega\setminus B(z,4d_0)\).  Let \(\omega\) be a vertex of
\(\Omega\), and set \(\rho_\omega(x)=|x-\omega|\). For simplicity, we only consider the corner singularity associated with a single vertex.   

\par The pointwise Green's function
estimates \eqref{pointwise1}--\eqref{pointwise3}, with
\(\gamma=2+\sigma\) in \eqref{pointwise2}, imply the following two bounds:
away from the vertices,
\[
    |D^3g_2(x)|\lesssim |x-z|^{-3},
\]
while in the region where \(\rho_\omega(x)\le |z-\omega|/2\),
\[
    |D^3g_2(x)|
    \lesssim
    \rho_\omega(x)^{\sigma-1}
    |z-\omega|^{-2-\sigma}.
\]
Therefore,
\[
\begin{aligned}
    \int_{\Omega\setminus B(z,4d_0)}
    \Gamma_z^{2+\sigma}|D^3g_2|^2\,{\rm d}x
    &\lesssim
    \int_{4d_0}^{\operatorname{diam}(\Omega)}
    r^{\sigma-3} \,{\rm d}r
    \\
    &\quad+
    \sum_{|z-\omega|>2d_0}\int_{0}^{|z-\omega|}
    |z-\omega|^{-2-\sigma}
    \rho^{2\sigma-1}
    \,{\rm d}\rho\\
    &\lesssim
    h^{\sigma-2}.
\end{aligned}
\]
Together with the near-field estimate, this proves
\eqref{eq:weighted-D3-g2}.
\end{proof}

We now prove the two main theorems.

\medskip
\noindent{\bf Proof of Theorem~\ref{theorem conforming main results}.}
We first prove the \(W^{2,p}\)-stability estimate
\eqref{conforming W2,p}.  This part uses only the basic assumption \(\mu\ge2\). We still denote by $e:=g_2^z-g_{2,h}^z$.

For a fixed \(z\in\overline{\Omega}\), the definitions of \(g_2^z\) and \(g_{2,h}^z\), and Galerkin orthogonality give
\begin{equation}\label{eq:conforming-W2-representation}
    D^2u_h(z)
    =
    (D^2u_h,\delta_z)
    =
    (D^2u,\delta_z)
    -
    a(u,e).
\end{equation}
For \(p=\infty\), \eqref{eq:conforming-W2-representation} and
Theorem~\ref{theorem g-gh} directly imply
\[
    |D^2u_h(z)|
    \lesssim
    \seminorm{u}_{W^{2,\infty}(\Omega)}
    \left(
    \norm{\delta_z}_{L^1(T_z)}
    +
    \seminorm{e}_{W^{2,1}(\Omega)}
    \right)
    \lesssim
    \seminorm{u}_{W^{2,\infty}(\Omega)}.
\]
Taking the supremum over \(z\) proves \eqref{conforming W2,p} for
\(p=\infty\).

For \(2\le p<\infty\), we combine \eqref{eq:weighted-g2-error} with the standard weighted-norm argument of
Rannacher and Scott \cite{MR645661}. 
Applying the H{\"o}lder inequality to \eqref{eq:conforming-W2-representation}, we have
\begin{align} \label{weighted argument 1}
\begin{aligned}
\left| a(u,e) \right|
&\lesssim
\left( \int_{\Omega} \Gamma_z^{-2-\sigma} |D^2 u|^p \,\mathrm{d}x \right)^{\frac1p}
\left( \int_{\Omega} \Gamma_z^{-2-\sigma} \,\mathrm{d}x \right)^{\frac{p-2}{2p}}
\left( \int_{\Omega} \Gamma_z^{2+\sigma} |D^2 e|^2 \,\mathrm{d}x \right)^{\frac12}
\\
&\lesssim
\left(\sigma^{-1}h^{-\sigma}\right)^{\frac{p-2}{2p}} M_{\sigma,h}^z
\left( \int_{\Omega} \Gamma_z^{-2-\sigma} |D^2 u|^p \,\mathrm{d}x \right)^{\frac1p}.
\end{aligned}
\end{align}
Moreover, by \eqref{eq:delta-Lp-scaling}, 
\begin{equation}\label{weighted argument 2}
|(D^2u,\delta_z)|\lesssim h^{-\frac2p} \left(\int_{T_z}|D^2 u|^p \,\mathrm{d}x \right)^{\frac1p}.
\end{equation}
By combining \eqref{weighted argument 1} and \eqref{weighted argument 2}, integrating with respect to $z$, interchanging integration and using Corollary~\ref{corollary W2p}, we obtain
\begin{equation}
\begin{aligned}
\norm{D^2u_h}_{L^p(\Omega)}&\lesssim \left( h^{-2}\int_{\Omega}\int_{T_z} |D^2u|^p \,\mathrm{d}x\,\mathrm{d}z   \right)^{\frac1p}\\
&\quad +   \left(\sigma^{-1}h^{-\sigma}\right)^{\frac{p-2}{2p}} \sup_{z\in\overline{\Omega}}M_{\sigma,h}^z \left( \int_{\Omega}\int_{\Omega} \Gamma_z^{-2-\sigma} |D^2 u|^p \,\mathrm{d}x \,\mathrm{d}z \right)^{\frac1p}\\
&\lesssim \norm{D^2u}_{L^p(\Omega)}.
\end{aligned}
\end{equation}
This proves \eqref{conforming W2,p} for all \(2\le p\le\infty\).

We next prove the lower-order pointwise estimates.  Let \(i=0\) or \(i=1\),
and let \(g_i=g_i^z\) be the regularized Green's function from
\eqref{definition of regularized Green's functions}. Using the Galerkin orthogonality and the fact $\Pi_h g_i\in \mathring{V}_h$, we obtain for any $\chi_h\in  \mathring{V}_h$
\[
\begin{aligned}
    |D^i(u-u_h)(z)|
    &\le
    |D^i(u-\chi_h)(z)|
    +
    |D^i(\chi_h-u_h)(z)|
    \\
    &\lesssim
    \seminorm{u-\chi_h}_{W^{i,\infty}(\Omega)}
    +
    \seminorm{u-u_h}_{W^{2,\infty}(\Omega)}
    \seminorm{g_i-\Pi_hg_i}_{W^{2,1}(\Omega)} .
\end{aligned}
\]
Taking the infimum over \(\chi_h\in\mathring{V}_h\), together with the Galerkin orthogonality and the $W^{2,\infty}$-stability of $u_h$, gives
\begin{equation}\label{eq:conforming-lower-order-master}
\begin{aligned}
    \seminorm{u-u_h}_{W^{i,\infty}(\Omega)}
    \lesssim\;&
    \inf_{\chi_h\in\mathring{V}_h}
    \seminorm{u-\chi_h}_{W^{i,\infty}(\Omega)}
    \\
    &+
    \seminorm{g_i-\Pi_hg_i}_{W^{2,1}(\Omega)}
    \inf_{\chi_h\in\mathring{V}_h}
    \seminorm{u-\chi_h}_{W^{2,\infty}(\Omega)} .
\end{aligned}
\end{equation}

Assume now that \(\mu\ge3\).  By the interpolation estimate
\eqref{approximation of interpolation}, \eqref{g1 W41}, and \eqref{g0 W41},
\[
    \seminorm{g_1-\Pi_hg_1}_{W^{2,1}(\Omega)}
    \lesssim
    h^2\seminorm{g_1}_{W^{4,1}(\Omega)}
    \lesssim h,
\]
and
\[
    \seminorm{g_0-\Pi_hg_0}_{W^{2,1}(\Omega)}
    \lesssim
    h^2\seminorm{g_0}_{W^{4,1}(\Omega)}
    \lesssim h^2|\ln h|.
\]
Substituting these two estimates into
\eqref{eq:conforming-lower-order-master} proves
\eqref{conforming W1,infty} and \eqref{conforming Linfty-log}.

If \(\mu\ge4\), the logarithm in the \(L^\infty\)-estimate can be removed.
Indeed, using the near-field/far-field decomposition
\eqref{eq:near-singular-region} and the pointwise Green's function estimates
\eqref{pointwise1}--\eqref{pointwise3}, we have
\[
    |D^5g_0(x)|
    \lesssim
    |x-z|^{-3}
    +
    \sum_{\omega\in\mathcal{V}} |x-\omega|^{-3},
    \qquad x\in\Omega\setminus\Omega_0 .
\]
Therefore,
\[
\begin{aligned}
    \seminorm{g_0-\Pi_hg_0}_{W^{2,1}(\Omega)}
    &\lesssim
    h^2\seminorm{g_0}_{W^{4,1}(\Omega_0)}
    +
    h^3\seminorm{g_0}_{W^{5,1}(\Omega\setminus\Omega_0)}
    \\
    &\lesssim
    h^2
    +
    h^3
    \int_{\kappa h}^{\operatorname{diam}(\Omega)}
    r^{-3}r\,{\rm d}r
    \lesssim h^2 .
\end{aligned}
\]
Using this sharper bound in
\eqref{eq:conforming-lower-order-master} with \(i=0\) proves
\eqref{conforming Linfty}.\hfill\qedsymbol

\medskip
\noindent{\bf Proof of Theorem~\ref{theorem nonconforming main results}.}
We first prove the \(W^{2,p}\)-estimate. Let \(\chi_h\in\mathring{V}_h\).
In view of the definitions of $g_2$, $g_{2,h}$, $u$ and $u_h$, straightforward algebraic manipulations lead to
\begin{equation}\label{eq:nonconforming-D2-representation}
\begin{aligned}
    D^2(\chi_h-u_h)(z)
    &=
    (\chi_h-u_h,D^2\delta_z)
    =
    a_h(\chi_h-u_h,g_{2,h})
    \\
    &=
    a_h(u-\chi_h,g_2-g_{2,h})
    +
    \left[
    (\Delta^2g_2,u-\chi_h)
    -
    a_h(g_2,u-\chi_h)
    \right]
    \\
    &\quad
    -
    (\Delta^2g_2,u-\chi_h)
    +
    \left[
    (\Delta^2u,g_2-g_{2,h})
    -
    a_h(u,g_2-g_{2,h})
    \right].
\end{aligned}
\end{equation}
We choose \(\chi_h=\Pi_hu\).  The first term on the right-hand side of
\eqref{eq:nonconforming-D2-representation} is estimated by
Corollary~\ref{corollary W2p}; the two bracketed terms are estimated by combining Morley consistency estimates \eqref{split-version Morley consistency estimate} and the weighted-norm argument used in the proof of Theorem~\ref{theorem conforming main results}. This gives, for \(2\le p<\infty\),
\begin{equation}\label{eq:nonconforming-W2-intermediate}
\begin{aligned}
    \seminorm{\Pi_hu-u_h}_{W^{2,p}(\Omega)}
    \lesssim
    \seminorm{u-\Pi_hu}_{W^{2,p}(\Omega)}+h \seminorm{u}_{W^{3,p}(\Omega)}+h^2
    \norm{f}_{L^p(\Omega)} .
\end{aligned}
\end{equation}
For clarity of exposition, a detailed proof of \eqref{eq:nonconforming-W2-intermediate} is deferred to Appendix~\ref{supp:Morley}. 
Using the Morley interpolation estimate $\seminorm{u-\Pi_hu}_{W^{2,p}(\Omega)}
    \lesssim
    h\seminorm{u}_{W^{3,p}(\Omega)}$,
we obtain
\[
    \seminorm{\Pi_hu-u_h}_{W^{2,p}(\Omega)}
    \lesssim
    h\seminorm{u}_{W^{3,p}(\Omega)}
    +
    h^2\norm{f}_{L^p(\Omega)} 
\]
for $2\leq p<\infty$. For $p=\infty$, by \eqref{eq:nonconforming-D2-representation},  \eqref{split-version Morley consistency estimate}, \eqref{eq:delta-Lp-scaling} and \eqref{g2-g2h}, we have
\begin{equation*}
 \seminorm{\Pi_hu-u_h}_{W^{2,\infty}(\Omega)}
    \lesssim
    h\seminorm{u}_{W^{3,\infty}(\Omega)}
    +
    h^2\norm{f}_{L^{\infty}(\Omega)}.    
\end{equation*}
Thus, using the interpolation estimate for \(u-\Pi_hu\), we arrive at
\eqref{nonconforming W2,p}.

We next prove the \(W^{1,\infty}\)-estimate.  Let \(g_1=g_1^z\) be the
regularized first-derivative Green's function. Similarly as \eqref{eq:nonconforming-D2-representation},  straightforward algebraic manipulations yield 
\begin{equation}\label{prove non 1infty}
\begin{aligned}
    |D(u-u_h)(z)|
    \lesssim\;&
   \underbrace{ |D(u-\Pi_hu)(z)|}_{:=J_1}
    +
    \underbrace{ |(\Pi_hu-u,D\delta_z)|}_{:=J_2}+ \underbrace{ |a_h(u-u_h,g_1-\Pi_hg_1)|}_{:=J_3}
    \\
    &+
    \underbrace{ \left|
    (u-u_h,\Delta^2g_1)-a_h(u-u_h,g_1)
    \right|}_{:=J_4}
    \\
    &+
   \underbrace{ \left|
    a_h(u,g_1-\Pi_hg_1)
    -
    (f,g_1-\Pi_hg_1)
    \right|}_{:=J_5} .
\end{aligned}
\end{equation}
The first three terms are estimated using interpolation error bounds, while the last two are controlled by consistency error estimates. More precisely, by \eqref{nonconforming W2,p}, \eqref{g1 W31}, the approximation property of $\Pi_h$ and the scaling properties
\eqref{eq:regularized-delta-properties}, we have
\begin{align*}
J_i &\lesssim h^2|\ln h|(\seminorm{u}_{W^{3,\infty}(\Omega)}+h\norm{f}_{L^{\infty}(\Omega)})+h^3 \seminorm{u}_{W^{3,\infty}(\Omega)}\norm{D\delta_z}_{L^1(T_z)}\\
&\lesssim   h^2|\ln h|(\seminorm{u}_{W^{3,\infty}(\Omega)}+h\norm{f}_{L^{\infty}(\Omega)})
\end{align*}
for $i=1,2,3$. For $J_4$, we use \eqref{split-version Morley consistency estimate} with $p=1$ and $D=\Omega$, to get
\begin{equation*}
J_4\lesssim \left(h\seminorm{g_1}_{W^{3,1}(\Omega)}+h^2\norm{D\delta_z}_{L^1(T_z)} \right)\seminorm{u-u_h}_{W_h^{2,\infty}(\Omega)}, 
\end{equation*}
which, together with \eqref{nonconforming W2,p} and \eqref{g1 W31}, yields
\begin{equation*}
 J_4\lesssim  h^2|\ln h| \left(\seminorm{u}_{W^{3,\infty}(\Omega)}+h\norm{f}_{L^{\infty}(\Omega)}\right). 
\end{equation*}
The estimate for $J_5$ follows from the similar argument for estimating $J_4$,
\begin{align*}
 J_5 &\lesssim  \left(h\seminorm{u}_{W^{3,\infty}(\Omega)}+h^2\norm{f}_{L^{\infty}(\Omega)} \right)\seminorm{g_1-\Pi_h g_1}_{W^{2,1}(\Omega)}\\
 &\lesssim h^2|\ln h|\left(\seminorm{u}_{W^{3,\infty}(\Omega)}+h\norm{f}_{L^{\infty}(\Omega)} \right).
\end{align*}
Combining above estimates for $J_i$ ($1\leq i\leq 5$) and taking the supremum over \(z\in\overline{\Omega}\) lead to \eqref{nonconforming W1,infty}.

Finally, we prove the \(L^\infty\)-estimate.  By the same
representation and consistency argument as above, for  any fixed $2< p<\infty$ and
\(p'\) satisfying \(1/p+1/p'=1\),
\[
\begin{aligned}
    |(u-u_h)(z)|
    \lesssim\;&
    \norm{u-\Pi_hu}_{L^{\infty}(T_z)}+
    \left(
    h\seminorm{g_0}_{W^{3,p'}(\Omega)}
    +
    h^2\norm{\delta_z}_{L^{p'}(T_z)}\right)\seminorm{u-u_h}_{W^{2,p}(\Omega)}   
    \\
    &+
    h\seminorm{u}_{W^{3,p}(\Omega)}\seminorm{g_0-\Pi_h g_0}_{W_h^{2,p'}(\Omega)}
    +
    h^2\norm{f}_{L^{p}(\Omega)}\seminorm{g_0-\Pi_h g_0}_{W_h^{2,p'}(\Omega)}.
\end{aligned}
\]
The regularity estimate for the
regularized Green's function gives (see \cite{MR775683})
\[
    \seminorm{g_0}_{W^{3,q}(\Omega)}
    \lesssim
    \norm{\delta_z}_{W^{-1,q}(\Omega)}
    \lesssim 1  \quad  \forall 1<q<2.
\]
Hence, by the Sobolev embedding $W^{3,p}(\Omega)\hookrightarrow W^{2,\infty}(\Omega)$, 
\[
    |(u-u_h)(z)|
    \lesssim
    h^2
    \left(
    \seminorm{u}_{W^{3,p}(\Omega)}
    +
    h\norm{f}_{L^p(\Omega)}
    \right)
\]
for $2<p<\infty$. The validity for $p=\infty$ follows from the Sobolev embedding. Taking the supremum over \(z\) proves \eqref{nonconforming Linfty}.\hfill\qedsymbol


\section{Conclusion}\label{conclusion}
Sharp pointwise error estimates for finite element approximations of the clamped biharmonic equation are obtained on convex polygonal domains. More precisely, for conforming elements, we establish $W^{2,p}$-stability for all $2 \le p \le \infty $, together with best-approximation
type estimates in $W^{1,\infty}$- and $L^{\infty}$-norms. For the nonconforming Morley elements, we give optimal
estimates in $W^{2,p}$-, $W^{1,\infty}$-, and $L^{\infty}$-norms. Such sharp pointwise error estimates remain open for polyhedra and nonconvex polygons. An extension of the present results to three-dimensional polyhedral
domains is not straightforward. The analysis relies essentially on the
H{\"o}lder estimates for mixed derivatives of the clamped biharmonic
Green's function, together with the a priori estimates for regularized Green's functions.
In three dimensions, the edge and
vertex singularities of polyhedral domains make the corresponding
Green's function and global regularity estimates substantially more
delicate. On the discrete side, standard three-dimensional
low-order nonconforming elements do not automatically satisfy the
vertex-continuity and conforming-companion hypotheses in
Assumption~\ref{assumption}. Consequently, the present argument does not extend directly to three dimensions.

The sharpness of the remaining logarithmic factors in \eqref{conforming Linfty-log} and \eqref{nonconforming W1,infty} is also unresolved. For second-order elliptic
problems discretized by linear finite elements, the lower bounds show that the logarithmic factor in maximum-norm error estimates cannot in general be removed; see \cite{MR847015} for two-dimensional domains and \cite{MR3925475} for three-dimensional domains. Whether the logarithmic factors in
\eqref{conforming Linfty-log} for $\mu=3$ and \eqref{nonconforming W1,infty} are likewise unavoidable
remains open.

\appendix

\section{Verification of Assumption~\ref{assumption}}
\label{supp:assumption}
In this section, we provide further clarification of Assumption~\ref{assumption}.

\subsection{Choice of the interpolation operator}

For some conforming finite elements, including the Argyris, Bell, and Bogner--\allowbreak Fox--\allowbreak Schmit elements, the degrees of freedom involve second-order derivatives. Consequently, the associated canonical interpolation operators do not, in general, satisfy Assumption~(A1). We therefore employ averaging-based quasi-interpolation operators instead. More precisely, for the Argyris, Bell, and Bogner--Fox--Schmit elements, one may
use the boundary-preserving Hermite-type quasi-interpolation
operator constructed by Girault and Scott~\cite{MR1898745}.
This operator is a projection onto the corresponding globally
$C^1$ finite element space, preserves the polynomial traces required
by the homogeneous clamped boundary conditions, and possesses
optimal local approximation properties. Thus, Assumption~(A1) is satisfied for these elements. 
Earlier boundary-preserving
quasi-interpolation constructions specifically for the Bell and
bicubic Bogner--Fox--Schmit elements can also be found
in~\cite{MR1388488}.

For elements involving derivatives of order at most one in their degrees of freedom, the canonical finite element interpolation operator can be used directly, owing to the two-dimensional embedding $W^{3,1}(T) \hookrightarrow C^1(\overline{T}).$
One may verify that these canonical finite element interpolation operators satisfy (A1).

\subsection{Verification of (A2)}
In this subsection, we  verify the superapproximation property (A2) for some types of finite elements. For conforming elements, it follows from the argument in \cite[Lemma 3.1]{MR4166452}. However, for certain nonconforming elements, such as the Morley element, $\Pi_h(\eta^4v_h)$ is not well defined. Consequently, the proof of this lemma cannot rely solely on the approximation property (A1). For simplicity, we verify Assumption~(A2) below only for the Morley element; the other elements can be treated analogously.

\medskip
\noindent{\bf Verification of (A2) for the triangular Morley element.}
Let \(V_h\) be the triangular Morley element space and let \(\Pi_h\) be the canonical finite element interpolation operator for the Morley element. Moreover, for each $T\in\mathcal{T}_h$, let $\Pi_T$ denote the elementwise
counterpart of $\Pi_h$. For every $v_h\in V_h$, we first prove
\begin{equation}\label{1 superapproximation}
      \seminorm{\eta^4v_h-\Pi_T(\eta^4v_h)}_{H^2(T)}
   \lesssim
   h d^{-1}
   \left(   \seminorm{\eta^2v_h}_{H^2(T)}
   +d^{-1}\seminorm{v_h}_{H^1(T)}
   +d^{-2}\norm{v_h}_{L^2(T)}
   \right).
\end{equation}
Using the interpolation estimate  and
the bounds on \(\eta\), we obtain
\begin{align}\label{super Morley interpolation}
\begin{aligned}
 \seminorm{\eta^4v_h-\Pi_T(\eta^4v_h)}_{H^2(T)}
 &=
 \seminorm{(\eta^4-P_T^0(\eta^4))v_h
 -\Pi_T((\eta^4-P_T^0(\eta^4))v_h)}_{H^2(T)}
 \\
 &\lesssim
 h
 \seminorm{(\eta^4-P_T^0(\eta^4))v_h}_{H^3(T)}
 \\
 &\lesssim
 h d^{-1}
 \left(
 d^{-2}\norm{v_h}_{L^2(T)}
 +d^{-1}\seminorm{v_h}_{H^1(T)}
 \right)
 \\
 &\quad
 +h d^{-1}\norm{\eta^2}_{L^\infty(T)}
\seminorm{v_h}_{H^2(T)}.
\end{aligned}
\end{align}
The last term is localized by inserting and subtracting
\(\norm{\eta^2}_{L^\infty(T)}v_h\).  This gives
\[
 \norm{\eta^2}_{L^\infty(T)}\seminorm{v_h}_{H^2(T)}
 \lesssim
 \seminorm{\eta^2v_h}_{H^2(T)}
 +d^{-1}\seminorm{v_h}_{H^1(T)}
 +d^{-2}\norm{v_h}_{L^2(T)},
\]
which leads to \eqref{1 superapproximation}, together with \eqref{super Morley interpolation}. 
Note that the derivation of estimate~\eqref{1 superapproximation} can be simplified by using the fact that $|v_h|_{H^3(T)}=0.$
However, for finite elements of piecewise polynomial degree greater than $\mu$,
the estimate should  be derived as in \eqref{super Morley interpolation}.

\par We now prove \eqref{assumption pi}. Let $v_h\in \mathring{V}_h$. Define $\tilde{v}_h\in P_2(\Omega,\mathcal{T}_h)$ by $\tilde{v}_h|_T:=\Pi_T(\eta^4v_h)$. It suffices to show that there exists $w_h\in \mathring{V}_h(U_2)$ such that
\begin{equation}\label{2 superapproximation}
    \seminorm{\tilde{v}_h-w_h}_{H_h^2(U_2)}
    \lesssim
    h\Bigl(
    d^{-1}\seminorm{\eta^2 v_h}_{H_h^2(U_2)}
    +d^{-2}\seminorm{v_h}_{H_h^1(U_2)}
    +d^{-3}\norm{v_h}_{L^2(U_2)}
    \Bigr).
\end{equation}
Since $\tilde{v}_h$ may fail to belong
to \(V_h\), we introduce a modified interpolation \(\widetilde{\Pi}_h\), in the
spirit of \cite{MR1229633}.  On each triangle \(T\), the function
\(\widetilde{\Pi}_h(\eta^4v_h)\) is defined by the following Morley degrees of
freedom: $\widetilde{\Pi}_h(\eta^4v_h)(A_i)
    =
    \eta^4(A_i)v_h(A_i),
    \ i=1,2,3,$
at the vertices \(A_i\) of \(T\), and
\[
    \frac{\partial \widetilde{\Pi}_h(\eta^4v_h)}{\partial n}(m_i)
    =
    \frac{\partial(\eta^4)}{\partial n}(m_i)\,I_h^1v_h(m_i)
    +
    \eta^4(m_i)\frac{\partial v_h}{\partial n}(m_i),
    \qquad i=1,2,3,
\]
at the midpoints \(m_i\) of the edges.  By construction,
\(\widetilde{\Pi}_h(\eta^4v_h)\in \mathring{V}_h(U_2)\). Take $w_h=\widetilde{\Pi}_h(\eta^4v_h)$. For any \(p_T\in P_2(T)\),
the norm equivalence for Morley degrees of freedom gives $\norm{p_T}_{L^\infty(T)}
    \lesssim
    \sum_{i=1}^3 |p_T(A_i)|
    +
    h_T\sum_{i=1}^3
    \left|\frac{\partial p_T}{\partial n}(m_i)\right|$.
Thus, by the definitions of \(\Pi_h\) and
\(\widetilde{\Pi}_h\), and standard inverse estimates, we obtain
\begin{align*}
    \norm{\widetilde{v}_h-w_h}_{L^\infty(T)}
    &
    \lesssim   h\,\norm{D\eta}_{L^\infty(T)}
    \norm{\eta^3(v_h-I_h^1v_h)}_{L^\infty(T)}
    \\
    &
    \lesssim
    h d^{-1}
    \Bigl(
    \norm{\eta^2v_h-I_h^1(\eta^2v_h)}_{L^\infty(T)}
    +
    \norm{I_h^1(\eta^2v_h)-\eta^2I_h^1v_h}_{L^\infty(T)}
    \Bigr)
    \\
    & \lesssim
    h^2d^{-1}\seminorm{\eta^2v_h}_{H^2(T)}
    +  h^2d^{-2}\seminorm{v_h}_{H^1(T)}
    +   h^2d^{-3}\norm{v_h}_{L^2(T)}.
\end{align*}
After scaling, applying inverse estimates and summing over \(T\subset U_2\), this proves (A2). Notice that if  \(U_2\cap\partial\Omega=\varnothing\), then $\widetilde{\Pi}_h(\eta^4v_h)\in \mathring{V}_h(U_2)$ for $v_h\in V_h$. Thus, \eqref{assumption pi} still holds for all $v_h\in V_h$.

\subsection{Verification of (A3)}
Assumption~(A3) is automatically valid by taking $v=v_h$ for conforming elements. Accordingly, this section is mainly devoted to verifying condition~(A3) for nonconforming finite elements.
For simplicity, we verify only for the triangular Morley element.

\medskip
\noindent{\bf Verification of (A3) for the triangular Morley element.} To verify (A3), let \(E_h\) be the averaging, or companion, operator from the
Morley space into the Hsieh--Clough--Tocher \(C^1\) macroelement space, as in
\cite{MR2670114}. For \(w:=v_h-E_hv_h\), the Morley degrees of freedom imply that,
on each \(T\),
\[
\norm{w}_{L^\infty(T)}
\lesssim
\sum_{i=1}^3 |w(A_i)|
+h_T\sum_{i=1}^3|\nabla w(A_i)|
+h_T\sum_{i=1}^3
\left|\frac{\partial w}{\partial n}(m_i)\right|
=
h_T\sum_{i=1}^3|\nabla w(A_i)|.
\]
Moreover, the normal and tangential derivatives of \(w\) have zeros on each
edge in the sense of the Morley degrees of freedom.  By inverse estimates and
standard scaling on element patches \(\omega_T\), one obtains
\[
    \seminorm{v_h-E_hv_h}_{H^j(T)}
    \lesssim
    h_T^{2-j}\seminorm{v_h}_{H_h^2(\omega_T)},
    \qquad j=0,1,2.
\]
Summing over all elements in \(U\) gives \eqref{assumption 4} with
\(v=E_hv_h\).

\section{\texorpdfstring{$W^{4,p}$}{W4,p}-regularity estimate}\label{supp:regularity}
In this section, we derive the $W^{4,p}$- regularity estimate \eqref{W4p regulatity}, based on \cite[Section 7.3]{MR775683}.

\noindent{\bf Proof of \eqref{W4p regulatity}.} According to \cite[Theorem~7.3.2.1, Remark 7.3.2.3]{MR775683}, it suffices to show that there exists a $p_0>1$ such that, for every $1<p<p_0$,
the following characteristic equation 
\begin{equation}\label{eq:characteristic equation}
\sinh^2{(\lambda \omega)}=\lambda^2 \sin^2{\omega} 
\end{equation}
has no root in the strip $-(1+2/p')\leq \operatorname{Im}\lambda<0$ (other than $-\mathrm{i}$), where $0<\omega<\pi$ is an arbitrary interior angle of $\Omega$ and $1/p+1/p'=1$. Once this regularity is established, the estimate follows from
\cite[Corollary 7.3.1.9]{MR775683}.

Define
\[
F_\omega(\lambda)
:=
\sinh^2(\omega\lambda)-\lambda^2\sin^2\omega .
\]
Since
\[
F_\omega(\lambda)
=
\bigl(\sinh(\omega\lambda)-\lambda\sin\omega\bigr)
\bigl(\sinh(\omega\lambda)+\lambda\sin\omega\bigr),
\]
every root satisfies
\[
\sinh(\omega\lambda)
=
\hat{\sigma}\lambda\sin\omega,
\qquad
\hat{\sigma}\in\{1,-1\}.
\]
Write
\[
\lambda=\xi-\mathrm{i}\hat{\xi},
\qquad \hat{\xi}>0.
\]
Choose $\varepsilon_0>0$ sufficiently small so that
\[
(1+\varepsilon_0)\omega<\pi.
\]
For $0<\hat{\xi}<1+\varepsilon_0$, comparison of the imaginary parts gives
\[
\cosh(\omega\xi)\sin(\hat{\xi}\omega)
=
\hat\sigma\hat{\xi}\sin\omega.
\]
Since $\sin(\hat{\xi}\omega)>0$ and $\sin\omega>0$, hence $\hat{\sigma}=1$ and 
\begin{equation}
\cosh(\omega\xi)\sin(\hat{\xi}\omega)
=
\hat{\xi}\sin\omega.
\label{eq:corner-eigenvalue-imaginary}
\end{equation}

If $0<\hat{\xi}<1$, the strict concavity of $\sin t$ on $(0,\pi)$ yields
\[
\sin(\hat{\xi}\omega)>\hat{\xi}\sin\omega.
\]
Together with $\cosh(\omega\xi)\ge1$, this contradicts
\eqref{eq:corner-eigenvalue-imaginary}. Thus no root exists for
$0<\hat{\xi}<1$.

If $\hat{\xi}=1$, \eqref{eq:corner-eigenvalue-imaginary} gives
\[
\cosh(\omega\xi)=1,
\]
and consequently $\xi=0$. Hence $\lambda=-\mathrm{i}$ is the only
root with $\hat{\xi}=1$.

It remains to exclude roots with $\hat{\xi}>1$ sufficiently close to $1$.
Suppose, to the contrary, that there exists a sequence of roots
\[
\lambda_n=\xi_n-\mathrm{i}\hat{\xi}_n,
\qquad
1<\hat{\xi}_n<1+\varepsilon_0,\qquad
\hat{\xi}_n\to1.
\]
By \eqref{eq:corner-eigenvalue-imaginary},
\[
\cosh(\omega\xi_n)
=
\frac{\hat{\xi}_n\sin\omega}{\sin(\hat{\xi}_n\omega)}
\longrightarrow1,
\]
and therefore $\xi_n\to0$. Hence
\[
\lambda_n\to-\mathrm{i}.
\]

Set
\[
H_\omega(\lambda)
=
\sinh(\omega\lambda)-\lambda\sin\omega.
\]
We have $H_\omega(-\mathrm{i})=0$, while
\[
H_\omega'(-\mathrm{i})
=
\omega\cos\omega-\sin\omega\neq0,
\]
because
\[
\sin\omega-\omega\cos\omega>0,
\qquad 0<\omega<\pi.
\]
Thus $-\mathrm{i}$ is a simple, and hence isolated, zero of
$H_\omega$, contradicting $\lambda_n\to-\mathrm{i}$ with
$\lambda_n\neq-\mathrm{i}$.

Therefore there exists $0<\delta_\omega<\varepsilon_0$ such that the stated strip $-(1+\delta_{\omega})<\operatorname{Im}\lambda<0$
contains no root other than $-\mathrm{i}$. Choosing $p_0$ satisfying $2/p'_0<\delta_{\omega}$ for every $\omega$ gives \eqref{W4p regulatity}. \hfill\qedsymbol

\section{Consistency error estimate for the Morley element}\label{supp:Morley}
The consistency error estimate for the Morley element can be found in \cite{Z.Shi1}. For completeness, and in view of the different form required in this paper, we provide the details of the proofs of \eqref{split-version Morley consistency estimate} and \eqref{eq:nonconforming-W2-intermediate} below.

\medskip
\noindent{\bf Proof of \eqref{split-version Morley consistency estimate}.}
We use the notations in Lemma~\ref{lemma Morley consistency estimate}.
Since $w$ is continuous at each vertex, the following identity holds (see (17) in \cite{Z.Shi1})
\begin{align}\label{consistency expression}
\begin{aligned}
a_h(u,w)-(f,w)&= (f,I_h^1w-w)+ \underbrace{ \sum_{T\in\mathcal{T}_h} \int_T \nabla\Delta u\cdot\nabla (I_h^1w-w)\,\mathrm{d}x }_{=:Q_2} \\
&\quad +\underbrace{ \sum_{T\in\mathcal{T}_h} \int_{\partial T} \left( \Delta u-\frac{\partial^2u}{\partial \tau^2} \right) \frac{\partial w}{\partial n}\,\mathrm{d}\ell }_{=:Q_3}
    +
\underbrace{ \sum_{T\in\mathcal{T}_h} \int_{\partial T} \frac{\partial^2u}{\partial \tau\,\partial n} \frac{\partial w}{\partial \tau}\,\mathrm{d }\ell }_{=:Q_4} ,     
\end{aligned}
\end{align}
where $\tau$ denotes the unit tangent vector.
We now estimate \(Q_i\), (\(i=2,3,4\)), in turn. 

\par For $Q_2$, since $\nabla \Delta I_h^2u$ vanishes piecewise, by using  trace inequalities and  integrating by parts, one may deduce
\begin{align*}
|Q_2|&=|\sum_{T\in\mathcal{T}_h}\int_T \nabla\Delta (u-I_h^2u)\cdot\nabla (I_h^1w-w) \,{\rm d}x|\\  
&\lesssim h\seminorm{u}_{W^{3,p}(D\cap \Omega_w)}\seminorm{w}_{W_h^{2,p'}(D\cap \Omega_w)}+h^{\sigma}\seminorm{u}_{C^{2,\sigma}(\Omega_w\backslash D)}\seminorm{w}_{W^{2,1}_h(\Omega_w\backslash D)}.
\end{align*}
Estimates for $Q_3$ and $Q_4$ follow from the standard argument for the nonconforming element. More precisely, for $i=3,4$, we have
\begin{equation*}
    |Q_i|\lesssim h\seminorm{u}_{W^{3,p}(D\cap \Omega_w)}\seminorm{w}_{W_h^{2,p'}(D\cap \Omega_w)}+h^{\sigma}\seminorm{u}_{C^{2,\sigma}(\Omega_w\backslash D)}\seminorm{w}_{W^{2,1}_h(\Omega_w\backslash D)}.
\end{equation*}
Combining the above estimates for \(Q_i\), ($2\leq i\leq 4$), yields the desired result \eqref{split-version Morley consistency estimate}. \hfill\qedsymbol

\medskip
\noindent{\bf Proof of \eqref{eq:nonconforming-W2-intermediate}.} 
In view of \eqref{eq:nonconforming-D2-representation}, we have
\begin{align*}
\begin{aligned}
    |D^2(\Pi_hu-u_h)(z)|
    &\leq 
    \underbrace{\left|a_h(u-\Pi_hu,g_2-g_{2,h})\right|}_{:=K_1}
    +
    \underbrace{\left|
    (\Delta^2g_2,u-\Pi_hu)
    -
    a_h(g_2,u-\Pi_hu)
    \right|}_{:=K_2}
    \\
    &\quad    
    +\underbrace{|(\Delta^2g_2,u-\Pi_hu)|}_{K_3}
    +
    \underbrace{\left|
    (\Delta^2u,g_2-g_{2,h})
    -
    a_h(u,g_2-g_{2,h})
    \right|}_{:=K_4}.
\end{aligned}   
\end{align*}
We estimate $K_i$ ($1\le i\le 4$) by combining the consistency error estimate and the weighted-norm argument used in the proof of Theorem~\ref{theorem conforming main results}. We still use $e$ to denote $g_2-g_{2,h}$.

We first estimate $K_1$. By the H{\"o}lder inequality, we obtain
\begin{align}\label{estimate for K1}
\begin{aligned}
K_1
&\lesssim
\left( \int_{\Omega} \Gamma_z^{-2-\sigma} |D^2 (u-\Pi_hu)|^p
\,\mathrm{d}x \right)^{\frac1p}
\left( \int_{\Omega} \Gamma_z^{-2-\sigma} \,\mathrm{d}x
\right)^{\frac{p-2}{2p}}
\left( \int_{\Omega} \Gamma_z^{2+\sigma} |D^2 e|^2
\,\mathrm{d}x \right)^{\frac12}
\\
&\lesssim
\left(\sigma^{-1}h^{-\sigma}\right)^{(p-2)/(2p)} M_{\sigma,h}^z
\left( \int_{\Omega} \Gamma_z^{-2-\sigma} |D^2 (u-\Pi_hu)|^p
\,\mathrm{d}x \right)^{1/p}.
\end{aligned}
\end{align}
Similarly,
\begin{equation}\label{estimate for K2}
\begin{aligned}
K_3
\lesssim\;&
\left(\sigma^{-1}h^{-\sigma}\right)^{\frac{p-2}{2p}}
\left( \int_{T_z} \Gamma_z^{2+\sigma} \delta_z^2
\,\mathrm{d}x \right)^{\frac12}
\left( \int_{\Omega} \Gamma_z^{-2-\sigma} |D^2 (u-\Pi_hu)|^p
\,\mathrm{d}x \right)^{\frac1p},
\end{aligned}
\end{equation}
where we use the fact $(\Delta^2 g_2,u-\Pi_hu)=(\delta, D^2(u-\Pi_hu))$. 

For $K_2$, use the split as in  \eqref{consistency expression}, apply weighted-norm argument to the four terms as in \eqref{estimate for K1} and combine with the trace inequality and the interpolation approximation,  which yields
\begin{align}\label{estimate for K3}
\begin{aligned}
K_2&\lesssim  \left(\sigma^{-1}h^{-\sigma}\right)^{\frac{p-2}{2p}}  \left( \int_{\Omega} \Gamma_z^{-2-\sigma} |D^2 (u-\Pi_hu)|^p \,\mathrm{d}x \right)^{\frac1p} \\
&\quad \times \left[\left( \int_{T_z} \Gamma_z^{2+\sigma} \delta_z^2 \,\mathrm{d}x \right)^{\frac12}+h\left( \int_{\Omega} \Gamma_z^{2+\sigma} |D^3g_2|^2 \,\mathrm{d}x \right)^{\frac12} \right]. 
\end{aligned}
\end{align}
Similarly, for $K_4$, we have
\begin{align}\label{estimate for K4}
\begin{aligned}
K_4 &\lesssim \left(\sigma^{-1}h^{-\sigma}\right)^{\frac{p-2}{2p}} (h^2M_{\sigma,h}^z) \left( \int_{\Omega} \Gamma_z^{-2-\sigma} |f|^p \,\mathrm{d}x \right)^{\frac1p}\\
&\quad + \left(\sigma^{-1}h^{-\sigma}\right)^{\frac{p-2}{2p}} hM_{\sigma,h}^z \left( \int_{\Omega} \Gamma_z^{-2-\sigma} |D^3 u|^p \,\mathrm{d}x \right)^{\frac1p},
\end{aligned}    
\end{align}
where the trace inequality and the interpolation approximation are used.

Finally, collecting \eqref{estimate for K1}---\eqref{estimate for K4}, integrating with respect to $z$, interchanging the order of integration, and using Corollary~\ref{corollary W2p} and Lemma~\ref{lemma weighted D3}, gives \eqref{eq:nonconforming-W2-intermediate}. \hfill\qedsymbol

\bibliographystyle{plain}
\bibliography{reference}

@article{NitscheSchatz1974,
  author  = {Nitsche, J. A. and Schatz, A. H.},
  title   = {Interior estimates for {R}itz--{G}alerkin methods},
  journal = {Math. Comp.},
  volume  = {28},
  number  = {128},
  pages   = {937--958},
  year    = {1974},
  issn    = {0025-5718}
}

@article{BrambleNitscheSchatz1975,
  author  = {Bramble, J. H. and Nitsche, J. A. and Schatz, A. H.},
  title   = {Maximum-norm interior estimates for {R}itz--{G}alerkin methods},
  journal = {Math. Comp.},
  volume  = {29},
  number  = {131},
  pages   = {677--688},
  year    = {1975},
  issn    = {0025-5718}
}

@article{SchatzWahlbin1977,
  author   = {Schatz, A. H. and Wahlbin, L. B.},
  title    = {Interior maximum norm estimates for finite element methods},
  journal  = {Math. Comp.},
  volume   = {31},
  number   = {138},
  pages    = {414--442},
  year     = {1977},
  issn     = {0025-5718},
  mrnumber = {0431753},
  doi      = {10.1090/S0025-5718-1977-0431753-X}
}

@article{SchatzWahlbin1978,
  author   = {Schatz, A. H. and Wahlbin, L. B.},
  title    = {Maximum norm estimates in the finite element method on plane polygonal domains. {I}},
  journal  = {Math. Comp.},
  volume   = {32},
  number   = {141},
  pages    = {73--109},
  year     = {1978},
  issn     = {0025-5718},
  mrnumber = {0502065},
  doi      = {10.1090/S0025-5718-1978-0502065-1}
}

@article{Scott,
  author   = {Scott, L. Ridgway},
  title    = {Optimal {$L^{\infty}$} estimates for the finite element method on irregular meshes},
  journal  = {Math. Comp.},
  volume   = {30},
  number   = {136},
  pages    = {681--697},
  year     = {1976},
  issn     = {0025-5718},
  mrnumber = {0436617},
  doi      = {10.1090/S0025-5718-1976-0436617-2}
}

@article {MR2728969,
    AUTHOR = {Demlow, A. and Guzm\'{a}n, J. and Schatz, A. H.},
     TITLE = {Local energy estimates for the finite element method on
              sharply varying grids},
   JOURNAL = {Math. Comp.},
    VOLUME = {80},
      YEAR = {2011},
    NUMBER = {273},
     PAGES = {1--9},
      ISSN = {0025-5718},
   MRCLASS = {65N30 (65N15)},
  MRNUMBER = {2728969},
MRREVIEWER = {Christian Wieners},
       DOI = {10.1090/S0025-5718-2010-02353-1},
}

@article {MR1229633,
    AUTHOR = {Wang, M.},
     TITLE = {{$L^\infty$} error estimates of nonconforming finite elements
              for the biharmonic equation},
   JOURNAL = {J. Comput. Math.},
    VOLUME = {11},
      YEAR = {1993},
    NUMBER = {3},
     PAGES = {276--288},
      ISSN = {0254-9409},
   MRCLASS = {65N30},
  MRNUMBER = {1229633},
}

@article{Z.Shi1,
author = {Shi, Z.},
title = {ON THE ERROR ESTIMATES OF {M}ORLEY ELEMENT},
publisher = {Mathematica Numerica Sinica},
year = {1990},
journal = {Math. Numer. Sin.},
volume = {12},
number = {2},
pages = {113-118},
doi = {10.12286/jssx.1990.2.113},
}

@article {MR4166452,
    AUTHOR = {Leykekhman, D.},
     TITLE = {Pointwise error estimates for {$C^0$} interior penalty
              approximation of biharmonic problems},
   JOURNAL = {Math. Comp.},
    VOLUME = {90},
      YEAR = {2021},
    NUMBER = {327},
     PAGES = {41--63},
      ISSN = {0025-5718},
   MRCLASS = {65N30 (65N15)},
  MRNUMBER = {4166452},
       DOI = {10.1090/mcom/3596},
}

@article {MR2670114,
    AUTHOR = {Brenner, S. C. and Gudi, T. and Sung, L.-Y.},
     TITLE = {An a posteriori error estimator for a quadratic
              {$C^0$}-interior penalty method for the biharmonic problem},
   JOURNAL = {IMA J. Numer. Anal.},
    VOLUME = {30},
      YEAR = {2010},
    NUMBER = {3},
     PAGES = {777--798},
      ISSN = {0272-4979},
   MRCLASS = {65N30 (35J40 65N15)},
  MRNUMBER = {2670114},
MRREVIEWER = {Jeffrey S. Ovall},
       DOI = {10.1093/imanum/drn057},
}

@article {MR1143406,
    AUTHOR = {Maz'ya, V. G. and Rossmann,
              J.},
     TITLE = {On the {A}gmon-{M}iranda maximum principle for solutions of
              elliptic equations in polyhedral and polygonal domains},
   JOURNAL = {Ann. Global Anal. Geom.},
    VOLUME = {9},
      YEAR = {1991},
    NUMBER = {3},
     PAGES = {253--303},
      ISSN = {0232-704X},
   MRCLASS = {35B50 (35J40)},
  MRNUMBER = {1143406},
MRREVIEWER = {Cornelius\ O.\ Horgan},
       DOI = {10.1007/BF00136815},
}

@article {MR2988473,
    AUTHOR = {Hu, J. and Shi, Z.},
     TITLE = {The best {$L^2$} norm error estimate of lower order finite
              element methods for the fourth order problem},
   JOURNAL = {J. Comput. Math.},
    VOLUME = {30},
      YEAR = {2012},
    NUMBER = {5},
     PAGES = {449--460},
      ISSN = {0254-9409,1991-7139},
   MRCLASS = {65N30 (65N15)},
  MRNUMBER = {2988473},
       DOI = {10.4208/jcm.1203-m3855},
}

@article {MR1388486,
    AUTHOR = {Schatz, A. H. and Sloan, I. H. and Wahlbin, L. B.},
     TITLE = {Superconvergence in finite element methods and meshes that are
              locally symmetric with respect to a point},
   JOURNAL = {SIAM J. Numer. Anal.},
    VOLUME = {33},
      YEAR = {1996},
    NUMBER = {2},
     PAGES = {505--521},
      ISSN = {0036-1429},
   MRCLASS = {65N30 (65N12)},
  MRNUMBER = {1388486},
       DOI = {10.1137/0733027},
}

@article {MR4435940,
    AUTHOR = {Li, B.},
     TITLE = {Maximum-norm stability of the finite element method for the
              {N}eumann problem in nonconvex polygons with locally refined
              mesh},
   JOURNAL = {Math. Comp.},
    VOLUME = {91},
      YEAR = {2022},
    NUMBER = {336},
     PAGES = {1533--1585},
      ISSN = {0025-5718},
   MRCLASS = {65N30 (65N12)},
  MRNUMBER = {4435940},
MRREVIEWER = {Zhifeng Weng},
       DOI = {10.1090/mcom/3724},
}

@article {MR555385,
    AUTHOR = {Rannacher, R.},
     TITLE = {On nonconforming and mixed finite element methods for plate
              bending problems. {T}he linear case},
   JOURNAL = {RAIRO Anal. Num\'{e}r.},
    VOLUME = {13},
      YEAR = {1979},
    NUMBER = {4},
     PAGES = {369--387},
      ISSN = {0399-0516,0516-2777},
   MRCLASS = {65N30 (73K10)},
  MRNUMBER = {555385},
       DOI = {10.1051/m2an/1979130403691},
}

@book {MR775683,
    AUTHOR = {Grisvard, P.},
     TITLE = {Elliptic {P}roblems in {N}onsmooth {D}omains},
    SERIES = {Monographs and Studies in Mathematics},
    VOLUME = {24},
 PUBLISHER = {Pitman (Advanced Publishing Program), Boston, MA},
      YEAR = {1985},
     PAGES = {xiv+410},
      ISBN = {0-273-08647-2},
   MRCLASS = {35J25 (35-02)},
  MRNUMBER = {775683},
MRREVIEWER = {P.\ Szeptycki},
}

@article {MR645661,
    AUTHOR = {Rannacher, R. and Scott, R.},
     TITLE = {Some optimal error estimates for piecewise linear finite
              element approximations},
   JOURNAL = {Math. Comp.},
    VOLUME = {38},
      YEAR = {1982},
    NUMBER = {158},
     PAGES = {437--445},
      ISSN = {0025-5718,1088-6842},
   MRCLASS = {65N30},
  MRNUMBER = {645661},
       DOI = {10.2307/2007280},
}

@article {MR4050547,
    AUTHOR = {Apel, T. and Rogovs, S. and Pfefferer, J. and
              Winkler, M.},
     TITLE = {Maximum norm error estimates for {N}eumann boundary value
              problems on graded meshes},
   JOURNAL = {IMA J. Numer. Anal.},
    VOLUME = {40},
      YEAR = {2020},
    NUMBER = {1},
     PAGES = {474--497},
      ISSN = {0272-4979,1464-3642},
   MRCLASS = {65N15 (65N30)},
  MRNUMBER = {4050547},
       DOI = {10.1093/imanum/dry076},
}

@article {MR4748207,
    AUTHOR = {Diening, L. and Rolfes, J. and Salgado, A. J.},
     TITLE = {Pointwise gradient estimate of the {R}itz projection},
   JOURNAL = {SIAM J. Numer. Anal.},
  FJOURNAL = {SIAM Journal on Numerical Analysis},
    VOLUME = {62},
      YEAR = {2024},
    NUMBER = {3},
     PAGES = {1212--1225},
      ISSN = {0036-1429,1095-7170},
   MRCLASS = {65N30 (65N12 65N80)},
  MRNUMBER = {4748207},
       DOI = {10.1137/23M1571800},
}

@article {MR3498514,
    AUTHOR = {Leykekhman, D. and Vexler, B.},
     TITLE = {Pointwise best approximation results for {G}alerkin finite
              element solutions of parabolic problems},
   JOURNAL = {SIAM J. Numer. Anal.},
  FJOURNAL = {SIAM Journal on Numerical Analysis},
    VOLUME = {54},
      YEAR = {2016},
    NUMBER = {3},
     PAGES = {1365--1384},
      ISSN = {0036-1429,1095-7170},
   MRCLASS = {65M60 (65M15)},
  MRNUMBER = {3498514},
MRREVIEWER = {B\"{u}lent\ Karas\"{o}zen},
       DOI = {10.1137/15M103412X},
}

@article {MR1252774,
    AUTHOR = {Wang, M.},
     TITLE = {{$L^\infty$} convergence of conforming finite elements for the
              biharmonic equation},
   JOURNAL = {J. Comput. Math.},
  FJOURNAL = {Journal of Computational Mathematics},
    VOLUME = {11},
      YEAR = {1993},
    NUMBER = {4},
     PAGES = {373--384},
      ISSN = {0254-9409,1991-7139},
   MRCLASS = {65N12 (65N30)},
  MRNUMBER = {1252774},
MRREVIEWER = {Andris\ Buikis},
}

@article {MR614817,
    AUTHOR = {Maz'ya, V. G. and Plamenevski\u{\i}, B. A.},
     TITLE = {On the maximum principle for the biharmonic equation in a
              domain with conical points},
   JOURNAL = {Izv. Vyssh. Uchebn. Zaved. Mat.},
  FJOURNAL = {Izvestiya Vysshikh Uchebnykh Zavedeni\u{\i}. Matematika},
      YEAR = {1981},
    NUMBER = {2},
     PAGES = {52--59},
      ISSN = {0021-3446},
   MRCLASS = {35J30 (31B30 35B50)},
  MRNUMBER = {614817},
}

@article {MR1898745,
    AUTHOR = {Girault, V. and Scott, L. R.},
     TITLE = {Hermite interpolation of nonsmooth functions preserving
              boundary conditions},
   JOURNAL = {Math. Comp.},
  FJOURNAL = {Mathematics of Computation},
    VOLUME = {71},
      YEAR = {2002},
    NUMBER = {239},
     PAGES = {1043--1074},
      ISSN = {0025-5718,1088-6842},
   MRCLASS = {65D05 (65N15 65N30)},
  MRNUMBER = {1898745},
       DOI = {10.1090/S0025-5718-02-01446-1},
}

@article {MR1388488,
    AUTHOR = {Zhang, X.},
     TITLE = {Two-level {S}chwarz methods for the biharmonic problem
              discretized conforming {$C^1$} elements},
   JOURNAL = {SIAM J. Numer. Anal.},
  FJOURNAL = {SIAM Journal on Numerical Analysis},
    VOLUME = {33},
      YEAR = {1996},
    NUMBER = {2},
     PAGES = {555--570},
      ISSN = {0036-1429},
   MRCLASS = {65N30 (65F10 65N55)},
  MRNUMBER = {1388488},
       DOI = {10.1137/0733029},
}

@article {MR3925475,
    AUTHOR = {Kopteva, Natalia},
     TITLE = {Logarithm cannot be removed in maximum norm error estimates
              for linear finite elements in 3{D}},
   JOURNAL = {Math. Comp.},
  FJOURNAL = {Mathematics of Computation},
    VOLUME = {88},
      YEAR = {2019},
    NUMBER = {318},
     PAGES = {1527--1532},
      ISSN = {0025-5718,1088-6842},
   MRCLASS = {65N30 (65N15)},
  MRNUMBER = {3925475},
MRREVIEWER = {Liang\ Ge},
       DOI = {10.1090/mcom/3384},
}

@article {MR847015,
    AUTHOR = {Blum, H. and Lin, Q. and Rannacher, R.},
     TITLE = {Asymptotic error expansion and {R}ichardson extrapolation for
              linear finite elements},
   JOURNAL = {Numer. Math.},
  FJOURNAL = {Numerische Mathematik},
    VOLUME = {49},
      YEAR = {1986},
    NUMBER = {1},
     PAGES = {11--37},
      ISSN = {0029-599X,0945-3245},
   MRCLASS = {65N30 (65B05)},
  MRNUMBER = {847015},
MRREVIEWER = {Manfred\ Dobrowolski},
       DOI = {10.1007/BF01389427},
}

\end{document}